\newcommand{\sub}{\operatorname{Sub}}
\newcommand{\st}{\Sigma _2}
\newcommand{\Cal}{\mathcal}
\newcommand{\re}{\operatorname{Re}}
\newcommand{\im}{\operatorname{Im}}
\newcommand{\w}[1]{\langle #1\rangle }
\newcommand{\set}[1]{\left\{\,#1\,\right\}}
\newcommand{\br}{\mathbf R}
\newcommand{\bc}{\mathbf C}
\newcommand{\bn}{\mathbf N}
\newcommand{\codim}{\operatorname{Codim}}
\newcommand{\wf}{\operatorname{WF}}
\newcommand{\restr}[1]{\big|_{#1}}
\newcommand{\ls}{\lesssim}
\newcommand{\gs}{\gtrsim}
\newcommand{\supp}{\operatorname{\rm supp}}
\newcommand{\wt}{\widetilde}
\newcommand{\hess}{\operatorname{Hess}}
\newcommand{\mn}[1]{\Vert#1\Vert}
\newcommand{\ol}{\overline}
\begin{document}

\hsize=160  mm
\vsize= 240 mm 

\baselineskip 17 pt  % 16 w. abstract
\lineskip 2pt
\lineskiplimit 2pt

\title[Subprincipal]{Solvability of subprincipal type operators} 
\author[NILS DENCKER]{{\textsc Nils Dencker}}
\address{Centre for Mathematical Sciences, University of Lund, Box 118,
	SE-221 00 Lund, Sweden}
\email{dencker@maths.lth.se}

%\date{January 21, 2018} 	

\numberwithin{equation}{section}

\begin{comment}
\begin{abstract}
In this paper we consider the solvability of
pseudodifferential operators in the case when the principal symbol vanishes of
order $k \ge 2 $ at a nonradial involutive manifold~$\st$.  We
shall assume that the operator is of subprincipal type, which means that the $ k$:th inhomogeneous blowup  at $\st$ of the refined principal symbol is of principal type with Hamilton vector field parallel to the base~$\st$, but
transversal to the symplectic leaves of $\st$ at the characteristics. When $k = \infty $ this blowup reduces to
the subprincipal symbol.
We also assume that the blowup is essentially constant on the leaves
of~$\st$, and does not satisfying the Nirenberg-Treves condition~(${\Psi}$). 
We also have conditions on the vanishing of the normal gradient and the Hessian of the
blowup at the characteristics. Under these conditions, we show that $P$ is not solvable.
\end{abstract} 
\end{comment}

\maketitle

\section{Introduction}

We shall consider the solvability for a classical pseudodifferential
operator $P \in  
{\Psi}^m_{\mathrm{cl}}(X)$ on a $C^\infty$ manifold $X$ of dimension $ n$. This means that $P$
has an expansion $p_m + p_{m-1} + \dots$ where $p_j \in S^{j}_{\mathrm{hom}}$
is homogeneous of 
degree $j$, $\forall\, j$, and $p_m = {\sigma}(P)$ is the principal
symbol of the operator. A pseudodifferential operator is said to be 
of principal type if the 
Hamilton vector field $H_{p_m}$ of the principal symbol does not
have the radial direction ${\xi}\cdot\partial_{\xi}$ on $p_m^{-1}(0)$,
in particular $H_{p_m} \ne 0$. We shall consider the case when the principal symbol
vanishes of at least second order at an involutive manifold~$\st$, thus~$P$
is not of principal type. 

$P$ is locally solvable at a compact set $K \subseteq X$
if the equation 
\begin{equation}\label{locsolv}
Pu = v 
\end{equation}
has a local solution $u \in \Cal D'(X)$ in a neighborhood of $K$
for any $v\in C^\infty(X)$
in a set of finite codimension.  
We can also define microlocal solvability of~$P$ at any compactly based cone
$K \subset T^*X$, see Definition~\ref{microsolv}.

For pseudodifferential operators of principal type,
local solvability is equivalent to condition (${\Psi}$) 
on the principal symbol, see~\cite{de:nt} and \cite{ho:nec}. This condition means that
\begin{multline}\label{psicond} \text{$\im ap_m$
    does not change sign from $-$ to $+$}\\
  \text{along the oriented
    bicharacteristics of $\re ap_m$}
\end{multline}
for any $0 \ne a \in C^\infty(T^*X)$. The oriented bicharacteristics are
the positive flow of the Hamilton vector field $H_{\re ap_m} \ne
0$ on which $\re ap_m =0$, these are also called
semibicharacteristics of~$p_m$.
Condition~\eqref{psicond} is invariant under multiplication of
~$p_m$ with nonvanishing factors, and symplectic changes of
variables, thus it is invariant under conjugation of~$P$ with elliptic
Fourier integral operators. Observe that the sign changes
in~\eqref{psicond} are reversed
when taking adjoints, and that it suffices to check~\eqref{psicond}
for some $a \ne 0$ for which $H_{\re ap} \ne 0$
according to~\cite[Theorem~26.4.12]{ho:yellow}.

For operators which are not of principal type, the situation is
more complicated and the solvability may depend on the lower order terms. 
Then the \emph{refined principal symbol} 
\begin{equation}\label{refpr}
p_{sub} = p_m + p_{m-1} + \frac{i}{2} \sum_j \partial_{\xi_j} \partial_{x_j} p_m
\end{equation} 
is invariantly defined modulo $S^{m-2}$ under changes of coordinates, see Theorem~18.1.33 in~\cite{ho:yellow}. 
In the Weyl quantization the refined principal symbol is given by $p_m + p_{m-1} $.

When~$\st$ is not involutive, there are examples where the
operator is solvable for any lower order terms. For 
example when~$P$ is effectively hyperbolic, then even the Cauchy problem
is solvable for any lower order term,
see~\cite{ho:cauchy}, \cite{Mel} and~\cite{Nishi}. 
There are also results in the cases when the principal symbol is a product of
principal type symbols not satisfying condition~(${\Psi}$),
see~\cite{CT},  \cite{GT},  \cite{Gold}, \cite{Treves} and~\cite{Yama1}.

In the case where the principal symbol is real and vanishes of at
least second order  at 
the involutive manifold there are several results, mostly in the case
when the principal symbol is a product of real symbols of principal
type. Then the operator is not solvable if the imaginary part of the
subprincipal symbol has 
a sign change of finite order on a bicharacteristic of one the factors of
the principal symbol, see~\cite{Ego}, \cite{Pop}, \cite{Wen1}
and~\cite{Wen2}.  

This necessary condition for solvability has been extended to some
cases when the principal symbol is real and vanishes of second order at the
involutive manifold. The conditions for solvability then involve the sign
changes of the imaginary part of the subprincipal symbol on the limits
of bicharacteristics 
from outside the manifold, thus on the leaves of the symplectic foliation of
the manifold, see~\cite{MU1}, \cite{MU2}, \cite{Men}
and~\cite{Yama2}. 

This has been extended to more general limit 
bicharacteristics of real principal symbols in~\cite{de:limit}. There we assumed that the bicharacteristics converge in $ C^\infty$ to a limit bicharacteristic. We also assumed that the linearization of the Hamilton vector field is tangent to and has uniform bounds on the tangent spaces of some Lagrangean manifolds at the bicharacteristics.  Then~$P$ is not solvable if condition Lim(${\Psi}$) is not satisfied on the limit bicharacteristics. This condition means that the quotient of the imaginary part of 
the subprincipal symbol with the norm of the Hamilton vector field
switches sign from $-$ to $+$ on the
bicharacteristics and becomes unbounded when converging to the limit
bicharacteristic. 
This was generalized in~\cite{de:compl} to operators with complex principal symbols.
There we assumed that the normalized complex Hamilton vector field of the principal symbol converges to a real
vector field. Then the limit bicharacteristics are uniquely defined, and one can invariantly define the imaginary part of the subprincipal symbol. Thus condition Lim(${\Psi}$) is well defined and we proved that it is necessary for solvability.

In~\cite{de:sub} we considered the case when the principal
symbol (not necessarily real valued) vanishes of at least second order at a
nonradial involutive manifold~$\st$. We assumed that the operator was of \emph{subprincipal type}, i.e., 
that the subprincipal symbol on $ \st $ is   
of principal type with Hamilton vector field tangent to~$\st$ at the
characteristics, but transversal to the symplectic leaves of~$\st$. Then
we showed that the operator is not solvable if the subprincipal 
symbol is essentially constant on the symplectic leaves of~$\st$ and does not satisfy condition (${\Psi}$), which we call Sub($ \Psi$). In the case when
the sign change is of infinite order, we also had conditions on the vanishing of both the Hessian of the principal symbol and the complex part of the gradient of the subprincipal 
symbol. 

The difference between \cite{de:compl} and \cite{de:sub} is that in the first case the Hamilton vector field of the principal symbol dominates, and in the second the Hamilton vector field of the subprincipal symbol dominates. 
In this paper, we shall study the case when condition~(${\Psi}$) is not satisfied for the refined principal symbol \eqref{refpr} which combines both the principal and subprincipal symbols. 
We shall assume that the principal symbol vanishes of at least order $k \ge 2$ on  at a
nonradial involutive manifold~$\st$.
When $k < \infty $ then the k:th jet of the principal symbol is well defined at $\st $, but since the refined principal symbol is inhomogeneous we make an inhomogeneous blowup, called \emph{reduced subprincipal symbol} by Definition~\ref{redsub}. We assume that the operator is of  \emph{subprincipal type}, i.e., the reduced subprincipal symbol is of principal type, see Definition~\ref{suprinctype}. We define condition $\sub_k(\Psi) $, which is condition~($ \Psi$) on the reduced subprincipal symbol, see Definition~\ref{subpsidef}. We assume that
the blowup of the refined principal symbol is essentially constant on the symplectic leaves of~$\st$, see~\eqref{cond1}.
We also have conditions on the rate of the vanishing of the normal gradient~\eqref{kcond}
and when $k = 2 $  of the Hessian of the reduced subprincipal symbol~\eqref{cond2}. 
When $ k= \infty$ all the Taylor terms vanish and condition $\sub_\infty (\Psi) $ reduces to condition $\sub (\Psi )$ on $\st $ from \cite{de:sub}.
Under these conditions, we show that if condition  $\sub_k(\Psi) $ is not satisfied near a bicharacteristic of the reduced subprincipal symbol then the operator is not solvable near the bicharacteristic, see
Theorem~\ref{mainthm} which is the main result of the paper. 
In the case when the sign change of  $\sub_k(\Psi) $ is on $\st $ we get a  different result than in~\cite{de:sub},  since now we localize the pseudomodes with the the phase function instead of the amplitude.

The plan of the paper is as follows. In Sect.~\ref{state} we make the definitions of the symbols we are going to use, state the conditions and the main result,  Theorem~\ref{mainthm}. In Sect.~\ref{exe} we present some examples, and in Sect.~\ref{normform} we develop normal forms of the operators, which are different in the case when the principal symbol vanishes of finite or infinite order at $ \st$. 
The approximate solutions, or pseudomodes, are defined in Sect.~\ref{modes}. In Sect.~\ref{eiksect} we solve the eikonal equation in the case when the principal symbol vanishes of finite order,  in Sect.~\ref{eta0} we solve it in the case when the bicharacteristics are on $\st $ and in Sect.~\ref{trans} we solve the transport equations. In order to solve the eikonal and transport equations uniformly we use the estimates of Lemma~\ref{lemclaim}, which is proved in Sect.~\ref{lempf}. Finally, Theorem~\ref{mainthm} is proved in Sect.~\ref{pfsect}.

\section{Statement of results}\label{state}

Let 
${\sigma}(P) = p  \in S^{m}_{\mathrm{hom}}$ be the homogeneous principal symbol of $ P$, we
shall assume that 
\begin{equation}
 {\sigma}(P) \ \text{vanishes of  at least second order at ${\Sigma}_2 \subset T^*X \setminus 0$}
\end{equation}
where
\begin{equation}
 {\Sigma}_2 \quad \text{is a nonradial involutive manifold of codimension $ d$}
\end{equation}
where $0 < d < n -1 $ with $n  = \dim X $. Here nonradial means that the radial direction
$\w{{\xi},\partial_{\xi}}$ is not in the span of
the Hamilton vector fields of the manifold, i.e., not equal to $H_{f}$ on
${\Sigma}_2$ for any $f \in C^1$ vanishing at ${\Sigma}_2$. 
Then by a change of local homogeneous symplectic coordinates we may assume that locally
\begin{equation}\label{sigma2} 
 {\Sigma}_2 = \set{\eta = 0} \qquad ({\xi}, \eta) \in
 \br^{n-d}\times \br^d \qquad \xi \ne 0
\end{equation}
for some $0 < d < n - 1$, which can be achieved by a conjugation with elliptic Fourier integral operators.

Now, since $p$ vanishes of at least second order at $\st$ we can define the \emph{order} of $p$ as
\begin{equation} \label{kdef}
2 \le \kappa(w) = \min \set{|\alpha|:\ \partial^\alpha p(w) \ne 0} \qquad w \in \st
\end{equation}
and $\kappa(\omega) = \min_{w \in \omega}  \kappa(w) $ for $\omega \subseteq \Sigma_2  $,
which is equal to $ \infty$ when $p $ vanishes of infinite order. This is an upper semicontinuous function on $\Sigma_2$, but since $\kappa(w) $ is has values in $\bn \cup \infty $, it attains its minimum $\kappa(\omega) $ on any set~$\omega \subseteq \Sigma_2 $.

If $P$ is of principal type near $\st$ then, since solvability
is an open property, we find that a necessary condition for $P$ to be
solvable at $\st$ 
is that condition $({\Psi})$ for the principal symbol is satisfied in some
neighborhood of $\st$. Naturally, this condition is empty
on $\st$ where we instead have conditions on the \emph{refined principal symbol:}
\begin{equation} \label{refprsymbol}
p_{sub} = p + p_{m-1} + \frac{i}{2}\sum_{j}^{}\partial_{x_j}\partial_{{\xi}_j}p_{}
\end{equation} 
(for the Weyl quantization, the refined principal symbol is given by $p + p_{m-1}  $).
The refined principal symbol is invariantly defined  as a
function on $T^*X$ modulo $ S^{m-2}$ under conjugation with elliptic Fourier integral operators, see \cite[Theorem~18.1.33]{ho:yellow} and \cite[Theorem~9.1]{ho:weyl}. (The latter result is for the Weyl quantization, but the result easily carries over to the Kohn--Nirenberg quantization for classical operators.)
The subprincipal symbol 
\begin{equation} \label{subsymbol}
p_{s} = p_{m-1} + \frac{i}{2}\sum_{j}^{}\partial_{x_j}\partial_{{\xi}_j}p_{}
\end{equation} 
is invariantly defined on $ \st $ under conjugation with elliptic Fourier integral operators.

\begin{rem}\label{subinv}
When $ \st = \set{\xi_1 = \xi_2 = \dots = \xi_j = 0}$ is involutive, the refined principal symbol is equal to $p_s = p_{m-1}$  at $ \st$.
\end{rem}

In fact, this follows since $ \partial_\xi p \equiv 0$ on $\st $. When composing~$P$ with an elliptic
pseudodifferential operator $C$, the value of the refined principal
symbol of $CP$ is equal to $cp_{sub} + \frac{i}{2}H_{p} c$ which is equal to $ cp_s$
at $\st$, where $c = {\sigma}(C)$.
Observe that the refined principal symbol is complexly
conjugated when taking the adjoint of the operator, see \cite[Theorem~18.1.34]{ho:yellow}.

The conormal bundle  $N^*\Sigma_2 \subset T^*(T^*X)$ of  $\Sigma_2$ is the dual of the normal bundle $T_{\st}T^*X/T\Sigma_2$. The conormal bundle can be parametrized by first choosing local homogeneous symplectic coordinates so that $ \st  $ is given by $ \set { \eta = 0}$. Then the fiber of  $N^*\Sigma_2$ can be parametrized by $ \eta \in \br^d$, $ d = \codim \st$, so that $N^*\st = \st \times \br^d$ and different parametrizations gives linear transformations on the fiber.

We define the $k$:th jet $J^k_w(f)$ of a $C^\infty$ function $f$ at $w \in \Sigma_2$ as the equivalence class of $f$ modulo functions vanishing of order $k+1$ at $w$. 
If $k = \kappa(\omega) < \infty$ is given by~\eqref{kdef} for the open neighborhood $ \omega \subset \st  $ then for $w = (x,y,\xi,0)\in \st$ we find that $J^k_w(p)$ is a well defined homogeneous function on  $N^*\Sigma_2$ given by 
 \begin{equation}
N_w^*\Sigma_2 \ni (w,\eta) \mapsto J^k_w(p)(\eta) = \partial^k_\eta p(w)(\eta)
 \end{equation}
since $\partial^{j} p \equiv 0  $ on~$\omega$, $j < k $. Here $ \partial^k_\eta p(w) $ is the $ k $-form given by the Taylor term of order $ k $ of~$ p $.
If  $ \kappa(\omega) = \infty $ then of course any jet of $ p$ vanishes identically on $ \omega$.
Here and in the following, the $ \eta $ variables will be treated as parameters.

\begin{defn}\label{redsub}
When  $k = \kappa(\omega) < \infty $ for some open set $ \omega \subset \st  $ we define the {\it reduced subprincipal symbol} by 
\begin{equation}\label{pskdef}
N^*\Sigma_2 \ni (w,\eta) \mapsto p_{s,k}(w,\eta) = J^k_w(p)(\eta) + p_s(w)  \qquad w \in \omega
\end{equation}
which is a polynomial in $ \eta $ of degree $ k $ and is given by the blowup of the refined principal symbol at $ \st$  see Remark~\ref{detarem}.  If  $\kappa(\omega) = \infty $ then we define $p_{s,\infty} =  p_{s}\restr \st $ so we have~\eqref{pskdef} for any $ k$. 
\end{defn}

\begin{rem}\label{invrem}
The reduced subprincipal symbol is well-defined up to nonvanishing factors under conjugation with elliptic homogeneous Fourier integral operators and under composition with classical elliptic pseudodifferential operator. 
\end{rem}

In fact, the reduced subprincipal symbol  is equal to the refined principal symbol modulo terms homogeneous of degree $ m$ vanishing at $\st  $ of order $k + 1 $ and terms homogeneous of degree $ m - 1$ vanishing at $\st  $. When composing with an elliptic pseudodifferential operator, both the terms in the refined subprincipal symbol gets multiplied with the same nonvanishing factor, and the terms proportional to $\partial p $ vanish on~$ \st $. Observe that if we multiply $p_{sub} $ with $ c$ then $p_{s,k} $ gets multiplied with $ c\restr \st $.

Since $p_{s}$ is only defined on~$\st$,
the Hamilton field $H_{p_{s,k}}  $ is only well defined modulo terms that are tangent to the sympletic  leaves of  $\st$, which are spanned by the Hamilton vector fields of functions vanishing on $ \st$.
Therefore, we shall assume that the reduced principal symbol essentially is constant on the leaves  of   $\st $ for fixed $\eta $ by assuming that
\begin{equation}\label{cond1a}
\left|d p_{s,k}\restr {TL} \right| \le C_0 |p_{s,k}| \qquad \text{at $\omega$  when $| \eta - \eta_0| \ll 1$ }
\end{equation}
for any leaf $L$ of $\st$ where $ \omega \subset  \st $.  Since $p_{s,k} $ is determined by the Taylor coefficients of the refined principal symbol at $ \st $ we find that \eqref{cond1a} is determined on $\st $.
When $\eta = 0 $ we get condition~\eqref{cond1a} on $p_s $ at $\st $ which was used in~\cite{de:sub}.
Condition \eqref{cond1a} is invariant under multiplication with nonvanishing factors and when $d p_{s,k} \ne 0$ on $p_{s,k}^{-1}(0) $ it is equivalent to the fact that $p_{s,k} $ is constant on the leaves up to nonvanishing factors by the following lemma.

\begin{lem}\label{constsublem}
Assume that  $ f(x,y,\zeta) \in C^\infty$ is a polynomial in $ \zeta$ of degree $ m$ for  $(x,y) \in \Omega  $, such that $\partial_x f  \ne 0 $ when $ f  = 0$. Assume that  $\Omega $ is an open bounded $C^\infty $ domain  such that $\Omega_{x_0} = \Omega \bigcap \set {x = x_0}$  is simply connected for all $ x_0$.  Let 
\begin{equation*}
\Xi = \set {(x, \zeta): \exists\, y \ (x,y) \in \Omega \text{ and } | \zeta - \zeta_0| < c}
\end{equation*}
be the projection on the $ (x,\zeta)$ variables and assume that there exist $ y_0(x) \in C^\infty$ such that $(x,y_0(x), \zeta) \in \Omega \times \set{|\zeta - \zeta_{0}|< c }$, $\forall  (x,\zeta) \in \Xi $.   Then
\begin{equation}\label{cond3}
| \partial_y f  | \le C_0 |f |  \quad \text{in $\Omega $ when $|\zeta -  \zeta_0| < c$}
\end{equation}
for $c > 0 $ implies that
\begin{equation}\label{qskmult}
f (x,y,\zeta) = c(x,y,\zeta)f_0(x,\zeta) \quad \text{in $\Omega $ when $|\zeta -  \zeta_0 | < c$}
\end{equation} 
where $0 < c_0 \le c(x,y,\zeta) \in C^\infty$ and $f_0(x,\zeta) =  f (x,y_0(x),\zeta)\in C^\infty $, which implies that $\left| \partial_y^\alpha f  \right| \le C_\alpha |f | $, $ \forall\, \alpha$,  in $\Omega $ when $|\zeta -  \zeta_0 | < c$.
\end{lem}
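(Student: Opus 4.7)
The plan is to construct $c(x,y,\zeta) := f(x,y,\zeta)/f_0(x,\zeta)$, defined where $f_0$ does not vanish, and show that it extends to a smooth, strictly positive function bounded below by a positive constant, thereby giving \eqref{qskmult}. The starting point is a Gronwall argument: for fixed $(x,\zeta)$ with $|\zeta-\zeta_0|<c$, choose a smooth path $\gamma:[0,1]\to\Omega_x$ from $y_0(x)$ to $y$, which exists because $\Omega_x$ is simply connected and hence connected. Then $\phi(t):=f(x,\gamma(t),\zeta)$ satisfies $|\phi'(t)|\le C_0|\gamma'(t)|\,|\phi(t)|$ by \eqref{cond3}, and Gronwall's inequality yields
\begin{equation*}
  e^{-C_0\ell}|f_0(x,\zeta)| \;\le\; |f(x,y,\zeta)| \;\le\; e^{C_0\ell}|f_0(x,\zeta)|,
\end{equation*}
where $\ell$ is the length of $\gamma$. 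Consequently, $f$ and $f_0$ share the same zero set $Z$, and off $Z$ the quotient $c=f/f_0$ is smooth and bounded uniformly above and below in modulus.

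The main step is the smooth extension of $c$ across $Z$. The inequality \eqref{cond3} forces $\partial_y f \equiv 0$ on $Z$, while the hypothesis gives $\partial_x f \ne 0$ on $Z$, so $Z$ is a smooth hypersurface whose tangent spaces contain the full $y$-direction; locally $Z = \{\Phi(x,\zeta) = 0\}$ with $\Phi$ smooth and $\nabla_x \Phi \ne 0$. Hadamard's division lemma applied transverse to $Z$ produces a local smooth factorization $f(x,y,\zeta) = \Phi(x,\zeta)\,g(x,y,\zeta)$ in which $g$ is smooth and $g \ne 0$ on $Z$ (its restriction to $Z$ equals the nonzero normal derivative of $f$). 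Restricting to $y=y_0(x)$ gives $f_0 = \Phi \,g_0$ with $g_0(x,\zeta) = g(x,y_0(x),\zeta)$ smooth and nonvanishing near $Z$, so $c=g/g_0$ is smooth and nonzero in a neighborhood of $Z$, consistent with the off-$Z$ definition.

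Since $c$ is globally smooth, continuous, nonvanishing, and equal to $1$ on $\{y=y_0(x)\}$, connectedness of each fiber $\Omega_x$ forces $c>0$; combined with the Gronwall bounds and the boundedness of $\Omega$ (shrinking the $\zeta$-ball slightly for compactness) this yields a uniform lower bound $c \ge c_0 > 0$. The claim $|\partial_y^\alpha f| \le C_\alpha |f|$ then follows immediately from Leibniz: because $f_0$ is independent of $y$, $\partial_y^\alpha f = (\partial_y^\alpha c)\,f_0 = (\partial_y^\alpha c / c)\,f$, and smoothness of $c$ together with $c \ge c_0$ bounds the coefficient. I expect the real obstacle to be the global smooth extension of $c$ across $Z$ together with the uniform positive lower bound; the polynomial-in-$\zeta$ hypothesis is presumably what tames the zero locus on the bounded set $\Omega$ so that the local Hadamard factorizations assemble globally.
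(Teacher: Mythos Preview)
Your Gronwall step and the final Leibniz deduction are fine, and you correctly identify that the zero set $Z$ is independent of $y$. The gap is in the Hadamard step. The function $f$ here is \emph{complex-valued} (it is applied in the paper to the reduced subprincipal symbol $p_{s,k}$), so the hypothesis $\partial_x f\neq 0$ on $Z$ only says the complex differential is nonzero; it does not force $Z$ to be a real hypersurface. Generically $Z=f^{-1}(0)$ has real codimension two, and there is no single real defining function $\Phi(x,\zeta)$ with $\nabla_x\Phi\neq 0$ through which you can divide. A concrete obstruction: if $f_0(x)=x_1+ix_2$ then $Z=\{x_1=x_2=0\}$ and a function such as $x_1$ vanishes on $Z$ but $x_1/(x_1+ix_2)$ is not smooth at the origin, so Hadamard division by a complex defining function is not available. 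Your Gronwall bound $|f|\asymp|f_0|$ rules out this specific example, but it does not by itself produce a smooth quotient across a codimension-two set, and you have not used it for that purpose.

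The paper's proof avoids describing $Z$ altogether. Given a point of $\Xi_0=\{f_0=0\}$, it picks $z\in\bc$ so that $\partial_x\re(zf_0)\neq 0$ there; then $S=\{\re(zf_0)=0\}$ is a genuine real hypersurface containing $\Xi_0$, and on each open half $S_\pm$ one has $f_0\neq 0$, so $c_\pm=f/f_0$ is smooth there. The matching of $c_+$ and $c_-$ across $S$ is then done by taking limits where $f_0\neq 0$, and by differentiating $f=c_\pm f_0$ in the transverse $x$-direction (using $\partial_x f_0\neq 0$) where $f_0=0$; iterating gives that all transverse derivatives match, hence $c$ extends smoothly. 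If you want to repair your argument, replace the Hadamard step by this real-hypersurface splitting; the rest of your outline then goes through (with the cosmetic correction that for complex $c$ the lower bound should read $|c|\ge c_0$).
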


If $\partial_{w}p_{s,k} \ne 0  $ when $p_{s,k}= 0 $ and $ p_{s,k}$ satisfies \eqref{cond1a}, then we find from  Lemma~\ref{constsublem} after possibly shrinking $ \omega$ that $ p_{s,k}$ is constant on the leaves of $ \st$ in $ \omega$  when $| \eta - \eta_0|  < c_0 $ after multiplication with a nonvanishing factor.

\begin{proof}
Let  $ \Xi_0 =  \set {(x, \zeta) \in \Xi: f(x,y_0(x),\zeta) = 0 }$. We shall first prove the result when  $ (x, \zeta) \in \Xi \setminus  \Xi_0 $. Then $f  \ne 0$ at $(x, y_0(x),\zeta) $ and~\eqref{cond3} gives that $\partial_{y}\log f $ is
uniformly bounded near $(x, y_0(x),\zeta) $, where $\log f $ is a branch of the
complex logarithm. Thus, by integrating with respect
to~$y $ starting at $y = y_0(x)$ in the simply connected  $\Omega_x \times \set{|\zeta - \zeta_{0}|< c }$
we find that $\log f (x,y,\zeta) - \log f (x,y_0(x),\zeta)  \in C^ \infty$ is bounded  and by exponentiating we obtain
\begin{equation}\label{factcond}
f (x,y,\zeta) = c (x,y,\zeta)f_0( x,\zeta) \qquad \text {in $ \Omega_x$ for $| \zeta  -  \zeta_0 | < c$}
\end{equation}
when  $ (x, \zeta) \in \Xi \setminus  \Xi_0 $. Here  $f_0(x,\zeta) =  f (x,y_0(x),\zeta)\in C^\infty $ and $0 < c_0 \le  c(x,y,\zeta) \in C^\infty$ is uniformly bounded such that $c(x, y_0(x),\zeta) \equiv 1$.  This gives that $f ^{-1}(0)$ is constant in $y$ when $(x, \zeta) \notin \Xi_0 $.

Since  $\partial_x f  \ne 0 $ when $ f    = 0$ we find that $ \Xi_0 $ is nowhere dense.  Let $(x_0, \zeta_0) \in \Xi_0 $ and choose $z \in \bc $ such that  $ \partial_x\re zf (x_0,y_0(x_0),\zeta_0) \ne 0 $. Let  $  S_\pm = \set{ \pm \re zf (x,y_0(x),\zeta)  > 0} $  then
\begin{equation}\label{pmeq}
f (x,y,\zeta) = c_\pm(x,y,\zeta) f (x,y_0(x),\zeta) \quad \text{in $S_\pm $}
\end{equation}
where  $0 < c_0 \le  c_\pm (x,y,\zeta) \in C^\infty$ is uniformly bounded.
By taking the limit of  \eqref{pmeq} at $S =   \set{\re zf (x,y_0(x),\zeta)  = 0} $ we find that $ c_+   =  c_- $ when $  f  \ne 0 $ at $ S$. When  $  f   = 0$ at $ S$ then by differentiating  \eqref{pmeq}  in $ x$ we find that  $ c_+   =  c_- $.
By repeatedly differentiating \eqref{pmeq}  in $ x$ we obtain by recursion that  $c_\pm $ extends to  $c_0 < c(x,y,\zeta) \in  C^\infty $ in $\Omega $ when  $| \zeta  -  \zeta_0 |< c$ so that \eqref{qskmult} holds. 
\end{proof}

If $p_{s,k} $ is  constant in $y $ in a neighborhood of the semibicharacteristic, then the Hamilton field $H_{p_{s,k}}  $ will be constant on the leaves and defined modulo tangent vector to the leaves. Therefore we shall introduce a special symplectic structure on $ N^*\st$. 
Recall that the symplectic annihilator to a linear space consists of  the vectors that are symplectically orthogonal to the space. 
Let  $T{\Sigma}_2^{\sigma}$ be the symplectic annihilator to $T
{\Sigma}_2$, which spans the symplectic leaves of $\st$. If $\st =
\set{\eta = 0}$, 
$(x,y) \in \br^{n-d}\times \br^{d}$, then the leaves are
spanned by  $\partial_{y}$.  Let
\begin{equation}
T^{\sigma}{\Sigma}_2 = T{\Sigma}_2/T{\Sigma}_2^{\sigma}
\end{equation}
which is a symplectic space over ${\Sigma}_2$ which in these
coordinates is parametrized by
\begin{equation}\label{tsigmadef}
T^{\sigma}\st =  
\set{((x_0,y_0;{\xi}_0,0); (x,0;{\xi}, 0)) \in T \st:\ (x,{\xi}) \in T^*\br^{n-d}}
\end{equation} 
This is isomorphic to the symplectic manifold $T^*\br^{n-d} $ with $ w \in \st $ as parameter.

We define the symplectic structure of $ N^*\st $ by lifting  the structure of $\st$ to the fibers, so that the leaves of $ N^*\st $ are given by $L \times \set {\eta_0} $ where $ L$ is a leaf of  $ \st$ for $\eta_0 \in \br^d $.
In the chosen coordinates, these leaves are parametrized by  $ \set{(x_0,y;{\xi}_0,0)\times \set {\eta_0}:\ y \in \br^{d}}$.
The radial direction in  $N^*\st $ will be the radial direction  in $\st $, i.e.\ $\w{\xi,\partial_\xi} $, lifted to the fibers.
Similarly, a vector field $ V \in T(N^*\st) $ is parallel to the base of $N^*\st $ if  it is in $T\st  $, which means that  $V \eta = 0$. 

If $ p_{s,k}$ is constant in $ y$ then $H_{p_{s,k}}$ coincides with the Hamilton vector field of $p_{s,k}$ on $p_{s,k}^{-1}(0) \subset  N^*\st $ with respect the symplectic structure on the symplectic manifold
$N^*\st $. In fact, in the chosen coordinates we obtain from~\eqref{cond1a} that
\begin{equation}\label{hpsk}
H_{p_{s,k}} = 
\partial_{\xi}p_{s,k} \partial_{x} - \partial_{x}p_{s,k}
\partial_{{\xi}}
\end{equation}
modulo $\partial_{y}$, which is nonvanishing if
$\partial_{x,{\xi}}p_{s,k}  \ne 0$. Thus $H_{p_{s,k}}$ is well-defined  modulo terms containing
$\partial_{y}$  making it well defined on $T^{\sigma}{\Sigma}_2 \times \br^d$.
Now, if $p_ {s,k}= 0$ then by~\eqref{cond1a} we find that $ d p_{s,k}\restr {T\st}$ vanishes on $T{\Sigma}_2^{\sigma}$ so $dp_{s,k}\restr {T\st}$
is well defined on $T^{\sigma}{\Sigma}_2 $. 
We may identify $T (N^*{\Sigma}_2)$ with $T \st \times \br^d $ since the fiber $\eta $ is linear.

\begin{defn}\label{suprinctype}
We say that the operator $P$ is of \emph{subprincipal type} on $ N^* \st $ if the
following hold when $p_{s,k} = 0$ on $N^*\st$: $H_{p_{s,k}}
$ is parallel to the base,  
\begin{equation}\label{subprinc}
d p_{s,k}\restr{T^{\sigma}{\Sigma}_2} \ne 0 
\end{equation}
and the corresponding Hamilton vector field  $H_{p_{s,k}}$ of~\eqref{subprinc}
does not have the radial direction. 
The (semi)bicharacteristics of $ p_{s,k} $ with respect to the symplectic structure of $ N^* \st $ are called the {subprincipal (semi)bicharacteristics}.
\end{defn}

Clearly, if coordinates are chosen so that~\eqref{sigma2} holds, then \eqref{subprinc} gives that $\partial_{x,{\xi}}p_{s,k} \ne 0$ when $ p_{s,k} = 0$ and the condition that the Hamilton vector field does not have the radial direction means that $ \partial_{\xi}p_{s,k} \ne 0 $ 
or $\partial_{x}p_{s,k} \nparallel {\xi}$ when $p_{s,k} = 0$.
Because of~\eqref{subprinc} we find that $H_{p_{s,k}} $ is transversal to the foliation of $N^*\st $ and by~\eqref{cond1a} it is parallel to the base at the characteristics.  
The semibicharacteristic of $ p_{s,k}$ can be written  $\Gamma = \Gamma_0 \times \set{\eta_0} \subset T(N^*\st)$, where $\Gamma_0 \subset \st$ is transversal to the leaves of~$\st$ and $\eta_0$ is fixed.
The definition can be localized to  an open set $ \omega  \subset N^*\st$. It is a generalization of the definition of subprincipal type in~\cite{de:sub}, which is the special case when $ \eta = 0$.
When $ P$ is of subprincipal type and satisfies \eqref{cond1a}, then we find from Lemma~\ref{constsublem}  that $ p_{s,k}$ is constant on the leaves of $ \st$ near a semibicharacteristic after multiplication with a nonvanishing factor.
We can now state a condition corresponding to ($ \Psi$) on the reduced subprincipal symbol.

\begin{defn}\label{subpsidef}
If $k = \kappa(\omega) $ for an open set $ \omega \subset N^*\st  $, then
we say that $P$ satisfies condition $\sub_k({\Psi})$ if $\im a p_{s,k}$ does not change sign from $-$ to $+$ when going in the 
positive direction on the subprincipal bicharacteristics of $\re
a p_{s,k}$ in~$ \omega$   for any $0 \ne a \in C^\infty$. 
\end{defn}

Observe that when $k < \kappa(\omega)  $  or $k = \kappa(\omega)  = \infty $ then  $p_{s,k} = p_s\restr \st$ on $\omega $ and $\sub_k({\Psi})$  means that the subprincipal symbol $p_s $ satisfies condition $(\Psi) $ on $T^\sigma \st$, which is condition  $\sub({\Psi})$ in~\cite{de:sub}. 
In general, we have that condition $\sub_k({\Psi})$  is condition (${\Psi}$) given by~\eqref{psicond} on the reduced subprincipal symbol $ p_{s,k} $ with respect to the symplectic structure of $N^*\st $. 
But it is equivalent to the condition  (${\Psi}$) on the reduced subprincipal symbol $ p_{s,k} $ with respect to the standard  symplectic structure. In fact, condition $\sub_k({\Psi})$ means that condition   
(${\Psi}$) holds for $ p_{s,k} \restr {\eta = \eta_0} $ for any $\eta_0 $. 
By using Lemma \ref{constsublem} we may assume  that $ p_{s,k} $ is independent of~$ y$ after multiplying with $ 0 \ne a \in C^\infty$. In that case, the conditions are equivalent and
both are invariant under multiplication with nonvanishing smooth factors.

By the invariance of  condition (${\Psi}$) given by~\cite[Theorem~26.4.12]{ho:yellow} it suffices to check condition  $\sub_k({\Psi})$ for some $ a$ such that $H_{\re ap_{s,k}} \ne 0$.
We also find that condition $\sub_k({\Psi})$  is invariant under symplectic
changes of variables,
thus it is invariant under conjugation of the operator by elliptic
homogeneous Fourier integral operators. Observe that the sign
change is reversed when taking the adjoint of the operator.

Next, we assume that condition  $\sub_k({\Psi})$  is not satisfied on a
semibicharacteristic ${\Gamma}$ of $p_{s,k}$, i.e., that $\im ap_{s,k}$ changes
sign from $-$ to $+$ on the positive flow of $H_{\re
a p_{s,k}} \ne 0$ for some  $0 \ne a \in C^\infty$, where  $\eta $ is constant on $ \Gamma$. Thus, by Lemma~\ref{constsublem} we may assume that $p_{s,k} $ is constant on the leaves in a  neighborhood $\omega $ of $ \Gamma$, and by multiplying with $ a$ we may assume that $a \equiv 1 $ and that $ y$ is constant on the semibicharacteristic.

\begin{defn}\label{def}
Let $p $ be of subprincipal type on $ N^* \st $ and $\Gamma $ a subprincipal semibicharacteristic of $ p$.
We say that a $ C^\infty $ section of
spaces $L \subset T (N^*\st )$ is \emph{gliding} for $\Gamma $ if  $L $ is symplectic  of maximal dimension  $2n - 2(d+1) \ge 2$ so that $ L$ is the symplectic annihilator  of $T\,\Gamma$ and the foliation of $\st $, which gives  $L \subset  T\, \st $ since $\eta $  is constant on $ L$.
We say that a $  C^\infty $ foliation of $N^*  \Sigma_2$  with symplectic leaves  $ M $ is gliding  for $\Gamma $  if the section of tangent spaces $TM $ is a gliding section for~$\Gamma$.
\end{defn}

Actually, he gliding 
foliation $ M$ for a   subprincipal semibicharacteristic $ \Gamma$  is uniquely defined near $\Gamma$, since it is determined by the unique annihilator $TM $ and $\Gamma$ is transversal to the foliation of $\st $ when $p= 0 $ by~\eqref{subprinc}. This definition can be localized to a neighborhood of  a subprincipal semibicharacteristic.

\begin{exe}
Let $p $ be  of subprincipal type on $ N^* \st $. Assume that
$ \st = \set {\eta = 0}$, $\partial_y p = \set {\eta, p } = 0$ and $\partial_{x, \xi}$ spans $ TM$ of the gliding foliation $M $ of $N^* \st $ for the bicharacteristic of $H_{\re p} \ne 0$. Then we may complete $x $, $ \xi $, $\tau = \re p $ and $ \eta $ to a symplectic coordinate system $(t,x,y;\tau, \xi, \eta) $ so that the foliation $ M$ is given by intersection of the level sets of $\tau$, $t$ ,  $y $ and $ \eta$. In fact, in that case we have $\partial \re p \ne 0 $ but $\partial_x\re  p =  \partial_\xi \re  p = 0$.
\end{exe}

In the case when $ \eta_0 \ne 0$ and $ k = \kappa(\omega)< \infty $ we will have estimates on the rate of
vanishing of $ \partial_\eta p_{s,k} $ on the subprincipal semibicharacteristic.  Recall that the semibicharacteristic can be written $\Gamma \times \set {\eta_0} $. Observe that  
\begin{equation}
\partial_\eta p_{s,k} =\Cal J^{k-1}(\partial_\eta  p) = \Cal J^{k-1}(\partial  p) 
\end{equation} 
since $ p$ vanishes of at least order $ k$ at $\st $ and that  the normal derivatives $\partial_\eta $ is well-defined modulo nonvanishing factors at $\eta = 0 $.  Let  $\omega \subset \st $ be a neighborhood of  the subprincipal semibicharacteristic $ \Gamma$ and let  $ M$ be the local $  C^\infty $  foliation of $N^*\st  $ at $ \omega$ which is  gliding for $\Gamma$. 
When $\eta_{0} \ne 0 $ we shall assume that there exists $ \varepsilon > 0$ so that
\begin{equation}\label{kcond}
\left|  V_1\cdots V_\ell \, \partial_\eta p_{s,k} \right| \le C_\ell | p_{s,k} |^{{1}/{k} + \varepsilon} \qquad \text{on $\omega$  when $| \eta - \eta_0| \ll 1$ } 
\end{equation}
for any vector fields  $V_j \in TM$, $0 \le j \le \ell $ and any  $\ell $.
Condition  \eqref{kcond} gives that  $V_1\cdots V_\ell \, \partial_\eta p_{s,k} $ vanishes when $ p_{s,k} = 0$.
This definition is invariant under symplectic changes of coordinates and multiplication with nonvanishing factors.
Observe that we have $ V_1\cdots V_\ell \, \partial_\eta p_{s,k} = 0$ when $ \eta_0 = 0$ since then $p =  \partial_\eta  p = 0$ and $ V_j \in TM \subset T\st $.  
Condition \eqref{kcond} with $\ell = 0 $ gives that $\eta \mapsto |p_{s,k}(w,\eta)|^{{(k-1)}/{k} - \varepsilon} $ is Lipschitz continuous, thus $\eta \mapsto p_{s,k}(w,\eta) $ vanishes at $\eta_{0} $ of order 3 when $ k = 2$ and  order 2 when $ k > 2$.

In the case $k  = \kappa(\omega) =2$ we shall also have a similar condition on the rate of
vanishing of $ \partial^2_\eta p_{s,k} $ on the subprincipal semibicharacteristic. 
Then 
\begin{equation}\label{hessdef}
\partial^2_\eta p_{s,k} = \Cal J^{0}(\partial^2  p) = \hess p\restr \st
\end{equation}  
is the Hessian of the principal symbol  $ p$ at $\st $, which is well defined on
the normal bundle $N \st$ since it vanishes on $T\st$.
Since $p = \partial_\eta p = 0 $ on $ \st$,  we find that $\hess p$ is invariant  modulo nonvanishing smooth factors  under symplectic 
changes of variables and multiplication of $P$ with elliptic
pseudodifferential operators.  
With the gliding  $  C^\infty $ foliation $M$ of $N^* \st $ for $\Gamma$ 
we shall assume that there exists
${\varepsilon} > 0$ so that
\begin{equation}\label{cond2}
\mn{V_1\cdots V_\ell \, \hess p} \le C'_\ell | p_{s,k}|^\varepsilon
 \qquad \text{on $\omega$}
\end{equation}
for any vector fields  $V_j \in TM $, $0 \le j \le \ell $  and any  $\ell $.
This definition is invariant under symplectic changes of coordinates and multiplication with nonvanishing factors.

\begin{rem}
Conditions \eqref{kcond} and \eqref{cond2} are well defined and invariant under multiplication with elliptic pseudodifferential operators and conjugation with elliptic Fourier integral operators.
\end{rem}

Examples~\ref{Miz}--\ref{CPTgen} show that conditions~\eqref{kcond} and~\eqref{cond2} are essential for the 
necessity of  $\sub_k({\Psi})$ when $ k= 2$. 
 
\begin{exe}\label{finex}
 If  $\re p_{s,k} = \tau $, $\st = \set {\eta = 0} $, $TM $ is spanned by $\partial_{x,\xi} $ and $t \mapsto \im p_{s,k} $ vanishes of order $3 \le  \ell < \infty $ at $t = t_0(y,\eta) \in C^\infty$ then~\eqref{kcond}  and~\eqref{cond2} hold. 
 If $t_0(y) $ is independent of $\eta $ then conditions~\eqref{kcond}   and~\eqref{cond2} hold for any finite $\ell > 0 $.
 \end{exe}

 In fact, if $0 < \ell < \infty  $ then we can write $\im p_{s,k} = a(t-t_0(y,\eta))^\ell $ with $a \ne 0 $.  If $\ell > \frac{k}{k-1} $ then for any $\alpha$ we find that  $\partial_{x,\xi}^\alpha  \partial_\eta \im p_{s,k}$ vanishes of order $\ell - 1  > \ell/k $ at $ t = t_0$, and if $\ell > 2 $ then $\partial_{x,\xi}^\alpha  \partial_\eta^2 \im p_{s,k}$ vanishes of order $\ell - 2  > 0$ at $ t = t_0$. 
 If $ t_0$ is independent of $ \eta $ then $\partial_{x,\xi}^\alpha   \partial_\eta^j \im p_{s,k}$ vanishes of order $\ell  $ for any $ j$ and $ \alpha$.

Since $ \partial_\eta p_{s,k}$ is homogeneous of degree $ k-1$ in $ \eta$, we find from Euler's identity that $\partial_\eta p_{s,k}(w,\eta) = (k-1)\, \eta \cdot \hess p(w,\eta)$. Thus~\eqref{cond2} implies that   $| V_1\cdots V_\ell \, \partial_\eta p_{s,k}| \ls | p_{s,k}|^\varepsilon$ when $\eta \ne 0 $, 
 but we shall only use condition~\eqref{cond2}  when~\eqref{kcond} holds, see Theorem~\ref{mainthm}. 
 Here $a \ls b $ means $a \le Cb  $ for some constant $ C$, and similarly for $a \gs b $.

Now, by \eqref{cond1a} we have assumed that the reduced  subprincipal symbol~$p_{s,k}$ is constant on the leaves of~$\st$ near $\Gamma $
up to multiplication with nonvanishing factors, but  when $\kappa < \infty $ we will actually have that condition on the following symbol.

\begin{defn}\label{defqsk}
If $ k  = \kappa(\omega) < \infty$ is the order of $ p$ on an  open set $\omega \subseteq \st $ then we define the \emph{extended subprincipal symbol} on  $N^*\omega$ by
\begin{multline}\label{qsk}
N^*\Sigma_2 \ni (w,\eta) \mapsto q_{s,k}(w,\eta,\lambda)  =  \lambda J^{2k-1}_w(p)(\eta/ \lambda^{1/k} )  \\ + 
 J^{k-1}_w(p_{s})(\eta/ \lambda^{1/k}) 
  \cong  p_{s,k}(w,\eta) +  \Cal O (\lambda^{-{1}/{k}}) 
\end{multline}
which is a weighted polynomial in $ \eta $ of degree $ 2k -1$. 
When $ \kappa(\omega) = \infty $ we define $ q_{s, \infty} \equiv p_{s} $. 
\end{defn}

By the invariance of $ p$ and $ p_s$, the extended subprincipal symbol transforms as jets under homogeneous symplectic changes of coordinates that preserve the base~$\st$. It is well defined up to nonvanishing factors and terms proportional to the jet $ J^{k-1}_w(\partial_\eta p )(\eta/ \lambda^{1/k}) \cong \lambda^{{1}/{k} -1}\partial_\eta^k p $ modulo $\Cal O(\lambda^{-1} )$ under multiplication with classical elliptic pseudodifferential operators.
The extended and the reduced subprincipal symbols are complexly conjugated when taking adjoints.

\begin{rem}\label{detarem}
The extended subprincipal symbol \eqref{qsk} is given by the blowup of the reduced principal symbol at $ \eta = 0$ so that
\begin{multline}\label{qskexp}
\lambda^{2 -m} p_{sub}(x,y;  \lambda  \xi, \lambda^{1 - {1}/{k}} \eta) \cong   \lambda q_{s,k}(x,y;\xi,\eta,\lambda) \\ =  \lambda p_{s,k}(x,y;\xi,\eta) +  \Cal O (\lambda^{1-{1}/{k}}) \qquad \text{modulo $\Cal O(1)$}
 \end{multline}
We also have that
\begin{multline}
\lambda^{2-m} \partial_\eta p_{sub}(x,y;  \lambda  \xi, \lambda^{1 - {1}/{k}} \eta) \cong  \lambda^{{1}/{k}} \partial_\eta q_{s,k}(x,y;\xi,\eta,\lambda) \\ =  \lambda^{{1}/{k}} \partial_\eta p_{s,k}(x,y;\xi,\eta) +  \Cal O (1) 
 \end{multline}
modulo $\Cal O( \lambda^{{1}/{k} - 1})$ and
\begin{multline}
\lambda^{2 -m} \partial_\eta^2 p_{sub}(x,y;  \lambda  \xi, \lambda^{1 - 1/k} \eta) \cong  \lambda^{{2}/{k} -1} \partial_\eta^2 q_{s,k}(x,y;\xi,\eta,\lambda)  \\  =  \lambda^{{2}/{k} -1} \partial_\eta^2 p_{s,k}(x,y;\xi,\eta) + \Cal O( \lambda^{{1}/{k} - 1})
\end{multline}
modulo $\Cal O( \lambda^{{2}/{k} - 2})$.
Observe that if $ $P  is of subprincipal type then $ d q_{s,k}\restr{T^{\sigma}{\Sigma}_2} \ne 0  $ when  $q_{s,k} = 0$ for $ \lambda \gg 1 $ since this holds for $p_{s,k}$. 
\end{rem}

In fact, $d q_{s,k} \cong  d p_{s,k} $ modulo  $\Cal O(\lambda^{-1/k}) $ and since $| d p_{s,k}| \ne 0  $ the distance between $q_{s,k}^{-1}(0) $ and  $p_{s,k}^{-1}(0) $ is  $\Cal O(\lambda^{-1/k}) $ for $ \lambda \gg 1$.
Observe that composition of the operator $ P$ with elliptic pseudodifferential operators gives factors proportional to $ J^{k-1}_w(\partial_\eta p )(\eta/ \lambda^{1/k}) $ which we shall control with~\eqref{kcond}.

By \eqref{kcond} we have that $\partial_{{\eta}} p_{s,k} = 0 $ when $p_{s,k} = 0$ at $\omega $. We shall also assume this for the next term in the expansion of $ q_{s,k}$,
\begin{equation}\label{dqcond}
\partial_{{\eta}}   q_{s,k} = \Cal O(\lambda^{-2/k}) \text{ when $p_{s,k} = 0$ at $ \omega$ for $| \eta - \eta_0|  < c_0 $  and $\lambda \gg 1 $}
\end{equation}
Actually, we only need this where $p_{s,k}\wedge d \ol p_{s,k} $ vanishes of infinite order at  $p_{s,k}^{-1}(0)$ in $\omega $, where $d p_{s,k}\wedge d \ol p_{s,k} $ is the complex part of $d p_{s,k} $.

We shall also assume a condition similar to~\eqref{cond1a} on  the extended subprincipal symbol
\begin{equation}\label{cond1}
\left|d q_{s,k}\restr {TL} \right| \le C_0 |q_{s,k}| \qquad \text{at $\omega$  when $| \eta - \eta_0|  < c_0 $  and $\lambda \gg 1 $}
\end{equation}
for any leaf $L$ of $N^* \st$ where $\omega \subset \st  $. 
By letting $ \lambda \to \infty$ we obtain that~\eqref{cond1a} holds, since $q_{s,k} \cong p_{s,k}$ modulo  $ \Cal O(\lambda^{-1/k})$.  Also, 
multiplication of $ p_{sub}$ by $a \cong a_0 + a_{-1} + \dots$  with $a_j $ homogeneous of degree~$ j$ and $a_0 \ne 0 $ gives that $q_{s,k} $ gets multiplied by the expansion of $\eta \mapsto a_0(x,y;\xi,\eta/\lambda^{1/k}) $ since $a p_{sub} \cong a_0 p_{sub} + a_{-1}p  \cong a_0 p_{sub} $ modulo terms in $ S^{m-1}$ vanishing of order $ k$ at $ \st $. Thus, condition~\eqref{cond1} is invariant under multiplication  of $ p_{sub}$ with classical elliptic symbols.
Also,  \eqref{cond1} is invariant under changes of homogeneous symplectic coordinates that preserves $\st = \set {\eta = 0 } $ and $TL $. 
Now, we have $ \partial_{x,\xi} q_{s,k} \ne 0  $ when $q_{s,k} = 0$ and $ \lambda \gg 1$ since $ P$ is of subprincipal type. 

\begin{rem}\label{qskrem}
Since the semibicharacteristic is transversal to the leaves of $\st $ and condition~\eqref{cond1}  holds near the semibicharacteristic, Lemma~\ref{constsublem} gives that 
\begin{equation}\label{qconst}
q_{s,k}(x,y;\xi,\eta,\lambda) = c(x,y;\xi,\eta,\lambda) \wt  q_{s,k}(x;\xi,\eta,\lambda) \qquad \lambda \gg 1
\end{equation}
for $| \eta - \eta_0|  < c_0 $ near the semibicharacteristic.
Here $\wt  q_{s,k}(x;\xi,\eta,\lambda)$ is the value of $ q_{s,k}$ at the intersection of the semibicharacteristic and the leaf.
In fact, the proof of the lemma extends to symbols depending uniformly on the parameter $0 < \lambda^{-1/k} \ll 1$.

Condition~\eqref{cond1} is \emph{not} invariant under multiplication of $P$ with elliptic pseudodifferential operators or conjugation with elliptic Fourier integal operators. In fact, if $A$ has symbol $a$ then the refined principal symbol of the composition $AP$ is equal to $ap_{sub} + \frac{1}{2i}\set{a, p}$ which adds $ \frac{i}{2}\lambda^{{1}/{k} - 1} \partial_y a \partial_\eta p_{s,k} $ to $q_{s,k}$.  But \eqref{dqcond} is invariant, since the term containing the factor $ \partial_\eta p_{s,k} $ is $\Cal O(\lambda^{-2/k}) $ when $k > 2 $ and has vanishing $\eta $ derivative  at $ p_{s,k}^{-1}(0)$ by \eqref{cond2} when $k = 2 $.
\end{rem}

This is one reason why we have to control the terms with $\partial_\eta p_{s,k} $ with~\eqref{kcond}.
When $ k < \infty$, $ q_{s,k}$ is a polynomial in $\eta/\lambda^{1/k}$ of degree $2k-1$ and $c$ in \eqref{qconst} is
an analytic function in $\eta/\lambda^{1/k}$ on $\omega$ when $| \eta - \eta_0|  < c_0 $. Actually, it suffices to expand $ c$  in $\eta/\lambda^{1/k}$ up to order $ k$ in order to obtain~\eqref{qconst} modulo $\Cal O(\lambda^{-1}) $. 
If $ C(x,y;\xi, \eta/ |\xi|^{1/k}) = c(x,y;\xi, \eta, |\xi|)  $ in~\eqref{qconst} then we obtain that $Cp_{sub} $ is constant in $ y$  modulo $ S^{m-2}$ in $ \omega$ when $\big |\eta - \eta_0  |\xi|^{1 - {1}/{k}} \big | < c_0 |\xi|^{1 - {1}/{k}} $.

In the case when the principal symbol~$p$ is real, a necessary
condition for solvability of the operator is that the
imaginary part of the subprincipal symbol does not change 
sign from $-$ to $+$ when going in the positive direction on a $ C^\infty $ limit
of normalized bicharacteristics of the principal symbol~$p$ at $ \st $,
see~\cite{de:limit}. When $p$ vanishes of exactly order $k$ on~$\st  = \set{\eta =0}$
and the
localization 
\begin{equation*}
{\eta} \mapsto \sum_{|{\alpha}| = k}
\partial_{\eta}^{\alpha} p(x,y;0,{\xi}) {\eta}^{\alpha}/\alpha!
\end{equation*}
is of principal type when ${\eta} \ne 0$ such limit
bicharacteristics are tangent to the leaves of~$\st$. 
In fact, then 
$
|\partial_{\eta} p(x,y;{\xi},\eta)|
\cong |\eta|^{k-1}
$
and $|\partial_{x,y,{\xi}} p(x,y;{\xi},\eta)| = \Cal
O(|\eta |^{k})$, which gives $H_p = \partial_{\eta} p \partial_{y} +
\Cal O(|\eta|^k)$. Thus the normalized Hamilton vector field is equal to $A\partial_{y}$, $ A \ne 0 $, modulo terms that are $ \Cal O(|\eta|)$, so the normalized Hamilton vector fields have limits that are tangent to the leaves.
That the $ \eta$ derivatives dominates $\partial p $ can also be seen from Remark~\ref{detarem}.
When the principal symbol is proportional to a real valued symbol, this
gives examples of nonsolvability when the subprincipal symbol is not
constant on the leaves of~$\st$, see Example~\ref{etaex} and~\cite{de:limit} in general. Thus condition~\eqref{cond1} is natural for the the study of the  necessity of  $\sub_k({\Psi})$ if there are no other conditions on the principal symbol.

We shall study the microlocal solvability of the operator, which is
given by the following definition. Recall that $H^{loc}_{(s)}(X)$ is
the set of distributions that are locally in the $L^2$ Sobolev space
$H_{(s)}(X)$. 

\begin{defn}\label{microsolv}
If $K \subset S^*X$ is a compact set, then we say that $P$ is
microlocally solvable at $K$ if there exists an integer $N$ so that
for every $f \in H^{loc}_{(N)}(X)$ there exists $u \in \Cal D'(X)$ such
that $K \bigcap \wf(Pu-f) = \emptyset$. 
\end{defn}

Observe that solvability at a compact set $K \subset X$ is equivalent
to solvability at $S^*X\restr K$ by~\cite[Theorem 26.4.2]{ho:yellow},
and that solvability at a set implies solvability at a subset. Also,
by~\cite[Proposition 26.4.4]{ho:yellow} the microlocal solvability is
invariant under conjugation by elliptic Fourier integral operators and
multiplication by elliptic pseudodifferential operators.
We can now state the main result of the paper.

\begin{thm}\label{mainthm}
Assume that $P \in {\Psi}^m_{\mathrm{cl}}(X)$ has principal symbol that
vanishes of at least second order at a nonradial involutive manifold $\st \subset T^* X \setminus 0$.  We  
assume that $ P$  is of subprincipal type, satisfies conditions~\eqref{dqcond} and~\eqref{cond1} but does not satisfy condition $\sub_k({\Psi})$  near the subprincipal semibicharacteristic~$ \Gamma \times \set {\eta_0}$ in $N^*\st $ 
where $\Gamma  \subset \omega \subset \st$ and  $k = \kappa(\omega) $.  

In the case when $ \eta_0 \ne 0$  we assume that $ P$ satisfies conditions~\eqref{kcond} and when $k=2$  we also assume condition~\eqref{cond2} for a gliding symplectic foliation~$ M$ of $N^*\st $ for the subprincipal semibicharacteristic. 

In the case $ \eta_0 = 0$ and
$k = 2 $ we assume condition \eqref{cond2}  for a gliding symplectic foliation $ M$  of $N^*\st $  for  the subprincipal semibicharacteristic, and when $ k > 2$ we assume no extra condition.

Under these conditions, $P$ is not locally solvable near\/~${\Gamma} \subset \st$.
\end{thm}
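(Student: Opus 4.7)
The approach is the classical pseudomode/duality argument: we produce for the adjoint $P^*$ a sequence $u_\lambda$ of approximate null solutions concentrated at a chosen sign-change point of $\im p_{s,k}$ on $\Gamma\times\set{\eta_0}$, with $\mn{u_\lambda}_{(-M)}\gs 1$ and $\mn{P^* u_\lambda}_{(N)}=\Cal O(\lambda^{-N})$ for every $N$. By the well-known functional-analytic reduction (see \cite[Prop.~26.4.4]{ho:yellow}) this rules out the a~priori estimate for $P^*$ that microlocal solvability of $P$ at $\Gamma$ would require, and hence proves the theorem. Since condition $\sub_k(\Psi)$ fails for $P$ along $\Gamma\times\set{\eta_0}$, we have $\im p_{s,k}$ changing from $-$ to $+$ along a bicharacteristic of $\re p_{s,k}$; for $P^*$ the change is $+$ to $-$, which is the sign convention under which the WKB amplitude is stable and $\im\phi$ can be arranged non-negative along the flow.

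The first step is the normal-form reduction from Section~\ref{normform}: after conjugation by elliptic FIOs and multiplication by an elliptic classical operator we may assume $\st=\set{\eta=0}$, that the bicharacteristic lies in $\set{y=\text{const}}$, and (by Lemma~\ref{constsublem} together with \eqref{cond1}) that $q_{s,k}$ is independent of $y$ near $\Gamma$ modulo the allowed error in Remark~\ref{qskrem}. By the invariance under nonvanishing factors we may further arrange $\re q_{s,k}=\tau$ where $\tau$ is the fiber variable in one chosen transversal direction, so that the subprincipal flow is $\partial_t$ and the sign change of $\im q_{s,k}$ happens in the $t$-variable alone. The two branches $k<\infty$ and $k=\infty$ use different normal forms, and the case $\eta_0=0$ is treated separately from $\eta_0\ne 0$ because the scaling $\eta\mapsto \eta/\lambda^{1/k}$ of Remark~\ref{detarem} becomes degenerate.

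The second step is the WKB ansatz $u_\lambda=\lambda^{\kappa}\chi\, e^{i\lambda\phi}\sum_{j\ge 0} a_j \lambda^{-j/k}$, where the inhomogeneous blowup of Remark~\ref{detarem} dictates the half-integer power $\lambda^{-j/k}$ (and the conjugate variable to $\eta$ is rescaled accordingly, so the phase localizes at $\eta=\eta_0$ in the blown-up picture). Inserting $u_\lambda$ in $P^*$ and expanding yields an eikonal equation for $\phi$ whose leading term is $q_{s,k}^*(x;\partial_x\phi,\partial_y\phi\cdot\lambda^{1/k-1})$; this is solved in Sections~\ref{eiksect} and \ref{eta0}. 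The phase is chosen so that $\partial\phi$ traces $\Gamma$, has a Gaussian factor transversal to $\Gamma\times\set{\eta_0}$ (using subprincipal type and \eqref{cond1} to make the flow well-posed), and its imaginary part picks up a nondegenerate minimum at the chosen sign-change point on $\Gamma$; when the sign change is of infinite order the phase is built as in~\cite{de:sub}. The transport equations for $a_j$ along $H_{\re p_{s,k}}=\partial_t$ are then standard first-order ODEs, solved in Section~\ref{trans}. Throughout, uniformity in $\lambda$ is provided by the technical Lemma~\ref{lemclaim}.

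The main obstacle is solving the eikonal equation uniformly in $\lambda$ while respecting the inhomogeneous scaling: the Taylor expansion of $p_{sub}(x,\partial\phi)$ produces error terms of distinct orders in $\lambda^{-1/k}$ that must be controlled simultaneously. When $\eta_0\ne 0$ the principal contribution to these errors is $\partial_\eta p_{s,k}$, which is bounded by $|p_{s,k}|^{1/k+\varepsilon}$ thanks to condition~\eqref{kcond}; when $k=2$, the quadratic term $\hess p$ must also be controlled, which is exactly what~\eqref{cond2} provides; the condition~\eqref{dqcond} kills the next-order term in the expansion of $q_{s,k}$ at $p_{s,k}^{-1}(0)$, which is indispensable because this term is not invariant under the allowed operator manipulations (cf.\ Remark~\ref{qskrem}). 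The infinite-order sign change introduces an additional difficulty since the amplitude cannot cancel arbitrarily high vanishing terms; this forces the polynomial-weight estimates of Lemma~\ref{lemclaim}, whose proof is the technical heart of the paper and occupies Section~\ref{lempf}.
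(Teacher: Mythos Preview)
Your proposal is correct and follows essentially the same architecture as the paper's proof: normal-form reduction via Proposition~\ref{prepprop}, WKB pseudomodes with the inhomogeneous $\lambda^{-1/k}$ scaling, eikonal equation solved in Sections~\ref{eiksect}--\ref{eta0}, transport equations in Section~\ref{trans}, uniformity through Lemma~\ref{lemclaim}, and the contradiction with the a~priori estimate of Remark~\ref{solvrem}. A couple of minor corrections: the lower bound on $u_\lambda$ is not $\mn{u_\lambda}_{(-M)}\gs 1$ but rather $\mn{u_\lambda}_{(-N)}\gs\lambda^{-N-n/2}$ (Lemma~\ref{estlem}), and the relevant solvability criterion is Lemma~26.4.5 in~\cite{ho:yellow} (via Remark~\ref{solvrem}) rather than Proposition~26.4.4.
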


Examples \ref{Miz}--\ref{CPTgen} show that conditions~\eqref{kcond} and~\eqref{cond2} are essential for the necessity of  $\sub_k({\Psi})$ when $ k= 2$. 
Due to the results of~\cite{de:limit}, condition~\eqref{cond1} is natural if there are no other conditions on the principal symbol, see Example~\ref{etaex}. 
Observe that for effectively hyperbolic operators, which are always solvable, $ \st$ is not an involutive manifold, see Example~\ref{effhyp}.

\begin{rem}\label{thmrem}
It follows from the proof that we don't need condition~\eqref{dqcond} in the case when condition \eqref{cond1} holds on the leaves of $\st $ that intersect the semibicharacteristic.
In the  case when $\eta = 0 $ on the subprincipal semibicharacteristics,  condition~\eqref{cond2} only involves $\hess p $ at $ \st $. This gives a  different result than Theorem 2.7 in~\cite{de:sub}, since in that result  condition~\eqref{cond2} is not used, condition \eqref{cond1} only involves $ p_s$ but we also have conditions on $| dp_s \land d\ol {p}_s | $ and $\hess p $ on $ \st$.
\end{rem}

Now let $S^*X
\subset T^*X$ be the cosphere bundle where $|{\xi}| = 1$, and let
$\mn{u}_{(k)}$ be the $L^2$ Sobolev norm of order $k$ for $u \in C_0^\infty$.
In the following, $P^*$ will be the $L^2$ adjoint of $P$.
To prove Theorem~\ref{mainthm} we shall use the following result.

\begin{rem}\label{solvrem}
If $P$ is microlocally solvable at ${\Gamma}\subset S^*X$,
then Lemma 26.4.5 in~\cite{ho:yellow} gives that for any $Y \Subset
X$ such that ${\Gamma} \subset S^*Y$ there exists an integer ${\nu}$
and a pseudodifferential operator $A$ so that
$\wf(A) \cap {\Gamma} = \emptyset$ and
\begin{equation}\label{solvest}
 \mn {u}_{(-N)} \le C(\mn{P^*{u}}_{({\nu})} + \mn {u}_{(-N-n)} +
 \mn{Au}_{(0)}) \qquad u \in C_0^\infty(Y)
\end{equation}
where~$N$ is given by Definition~\ref{microsolv}.
\end{rem}

We shall prove Theorem~\ref{mainthm} in Sect.~\ref{pfsect} by
constructing localized  
approximate solutions to $P^*u \cong 0$ and
use~\eqref{solvest} to show that $P$ is 
not microlocally solvable at~${\Gamma}$.

\section{Examples}\label{exe}

\begin{exe}\label{Miz}
Consider the operator
\begin{equation}\label{Mizeq}
P = D_t + i a(t)\Delta_y  \qquad (x,y) \in  \br^{n-d}  \times \br^{d}  
\end{equation}
where $0 < d < n $, $a(t) $ is real and has a sign change from $ -$ to $ +$.
This operator is equal to the Mizohata operator when $a(t) = t$.
We find that $P$ is of subprincipal type,  $k = 2$ and $ p_{s,2}(t,\tau,\eta) = \tau + i a(t)|\eta |^2 $ is constant on the leaves of $\st = \set {\eta = 0} $. Condition  \eqref{cond1} hold but $ \sub_2 (\Psi)$ does not hold since $t\mapsto a(t) |\eta|^2$ changes sign  from $ - $ to $ + $ when $\eta \ne 0$. 
Since $ |\partial_{\eta} p_{s,2}| \cong \mn{\hess p_{s,2}} \cong |a(t)|$ when $\eta \ne 0$ and 
$p_{s,2} $ is independent of $(x,\xi) $ we find that conditions~\eqref{kcond} and \eqref{cond2} hold. 
Theorem~\ref{mainthm} gives that $P$ is not locally solvable. 
\end{exe}

\begin{exe}\label{CPT}
The operator 
\begin{equation}\label{CPTeq}
P = D_t + i(D_{x_1}D_{x_2} + t D_{x_2}^2) \qquad x \in \br^{n} \quad n \ge 3
\end{equation}
is solvable, see \cite{CPT}. 
We find that $P$ is of subprincipal type, $k = 2$, $\st = \set {\xi_1 = \xi_2 = 0} $ and $ p_{s,2}(t,\tau,\xi) = \tau + i(\xi_1\xi_2 + t \xi_2^2)$. Condition $ \sub_2 (\Psi) $ does not hold since $ t \mapsto \xi_1\xi_2 + t \xi_2^2$ changes sign  from $ - $ to $ + $ when $ \xi_1 = - t\xi_2 $ and  $\xi_2 \ne 0$.        
Since $ |\partial_{\xi} p_{s,2}| \cong \mn{\hess p_{s,2}} \cong 1 \gg |p_{s,2}|\cong |t|$ when  $\xi_2 \ne 0$ and  $\tau = \xi_1 = 0 $, we find that conditions~\eqref{kcond} and \eqref{cond2} do not hold. 
\end{exe}

\begin{exe}\label{CPTgen}
Consider the following generalization of Example~\ref{CPT} given by
\begin{equation}\label{CPTgeneq}
P = D_t + i(D_{x_1}D_{x_2} + t^{2j + 1} D_{x_2}^2) + i (2j^2 + j)  t^{2j - 1} x_1^2 \qquad x \in \br^{n} 
\end{equation}
for $ j > 0$ and $  n \ge 3$. We find that $P$ is of subprincipal type, $k = 2$, $\st = \set {\xi_1 = \xi_2 = 0} $ and $ p_{s,2}(t,\tau,\xi) = \tau + i(\xi_1\xi_2 + t^{2j + 1} \xi_2^2)$. Thus $ \sub_2 (\Psi) $ does not hold since $ t \mapsto \xi_1\xi_2 + t^{2j + 1} \xi_2^2$ changes sign  from $ - $ to $ + $ when $\xi_1 = - t^{2j + 1}\xi_2 $ and  $\xi_2 \ne 0$.        
Since $ |\partial_{\xi} p_{s,2}|  \cong \mn{\hess p_{s,2}}  \cong 1 $ and $ |p_{s,2}|\cong |t|^{2j + 1}$ when $\tau = \xi_1 = 0 $ and $\xi_2 \ne 0 $, we find that conditions~\eqref{kcond} and \eqref{cond2} do not hold. 
By choosing $x_2 -t^{2j + 1} x_1 $ as new $ x_2$ coordinate we obtain that 
\begin{equation}
P = D_t + i\big (D_{x_1} + i (2j + 1) t^{2j}x_1 \big )D_{x_2}  + i (2j^2 + j)  t^{2j - 1} x_1^2
\end{equation}
Then by conjugating $ P$ with $e^{(2j + 1) t^{2j}x_1^2/2} $ we obtain $ P  = D_t + iD_{x_1}D_{x_2}$ which has constant coefficients and is solvable. 
\end{exe}

\begin{exe}\label{etaex}
Consider the operator
\begin{equation}
P = D_t + f(t,y,D_x) + i\, \Box_y  \qquad (x,y) \in \br^{n-2} \times \br^2
\end{equation}
where  $f(t,y,\xi)\in S^1_{\mathrm{hom}} $ is real and $\Box_y= \partial_{y_1}\partial_{y_2}$ is the wave operator in $ y \in \br^2$.
We find that $P$ is of subprincipal type, $k = 2$, $\st  = \set {\eta = 0} $ and $ p_{s,2}(t,y,\tau,\xi,\eta ) = \tau + f(t,y,\xi)  -i \eta_1   \eta_2$ so   \eqref{cond1} is not satisfied if $\partial_{y}f \ne 0 $. 
Since $-iP =  \Box_y - i D_t -i f(t,y,D_x)$ it follows from Theorem 1.2 in \cite{MU1} that 
$ P$ is not solvable if $\partial_{y}f \ne 0 $.
\end{exe}

\begin{exe}
Consider the operator
\begin{equation}
P = D_t + i f(t,x,D_x) + B(t,x,D_y)  \qquad (x,y) \in \br^{n-d} \times \br^d
\end{equation}
where  $0 < d < n $, $f(t,x,\xi) \in S^1_{\mathrm{hom}} $ is real and $ B(t,x,\eta) \in S^2_{\mathrm{hom}}$ vanishes of degree $ k \ge 2$ at $ \st = \set {\eta = 0}$. Then $p_{s,k} = \tau +if(t,x,\xi) + B_k(t,x,\eta) $ where $ B_k$ is the $ k$:th Taylor term at $ \st $ of the principal symbol of $ B$, so \eqref{cond1} is satisfied everywhere. 

Assume that $B(t,\eta)$ is independent of $ x$ and the sign change in $t \mapsto f (t,x,\xi) +\im B_k(t,\eta) $ is from $ -$ to $ +$ of order $\ell < \infty $ at $ t = t_0$. If $t \mapsto \partial_\eta B_k(t,\eta) $ vanishes of order greater than~$\ell/k$ at $ t = t_0$ then~\eqref{kcond} holds. If $ k = 2$ and $t \mapsto \partial_\eta^2 B_k(t,\eta) $ vanishes  at $ t = t_0$
then~\eqref{cond2} holds. Then $P$ is not solvable by Theorem~\ref{mainthm} and Remark~\ref{thmrem}. 

If  $ \im B(x,\eta)  \ne 0$ is constant in $ t$ and $k $ is odd  with  $\im B_k(x,\eta) \gtrless  0 $, $ \forall \, x$, then condition $\sub_k(\Psi) $ implies that $t \mapsto f(t,x,\xi)  $ is nonincreasing. In fact, Sard's theorem gives for almost all values $f_0 $ of  $ f$ that there exists $(t,x,\xi) $ so that $ f(t,x,\xi) = f_0$ and $ \partial_t f(t,x,\xi) \ne 0$. Then one can choose $\eta$ so that $f(t,x,\xi) +  \im B_k(x,\eta) = 0$  so $\sub_k(\Psi) $  gives  $\partial_t f(t,x,\xi) \le 0 $. 

If  $t \mapsto f(t,x,\xi)  $ is nonincreasing,  $ B(x,\eta)$ is constant in $ t$ and $\re B \equiv 0 $,  then  $ P$ is solvable. In fact, then $ [P^*, P]= 2i [\re P, \im P] = 2\partial_t f \le 0 $ so $ \mn{\re P u}^2 \ls \mn{Pu}^2+ \mn{P^*u}^2 \ls \mn{P^*u}^2 + \mn{u}^2$ and  $\mn{u} \ll \mn{\re P u}$ if $|t| \ll 1 $ in the support of $u \in C^\infty_0 $.
\end{exe}

\begin{exe}\label{LNS}
The linearized Navier-Stokes equation 
\begin{equation}
\partial_t u + \sum_j a_j(t,x) \partial_{x_j}u + \Delta_x u = f   \qquad a_j(x) \in C^\infty
\end{equation}
is of subprincipal type. The symbol is
\begin{equation}
i\tau + i\sum_j a_j(t,x)\xi_j -  | \xi |^2
\end{equation}
so $P$ is of subprincipal type,  $k = 2$, ${\Sigma}_2 = \set{{\xi} = 0}$ and $ p_{s,2}(\tau,\xi) = i\tau - |\xi |^2 $. Thus $ \sub_2 (\Psi) $ holds since $ -|\xi|^2$ does not change sign when $t$ changes. 
\end{exe}

\begin{exe}\label{effhyp}
Effectively hyperbolic operators $P$ are weakly hyperbolic operators for which the fundamental matrix $F$ has two real eigenvalues, here $ F = \Cal J \hess p\restr \st $ with $p = \sigma(P)$ and $\Cal J(x,\xi) = (\xi, -x)$ is the symplectic involution. Then  $P$ is solvable for any subprincipal symbol by (see \cite{Mel} and \cite{Nishi}) but in this case $ \st $ is \emph{not} an involutive manifold.
\end{exe}

\section{The normal form} \label{normform}

We are going to prepare the operator microlocally near the semibicharacteristic.
We have assumed that $P^*$ has the symbol expansion $p_m +
p_{m-1} + \dots$ where $p_j \in S^j_{\mathrm{hom}}$ is homogeneous of degree~$j$. By
multiplying  $P^*$ with an elliptic classical pseudodifferential operator, we may
assume that $m=2$ and $p = p_2 $. 
By chosing local homogeneous symplectic coordinates $(x,y;{\xi},{\eta})$ we may assume that $X = \br ^n $ and $\st =
\set{{\eta} = 0} \subset T^* \br ^n \setminus 0$ with the symplectic foliation by leaves spanned by $\partial_y$.
If $p$ vanishes of  order $ k < \infty$ at $\omega \subset \st$ we find that 
\begin{equation}\label{p2fact}
 p(x,y;{\xi},{\eta}) = \sum_{|\alpha| = k}
 B_{\alpha}(x,y;{\xi},{\eta}) {\eta}^\alpha/\alpha! \qquad (x,y,\xi) \in \omega 
\end{equation}
where $B_{\alpha}$ is homogeneous of degree $2-k$, and ${B_{\alpha}(x,y;{\xi},0)} \not \equiv 0$ for some $|\alpha| = k $ and some  $(x,y,{\xi},0) \in \omega$.
When $p$ vanishes of  infinite order we get~\eqref{p2fact} for any $ k$.

We shall first consider the case when $k = \kappa(\omega) < \infty $. Recall the reduced subprincipal symbol 
$p_{s,k}(w,\eta) = J^k_w(p)(\eta) + p_s(w)  $, $w \in \st $, by Definition~\ref{redsub}, and the extended subprincipal symbol 
$q_{s,k}(w,\eta,\lambda)  =  \lambda J^{2k-1}_w(p)(\eta/ \lambda^{1/k} ) + 
J^{k-1}_w(p_s)(\eta/ \lambda^{1/k}) $ by Definition~\ref{defqsk}.
Observe that these are invariantly defined and are the complex conjugates of the corresponding symbols of~$P$ by  Remark~\ref{detarem}.
We also find from Remark~\ref{detarem}  that
\begin{multline}\label{psubdef}
p_{sub} (x,y;\xi,  \eta/ |\xi|^{{1}/{k}}) 
\cong |\xi| q_{s,k}(x,y; \xi_0, \eta_0, |\xi|) \\ =  |\xi| p_{s,k}(x,y; \xi_0, \eta_0) +  \Cal O (|\xi|^{1-1/k}) \qquad \text{modulo $\Cal O(1)$}
\end{multline} 
where $(\xi_0,\eta_0) =   |\xi| ^{-1}(\xi,\eta)$. We also have
\begin{equation}
\partial_\eta p_{sub} (x,y; \xi, \eta/ |\xi|^{{1}/{k}}) 
 \cong  |\xi|^{1/k} \partial_\eta p_{s,k}(x,y; \xi_0, \eta_0)  \qquad \text{modulo $\Cal O (1)$}
\end{equation} 
and
\begin{equation}
\partial_\eta^2 p_{sub}(x,y; \xi,\eta/ |\xi|^{{1}/{k}}) \\  \cong  |\xi|^{{2}/{k} -1} \partial_\eta^2 p_{s,k}(x,y;\xi_0,\eta_0) \qquad \text{modulo $\Cal O ( |\xi|^{{1}/{k} - 1})$}
\end{equation}
When $k < \infty $ we shall localize with respect to the metric
\begin{equation}\label{gdef}
g_k(dx,dy; d\xi, d\eta) = |dx|^2+|dy|^2 +
|d{\xi}|^2/\Lambda^2 + |d{\eta}|^2/\Lambda^{2- {2}/{k}}
\end{equation}
where  $\Lambda = ( |\xi |^2 + 1 )^{ 1/2}$.
If $g_{\varrho, \delta}$ is the metric corresponding to the symbol classes $S^m_{\varrho, \delta} $  we find that 
\begin{equation*}
g_{1,0} \le g_k \le g_{1 -{1}/{k},0}
\end{equation*}
When $k = \infty $ we shall let $g_\infty =  g_{1,0}$ which is the limit metric when $k \to \infty $.

We shall use the Weyl calculus symbol notation $ S(m,g_k) $ where $m$ is a weight for $g_k$, one example is  $\Lambda^m = ( |\xi |^2 + 1 )^{ m/2}$. Observe that we have the usual asymptotic expansion when composing $ S(\Lambda^m, g_k) $ with $ S^j_{\varrho,\delta} $ when $\varrho > 0 $ and $ \delta < 1 -\frac{1}{k}$.

\begin{rem}
If $k < \infty $, $f $ is homogeneous of degree $m$ and vanishes of order $j$ at $\st $ then $f \in  S(\Lambda^{m - {j}/{k}}, g_k)$ when $|\eta| \ls |\xi| ^{1 -{1}/{k}}$.
\end{rem} 

One example is $ p = \sigma(P^*) \in S(\Lambda, g_k)$ in $ \omega$  when $|\eta| \ls |\xi| ^{1 - {1}/{k}}$ for $k = \kappa(\omega) $. 
In fact, when $|\eta| \ls |\xi| ^{1 -{1}/{k}}$ we have $|f| \ls |\xi|^{m-j}|\eta|^{j} \ls |\xi|^{m - {j}/{k}} $. 
Differentiation in $x$ or $y$ does not change this estimate, differentiation in $\xi$  lowers the homogenity by one and when taking derivatives in $\eta$ we may lose a factor $\eta_j = \Cal O(|\xi| ^{1 - {1}/{k}})$. We shall prepare the symbol in 
domains of the type
\begin{equation}\label{wtgamma}
\wt \Omega = \set {(x,y, \lambda \xi, \lambda^{1-  {1}/{k}} \eta ): (x,y,\xi,\eta) \in \Omega  \subset S^*\br^n, \quad \lambda > 0}
\end{equation}
which is a $g_k$ neighborhood consisting of the inhomogeneous rays through $\Omega$.

Now for $k < \infty $ we shall use the blowup mapping
\begin{equation}\label{blowup}
\chi: \ (x,y; \xi, \eta) \mapsto (x,y; \xi, \eta/|\xi |^{{1}/{k}} ) 
\end{equation}
which is a bijection when $ |\xi| \ne 0 $. The pullback by $ \chi$ maps symbols in $ S(\Lambda^m, g_k) $ where  $|\eta| \ls |\xi |^{1- {1}/{k}} $ 
to symbols in $S^m_{1,0} $ where  $|\eta| \ls |\xi | $, see for example~\eqref{psubdef}. Also Taylor expansions  in $ \eta$ where $|\eta| \ls |\xi |^{1- {1}/{k}} $ get mapped by~$\chi^* $ to polyhomogeneous expansions, and  a conical neighborhood $\Omega$ is mapped by $ \chi $ to the $g_k$ neighborhood $\wt \Omega $.

The blowup 
\begin{multline}\label{qdef}
p_{sub} \circ \chi(x,y;\xi,\eta) =  q(x,y;\xi,\eta) =  |\xi|q_{s,k}(x,y; \xi/|\xi|, \eta/|\xi|, |\xi|) \\ \cong  |\xi|p_{s,k}(x,y; \xi/|\xi|, \eta/|\xi|)\in S^1_{1,0} \qquad \text{modulo $ S^{1 - {1}/{k}}_{1,0}$}
\end{multline} 
is a sum of terms homogeneous of degree $ 1 - j/k$ 
for $j \ge 0 $ by Definition~\ref{defqsk}. We shall prepare the blowup  $ q_{s,k}$ and get it on a normal form after multiplication with pseudodifferential operators and conjugation with elliptic Fourier integral operators.

We have assumed that $P$ is of subprincipal type and does not satisfy condition $\sub_k({\Psi})$   near a subprincipal semicharacteristic  ${\Gamma}\times \set {\eta_0} \subset N^*\st$, which is transversal to the leaves of $N^*\st $.  
By changing $\Gamma $ and $ \eta_{0}$ we may obtain that  $\im a p_{s,k}$ changes
sign from $+$ to $-$ on the bicharacteristic~${\Gamma}\times \set {\eta_0} $ of ${\re
	a p_{s,k}}$ for some  $0 \ne a \in C^\infty$. 
The differential inequality~\eqref{cond1} in these coordinates means that
\begin{equation}\label{cond1q}
|\partial_{y}q_{s,k}| \le C|q_{s,k}|
\qquad \text{when $|\xi | \gg 1$ }
\end{equation}
in a conical neighborhood $\omega $ in $ N^*\st $  containing $ {\Gamma}\times \set {\eta_0} $.
By shrinking $\omega $ we may obtain that the intersections of  $\omega$ and the leaves of $ \st$ are simply connected.
Then by putting $|\xi | = \lambda $ we obtain from Remark~\ref{qskrem} that 
\begin{equation} \label{qcq}
\wt   q_{s,k}(x,{\xi},\eta) \cong
 c(x,y,{\xi},\eta) q_{s,k}(x,y,{\xi},\eta) \qquad \text{at $\omega$ when $| \xi| \gg 1$ }
\end{equation}
modulo  $S^0_{1,0} $. 
Here $\wt  q_{s,k}$ is the value of $ q_{s,k}$ at the intersection of the semibicharacteristic and the leaf.
Here $0 \ne  c \in S^0_{1,0}$ is a sum of terms homogeneous of degree $ - j/k$ for $j \ge 0$ such that $|c| > 0$ when $| \xi| \gg 1$. In fact, $ c$ has an expansion in $\eta/|\xi|^{1/k} $ and it suffices to take terms up to order $ k$ in $c $ to get~\eqref{qcq} modulo  $S^0_{1,0} $.
Thus the term homogeneous of degree 0 in $ c$ is nonvanishing in the conical neighborhood $ \omega$. By cutting off the coefficients of the lower order terms of $ c$ where $|\xi| \gg 1 $, we may assume that $ c \ne 0$ in $ \omega$.

By multiplying $P$ with a pseudodifferential operator with symbol $C = c \, \circ \chi^{-1} \in S(1, g_k)$  when  $|\eta| \ls |\xi | ^{1 - {1}/{k}}$, we obtain by Remark~\ref{qskrem} the refined principal symbol 
\begin{equation}
\wt p_{sub} + \frac{1}{2i}\partial_y C \partial_\eta p_{sub} \qquad \text{modulo $S(1,g_k)$ in  $\chi (\omega) $}
\end{equation}
for $\wt P = CP $. Here $\wt p_{sub} = C p_{sub} $ is constant on the leaves of $N^* \st$ modulo $S(1,g_k)$ in  $\chi (\omega) $. We have that  
\begin{equation*}
\partial_\eta p_{sub} = C^{-1}\partial_\eta \wt  p_{sub} + \partial_\eta  C^{-1} \wt p_{sub} 
\end{equation*}
where $\partial_\eta  C^{-1} \in  S(\Lambda^{ {1}/{k} -1},g_k)$ when  $|\eta| \ls |\xi | ^{1 -{1}/{k}} $. Thus by multiplying $\wt  P$ with a  pseudodifferential operator with symbol $1 - \frac{1}{2i} \partial_\eta  C^{-1}\partial_y  C \in S(1,g_k)$, we obtain the refined principal symbol 
\begin{equation}\label{rps}
\wt p_{sub}  + c_0 \partial_\eta \wt p_{sub}\qquad \text{modulo $S(1,g_k)$ in  $\chi (\omega) $}
\end{equation} 
for some $c_0 \in S(1,g_k) $ which may depend on $ y$.
Then we find that \begin{equation*}
\partial_y (\wt p_{sub}  + c_0 \partial_\eta \wt p_{sub})  \cong  \partial_y c_0 \partial_\eta \wt p_{sub}  \qquad \text{modulo $S(1,g_k)$}
\end{equation*} 
By putting $q =\wt  p_{sub} \circ \chi $ obtain that $\partial_y q = 0 $ in $\omega$ when $| \xi| \gg 1$. We shall control the term proportional to $ \partial_\eta \wt p_{sub}\circ \chi =  |\xi|^{1/k} \partial_\eta q \in  S^{{1}/{k}}$ by using condition~\eqref{kcond}, see Lemma~\ref{lemclaim}. Observe that $q \cong q_1 = p_{s,k}  \circ \chi$ modulo $ S^{1- {1}/{k}}$, which is homogeneous and independent of $y $ near $ \omega$. By the invariance of the condition, we may assume that $ a$ is independent of~$y$. Then the semibicharacteristics are constant in $ \eta$ so we may choose $ a$  independent of~$(y,\eta) $.

Observe that changing $ a$ changes $ \Gamma$  and $ \eta_0$ by the invariance, but we may assume that $ \Gamma\times \set {\eta_0}$ is arbitrarily close to the original semibicharacteristic by \cite[Theorem~26.4.12]{ho:yellow}.  Since $\im aq_1$ changes sign on ${\Gamma}\times \set {\eta_0}$ there is a maximal semibicharacteristic ${\Gamma}' \times \set {\eta_0}$ on which $\im aq_1 = 0$ and because of the sign change we may shrink ${\Gamma}$ so that
it is not a closed curve. Here
${\Gamma}'$ could be a point, which is always the case if the sign
change is of finite order. By continuity, $\partial_{x,\xi} \re a q_1 \ne 0$ near $\Gamma'\times \set {\eta}$ for $\eta $ close to $\eta_0$  and we may extend $ a$ to a nonvanishing symbol that is  homogeneous of degree~0 near $\Gamma $.
Multiplying $ P$ with an elliptic  pseudodifferential operator with symbol $a = a \circ \chi^{-1}$  we may assume that $ a \equiv 1$.

Recall that conditions  \eqref{kcond}  (and \eqref{cond2} when $k = 2 $) holds in some
neighborhood of~${\Gamma} \times \set {\eta_0}$  with the gliding foliation $ M$ of $N^*\st  $. 
By using Darboux' theorem we can choose local coordinate functions $ (x,\xi)$ such that $ TM$ is spanned by $\partial_x $ and $\partial_\xi $ for the leaves of~$ M$. 
Now $0 \ne H_{\re q_1} $ is tangent to $\Gamma' \times \set {\eta_0}$, transversal to the symplectic foliation of $ \st $, constant in $ y$ and in the symplectic annihilator of $ TM$. 
Since $TM $ is symplectic, this gives that $\re q_1 $ and $\eta $ are constant on the leaves $ M$.
Now take  ${\tau}= \re  q_1$ when $ {\eta = \eta_0}$ and extend it is so that $\tau $ is independent of $\eta $.
Then we can complete ${\tau}$, $ y$ and  $ \eta$ to a homogeneous symplectic coordinate system $(t,x,y;\tau,\xi,\eta)$ in a conical neighborhood $\omega$ of ${\Gamma}' $ in $\st $
so that  $ (x,\xi)\restr {\eta = \eta_0}$ is preserved.
Since the change of variables preserves the $ (y,\eta)$ variables, it  preserves  $ \st = \set {\eta = 0}$ and its symplectic  foliation and the fact that  $\partial_y q = \set {\eta, q } = 0 $.  When $\eta = \eta_0 $  we have that  $\re q_1 = \tau $ and the  leaves $TM $  of the foliation $M  $ is spanned by $ \partial_x = H_\xi$ and $ \partial_\xi = - H_x$ modulo $\partial_y $ when $\eta = \eta_0 $. Since $q $ is independent of $y $ we may assume that  $ V_\ell  $ is in the span of $ \partial_{x,\xi}$ in  \eqref{kcond} and \eqref{cond2}.
Since the $ \eta$ variables are preserved, the blowup map $\chi $ and the inhomogeneous rays are preserved and the  coordinate change is an isometry with respect to  the metric $ g_k$.

By conjugating with elliptic Fourier integral operators in the variables $(t,x) $ independently of  $y $ microlocally  near $\Gamma' \subset  \st $, we obtain that $\re q_1 = \tau $ in a conical neighborhood of~${\Gamma} $ when $\eta = {\eta_0}$. This gives  
$ q_1 = \tau + \varrho(t,x,\tau, \xi, \eta) $ in a neighborhood of $\Gamma' \times \set{\eta_0} $, where $\varrho$ is  homogeneous and $\re  \varrho \equiv 0$ when $\eta = \eta_0 $. Since this is a change of symplectic variables $(t,x;\tau,\xi)$ for fixed $(y,\eta) $ we find by the invariance that $t \mapsto \im \varrho(t,x,0, \xi,\eta_0) $ changes sign from $ +$ to $- $ near $ \Gamma'$.
Observe that the reduced principal symbol is invariant under the conjugation  by Remark \ref{invrem}  so
the condition that $\partial _y q =   \set { \eta, q} = 0 $ is preserved, but we may also have a term $c\partial_\eta q \in S^{1/k}$ where $ c$ could depend on $ y$.

Next, we shall use the Malgrange preparation theorem on $ q_1$.
Since $ \partial_\tau q_1 \ne 0$ near  $ \Gamma' \times \set {\eta_0}$ we obtain that
\begin{equation}\label{Malprep}
\tau = c(t,x,\tau,\xi,\eta)q_1(t,x,\tau,\xi,\eta) + r(t,x,\xi,\eta)
\end{equation}
locally for $\eta $ close to $\eta_0$  when $|\xi| = 1 $, and by a partition of unity near $ \Gamma' \times \set {\eta_0}$, which can be extended by  homogeneity so that $ c$ is homogeneous of degree 0 and $ r$ is homogeneous of degree 1. 
Observe that this gives that $\partial_\eta q_1 =  \partial_\eta c^{-1}(\tau - r) - c^{-1}\partial_\eta r$ by \eqref{Malprep}.
By taking the $ \tau$ derivative of~\eqref{Malprep}  using that $ q_1 = 0$ and  $ \partial_\tau q_1 = 1$ at $ \Gamma' \times \set {\eta_0}$ we obtain that   $c = 1$ on $  \Gamma' \times \set {\eta_0}$.
Multiplying the operator $ P^*$ with a pseudodifferential operator with symbol $c \circ \chi^{-1} \in S(1,g_k)$ when $|\eta| \ls |\xi|^{1 - {1}/{k}} $ we obtain that 
$q_1(t,x,\tau,\xi,\eta) = \tau - r(t,x,\xi,\eta)$ in  a conical neighborhood of  $ \Gamma' \times \set {\eta_0}$.

Writing $r = r_1 + i r_2$ with $r_j$ real, we may complete $\tau - r_1(t,x,\xi, \eta_0)$,   $t$,   $ y$ and  $\eta$  to a homogeneous  symplectic coordinate system $(t,x,y;\tau,\xi,\eta)$  near $\Gamma' \times \set {\eta_0}$ so that $(x,\xi)\restr {t=0} $ is preserved. 
This is a  change of coordinates in $(t,x;\tau, \xi) $ which as before is independent of the variables $(y,\eta) $.
We find that $\partial_\tau r = \set {r, t} = 0$  and $\partial_y r = \set {\eta, r} = 0 $ are preserved and $\re r \restr {\eta = \eta_0} \equiv 0 $. By the invariance we find that $t \mapsto  r_2(t,x, \xi,\eta_0) $ changes sign from $ +$ to $- $ near $ \Gamma'$. As before,  the blowup map $ \chi$ and the inhomogeneous rays are preserved and the  coordinate change is an isometry with respect to  the metric $ g_k$.

By conjugating with elliptic Fourier integral operators in $ (t,x)$ which are constant in $y$ microlocally near $\Gamma' \subset  \st $, the calculus gives as before that 
\begin{equation}\label{q1f}
q_1(t,x,\tau,\xi, \eta) = \tau + f(t,x,\xi, \eta) 
\end{equation}
where  $f  = - r$. 
When $\eta =  {\eta_0} $ we have $\re f \equiv 0 $ and $\im f $ has a sign change from $ +$ to $ -$ as $ t$ increases near $\Gamma' $  by the invariance of condition  $({\Psi})$.
In fact,  the reduced principal symbol is invariant under the conjugation 
so the condition that $\partial _y q =   \set { \eta, q} = 0 $ is preserved, but we may also have a term $c\partial_\eta q \in S^{1/k}$ where $ c$ could depend on~$ y$.
Observe that $\tau $ is invariant under the blowup mapping $ \chi $  given by~\eqref{blowup}.
By the invariance, we find that 
\eqref{kcond}  (and \eqref{cond2} if $k = 2 $)  holds for $q_1$  with $TM $ spanned by $\partial_x $ and $ \partial_\xi $ when  $\eta =  {\eta_0} $. In fact, since $y $, $\eta$ and $t$  are independent of $(x,\xi) $ we find that the span of $\partial_{x,\xi} $ is invariant modulo terms proportional to $\partial_\tau $. Since $ \partial_\eta q_1$ is independent of $ \tau$ by~\eqref{q1f}  we may take $V_j $ in the span of $\partial_{x,\xi}$  in  \eqref{kcond} (and \eqref{cond2} if $ k= 2$) when $\eta =  {\eta_0} $.
Thus, by putting $ \tau = - \re f$ in \eqref{kcond} 
we obtain that there exists $\varepsilon > 0 $ so that
\begin{equation}\label{deta0}
|\partial_{x,\xi}^\alpha \partial_{{\eta}} f|  \ls | \im f|^{{1}/{k} + \varepsilon} \qquad \forall\, \alpha \quad |\eta - \eta_0| < c
\end{equation}
near $ \Gamma'$ in $ \st$. 
Observe that on $\Gamma' $,  where $ \im f$ vanishes, \eqref{deta0} gives that $ \partial_{x,\xi}^\alpha \partial_{{\eta}} f$ vanishes $  \forall\, \alpha$.
Similarly, it follows from~\eqref{cond2} 
that  there exist  $\varepsilon > 0 $ so that
\begin{equation}\label{ddeta0}
|\partial_{x,\xi}^\alpha \partial_{{\eta}}^2 f|  \ls | \im f|^{\varepsilon}  \qquad \forall\, \alpha 
\end{equation}
near $ \Gamma'$ in $ \st$. As before,  \eqref{ddeta0} gives that $ \partial_{x,\xi}^\alpha \partial_{{\eta}}^2 f $ vanishes $  \forall\, \alpha$, when $ \im f$ vanishes. 

We shall next consider  on the lower order terms in the expansion $q = q_1 + q_2 + \dots $ where $q_j \in S^{1- ({j-1})/{k}}$. Observe that $\partial_{{\eta}} q_2 = 0$ when $ q_1= 0$ by condition \eqref{dqcond}. (Actually, since $dq_1 \wedge d\ol q_1 = 2i df \wedge d\tau $ it suffices that this holds when $ f$ vanishes of infinite order.) 
We shall use the  Malgrange preparation theorem on $q_j $, $  j \ge 2 $, in a conical neighborhood of  $ \Gamma' \times \set {\eta_0}$. Since $ \partial_\tau q_1 \ne 0$ we obtain
\begin{equation}\label{Maldiv}
q_j(t,x,\tau,\xi,\eta)= c_j(t,x,\tau,\xi,\eta)q_1(t,x,\tau,\xi,\eta) + r_j(t,x,\xi,\eta) \qquad j \ge 2
\end{equation}
locally for $\eta $ close to $\eta_0$  when $|\xi| = 1 $, and by a partition of unity near $ \Gamma' \times \set {\eta_0}$. This can be extended to a  conical neighborhood of $\Gamma' \times \set {\eta_0}$ so that $r_j \in S^{1- ({j-1})/{k}}$ is independent of  $\tau$ and $ c_j \in S^{-({j-1})/{k}}$. Multiplying the operator $ P^*$ with a pseudodifferential operator with symbol $1 - c_j \circ \chi^{-1} \in S(1,g_k)$ where  $ c_j \circ \chi^{-1} \in S( \Lambda^{-({j-1})/{k}} ,g_k)$ when  $|\eta| \ls |\xi|^{1 - {1}/{k}} $ we obtain that $q_j(t,x,\tau,\xi,\eta) = r_j(t,x,\xi,\eta)$. Since $q_2 $ is  now independent of $ \tau$ we find by putting $\tau = - \re f $ that  $\partial_{{\eta}} q_2 = 0$ when $ \im f = 0$ (of infinite order) by condition \eqref{dqcond}.

When $ j = k$ then we find from~\eqref{rps} and~\eqref{Malprep} that $q_k \in S^{1/{k}}$ also contains the term $c_0 \partial_{{\eta}} c^{-1}(\tau + f) +  c_0 c^{-1}\partial_{{\eta}} f$ modulo $ S^0$,  where $c^{-1} \in S^0 $  and  $ c_0  \in S^{1/{k}}$ may depend on~$ y$. 
By using \eqref{Maldiv} and multiplying the operator $ P^*$ with a pseudodifferential operator with symbol $1 - (c_k + c_0\partial_{{\eta}}  c^{-1}) \circ \chi^{-1} \in S(1,g_k) $ when  $|\eta| \ls |\xi|^{1 - {1}/{k}} $ we obtain that $q_k = r_k + c_0c^{-1} \partial_\eta f$, where   $ c_0  \in S^{1/{k}}$   may depend on $ y$.

This preparation can be done for all lower order terms of pullback of the full symbol of $P^*$ given by $\sigma(P^*) \circ  \chi$, where the terms are in $ S^{-{j}/{k}}$ for $j \ge 0$. These terms may depend on~$y$, but that does not change the already prepared terms since  $c_j \in S^{-1- {j}/{k}}$ in~\eqref{Maldiv}. 
We shall cut off in a $g_k $ neighborhood of the bicharacteristic and then we have to  measure the error terms of the preparation.

\begin{defn}\label{wfdef}
In the case $k < \infty $ and  $R \in {\Psi}^{\mu}_{{\varrho},{\delta}}$  where $\varrho + \delta \ge 1 $, ${\varrho} > 0$ and ${\delta} < 1 - \frac{1}{k}$
we say that $T^*X \ni
(t_0,x_0,y_0;\tau_0, {\xi}_0,\eta_0) \notin \wf_{g_k} (R)$ if the 
symbol of $R$ is $\Cal O(|{\xi}|^{-N})$, $\forall\, N$, when
the $g_k$ distance to the inhomogeneous ray 
$\set{(t_0,x_0,y_0;\varrho\tau_0,\varrho {\xi}_0,\varrho^{1 - {1}/{k}}\eta_0):\
{\varrho} \in \br_+}$ is less than $c > 0$.
If $ k = \infty$ and  $R \in {\Psi}^{\mu}_{{\varrho},{\delta}}$, for ${\varrho} > 0$ and ${\delta} < 1 $, then $\wf_{g_k} (R) = \wf (R)$.
\end{defn}

For example, $(t_0,x_0,y_0;\tau_0, {\xi}_0,\eta_0) \notin \wf_{g_k} \big (R(D) \big ) $ if $ R$ is the
cutoff  function  
\begin{equation*}
R(\tau, \xi,\eta) = 1- \chi\big (c |(\tau, \xi)|^{{1}/{k} - 1} (\eta - \eta_0)\big )\in S(1, g_k)
\end{equation*}
with $\chi \in C^\infty_0 $ such that  $0 \notin \supp (1 - \chi) $ and $ c > 0$.
By the calculus, Definition \ref{wfdef} means that there exists $A\in
S(1,g_k)$ so that $A \ge c > 0$ in a $g_k$
neighborhood of the inhomogeneous ray such that $AR \in {\Psi}^{-N}$ for any $N$.
By the conditions on ${\varrho}$ and ${\delta}$, 
it  follows from the calculus that Definition  \ref{wfdef}  is invariant
under composition with classical elliptic pseudodifferential operators and under conjugation
with elliptic homogeneous Fourier integral operators  preserving the fiber and 
$\st = \set {\eta = 0}$. 
We also have that $\wf_{g_k}(R) $ grows when $k $ increases and $\wf_{g_k}(R) \subseteq \wf (R) $,  with equality when $k = \infty $.

Cutting off where $|\eta - \eta_0| \ls |\xi |^{1- {1}/{k}} $ we obtain that
\begin{equation}
P^* = D_t + F_1(t,x,y,D_x,D_y) +  F_0(t,x,y, D_x,D_y) + R(t,x,y, D_x,D_y) 
\end{equation}
where $ R\in S(\Lambda^{2},g_k)$ such that $ \Gamma'  \times \set {\eta_0} \bigcap \wf_{g_k}(R) =  \emptyset$, $F_j \in S(\Lambda^{j},g_k)$ such that 
\begin{equation}
F_1\circ \chi (t,x,y, \xi,\eta)  \cong  f(t,x,\xi, \eta)  + r(t,x,\xi, \eta)\in S^1
\end{equation} 
modulo $S^{1- {2}/{k}}$, where $r \in  S^{1- {1}/{k}}$ and $\partial_{{\eta}} r = 0 $ when $ \im f$ vanishes (of infinite order). Also there exists $ c \in  S( 1, g_k)$ so that $F_1 - c \partial_{{\eta}}F_1$ is constant in $ y$ modulo $S(1,g_k) $.

Next, we study the case when $k = \infty $. We have $q_{s,\infty} = p_s $, $p_{s,\infty} = p_s \restr \st  $ and $g_\infty = g_{1,0} $. Then we shall not prepare the principal symbol, which vanishes of infinite order at $ \st $. Instead, we shall prepare the lower order terms starting with $p_1 $, which is homogeneous of degree~1. 
We shall prepare $p_1 $ in a similar way as~$q $ near the subprincipal semicharacteristic $ \Gamma \subset \st$.  First we may as before use the  differential inequality~\eqref{cond1} to obtain that $p_s $ is constant in $ y$ near $ \Gamma $ after multiplication with  an nonvanishing homogeneous $c \in S^0_{1,0} $. By multiplication with an elliptic pseudodifferential operator with symbol $ c$ we obtain that $p_1 $ is constant in $ y$ modulo terms vanishing of infinite order at $ \st$. In fact, the composition of $ P$ with a classical elliptic pseudodifferential operator can only give terms in $p_{sub} $ vanishing of infinite order at $ \st$.

By assumption condition $\sub_\infty (\Psi) $ is not satisfied, so there exists $0\ne a \in   S^0_{1,0}$ so that  $\im ap_1\restr \st$ changes
sign from $+$ to $-$ on the bicharacteristic~${\Gamma} \subset \st  $ of ${\re
a p_1}$ for some  $0 \ne a \in C^\infty$ which can be assumed to be homogeneous and constant in~$y$ and $\eta  $. By multiplication with an elliptic pseudodifferential operator with symbol $ a$ we may assume that $a \equiv 1 $.  
Let $\Gamma' \subset \Gamma$ be the subset on which $ p_1 $ vanishes.
Since $0 \ne H_{\re p_1} $ is tangent to $\Gamma \subset \st$ and $\partial_y p_1 = \set {p_1, \eta} =  0$ we can complete ${\tau}= \re p_1$ and $\eta $ to a homogeneous symplectic coordinate
system $(t,x,y;\tau,\xi,\eta)$ in a conical neighborhood $\omega$ of ${\Gamma}'$, which preserves the foliation of $ \st $. 
Then conjugating with an elliptic Fourier integral operators, we obtain $ p_1 = \tau + i \im p_1$ modulo terms vanishing of infinite order at~$\st $. The conjugation also gives terms proportional to $\partial_\eta p \in S^1$ which vanish of infinite order at $\st $.
As before,  we find that condition   $\sub_\infty ({\Psi})$ is not satisfied in any neighborhood of $ \Gamma' $ in $\st  $ by the invariance.

Since $ \partial_\tau p_1 \ne 0$ we can use the Malgrange preparation theorem  as before to obtain
\begin{equation}\label{Malprep0}
\tau = c(t,x,\tau,\xi,\eta)p_1(t,x,\tau,\xi,\eta) + r(t,x,\xi,\eta)
\end{equation}
locally, and by a partition of unity near $ \Gamma' \subset \st$. This may be extended by homogeneity to a conical neighborhood of $\Gamma' $, thus for $\eta $ close to $0$. 
Then $cp_1 = \tau - r $ where
$r \in S^1$ is constant in $\tau$ and $0 \ne c \in S^0$ near~$ \Gamma'$.  
In fact, this follows by taking the $ \tau$ derivative of~\eqref{Malprep0} and using that $ p_1 = 0$ and  $ \partial_\tau p_1 \ne 0$ at $ \Gamma' $. 

Multiplying the operator $ P^*$ with an elliptic pseudodifferential operator with symbol $c$ we obtain that 
$p_1(t,x,\tau,\xi,\eta) = \tau - r(t,x,\xi,\eta)$ in  a conical neighborhood of  $ \Gamma' $. By writing $r = r_1 + i r_2$ with $r_j$ real, we may complete $\tau - r_1(t,x,\xi, \eta)$, $ \eta$ and $t$ to a homogeneous symplectic coordinate system $(t,x,y;\tau,\xi,\eta)$  in a conical neighborhood of $\Gamma'$. By conjugating with elliptic Fourier integral operators we obtain that
\begin{equation}\label{p1f0}
p_1(t,x,\tau,\xi, \eta) = \tau + f(t,x,\xi, \eta)
\end{equation}
near $ \Gamma' \subset \st $, where $f = - ir_2 $ modulo terms vanishing of infinite order at $ \st$.  
By the invariance we find that $t \mapsto \im f(t,x,\xi, 0 ) $ changes sign from $+ $ to $- $ near $\Gamma' $. 

We shall next consider  on the lower order terms in the expansion $p + p_1 + p_0+ \dots $ where $p_j \in S^{j}$ near $ \Gamma'$ may depend on $ y$ when $j \le 0 $.  Observe that $\partial_{{\eta}} p_0 = 0$ when $ p_1 = 0$ by condition \eqref{dqcond}. (Actually, since $dp_1 \wedge d\ol p_1 = 2i df \wedge d\tau $ it suffices that this holds when $ f$ vanishes of infinite order.) 
We shall use the  Malgrange preparation theorem on $p_j $, $  j \le 0 $, in a conical neighborhood of  $ \Gamma'$. Since $ \partial_\tau p_1 \ne 0$ we obtain for $j \le 0 $  that
\begin{equation}\label{Maldiv0}
p_j(t,x,y,\tau,\xi,\eta)= c_j(t,x,y,\tau,\xi,\eta)p_1(t,x,\tau,\xi,\eta) + r_j(t,x,y,\xi,\eta)
\end{equation}
locally and by a partition of unity near $\Gamma' $. Extending by homogenity we obtain that $ r_j \in S^{j}$ and  $c_j \in S^{j-1} \subset S^{-1}$ near $ \Gamma' $ for $\eta $ close to $0$. After multiplication with an elliptic pseudodifferential operator with symbol $1 - c_j $ we obtain that $p_j \cong r_j $ is independent of  $ \tau$ modulo terms vanishing of infinite order at $ \st$. 
Since $p_0 $ is now independent of $ \tau$ we find by putting $\tau = - \re f $ that  $\partial_{{\eta}} p_0 = 0$ when $ \im f = 0$ (of infinite order) by condition \eqref{dqcond}.
Continuing in this way, we can make any lower order term in the expansion of $ P$  independent of  $ \tau$ modulo terms vanishing of infinite order at $ \st$.

Since condition $\sub_k({\Psi})$, $k \le \infty $, is \emph{not}
satisfied, we find that  $t \mapsto \im f(t,x_0,\xi_0,\eta_0)$ changes sign from $+$ to $-$ as $t \in I$ increases and we assume that $\im f(t,x_0,\xi_0,\eta_0) = 0$ when $ t \in I' \subset I$.
Observe that we shall keep $\eta_0 $ fixed and when $k = \infty $ we have $\eta_0 = 0 $.
If \eqref{deta0} holds then we find that $ \partial_{x,\xi}^\alpha \partial_{{\eta}} f = 0$ on  ${\Gamma}'\times \set {\eta_0}$,  $\forall \, \alpha\, \beta$, and if \eqref{ddeta0} holds then we find that $ \partial_{x,\xi}^\alpha \partial_{{\eta}}^2 f = 0$ on  ${\Gamma}'\times \set {\eta_0}$,  $\forall \, \alpha\, \beta$. Observe that we have $\partial_x^{\alpha}\partial_{\xi}^{\beta}\re f$,  $\forall \, \alpha\, \beta$, when $ \eta = \eta_0$.

Now if $| I' | \ne 0$, then by reducing to {\em
minimal bicharacteristics} near which $\im f $ changes sign as in~\cite[p.\ 75]{ho:nec}, we may assume that $\partial_x^{\alpha}\partial_{\xi}^{\beta}\im f$ vanishes on a bicharacteristic ${\Gamma}'\times \set {\eta_0}$,  $\forall \, \alpha\, \beta$, which is
arbitrarily close to the original bicharacteristic  (see~\cite[Sect.~2]{Witt} for a
more refined analysis).

In fact, if $\im f(a,x,{\xi},\eta_0) > 0 > \im f(b,x,{\xi},\eta_0)$ for some $(x,\xi) $ near  $(x_0,\xi_0) $ and $a <
b$, then we can define
\begin{equation*}
 L(x,{\xi}) = \inf \{\, t-s:\ a < s < t < b  \text{ and }  
    \im  f(s,x,{\xi},\eta_0) > 0 > \im f(t,x,{\xi},\eta_0) \,  \}
\end{equation*}
when $(x,{\xi})$ is close to $(x_0,{\xi}_0)$, and we put
$L_0 = \liminf_{(x,{\xi}) \to (x_0,{\xi}_0)} L(x,{\xi})$. Then for
every ${\varepsilon} > 0$ there exists an open neighborhood~$V_{\varepsilon}
$ of $(x_0,{\xi}_0)$ such that the diameter of~$V_{\varepsilon}$ is
less than~${\varepsilon}$ and $L(x,{\xi}) > L_0 - {\varepsilon}/2$
when $(x,{\xi}) \in V_{\varepsilon}$. By definition, there exists
$(x_{\varepsilon}, {\xi}_{\varepsilon} ) \in V_{\varepsilon}$ and $a < s_{\varepsilon} <
t_{\varepsilon} < b$ so that $t_{\varepsilon} - s_{\varepsilon} < L_0
+ {\varepsilon}/2$ and
$\im f(s_{\varepsilon},x_{\varepsilon},{\xi}_{\varepsilon},\eta_0) > 0 > 
\im f(t_{\varepsilon},x_{\varepsilon},{\xi}_{\varepsilon},\eta_0)$. Then it is
easy to see that  
\begin{equation}\label{mincharcond}
 \partial_x^{\alpha}\partial_{\xi}^{\beta} \im f(t, x_{\varepsilon},
 {\xi}_{\varepsilon},\eta_0) = 0 \quad \forall\,{\alpha}\,{\beta} \quad
 \text{when} \quad s_{\varepsilon}+ 
 {\varepsilon} < t  < t_{\varepsilon} - {\varepsilon} 
\end{equation}
since else we would have a sign change in an  interval of length less than $L_0
- {\varepsilon}/2$ in~$V_{\varepsilon}$. We may then choose a sequence
${\varepsilon}_j \to 0$ so that $s_{{\varepsilon}_j} \to s_0$ and
$t_{{\varepsilon}_j} \to t_0$, then $L_0 = t_0 - s_0$
and~\eqref{mincharcond} holds at~$(x_0,{\xi}_0,\eta_0)$ for $s_0 < t < t_0$.

\begin{prop}\label{prepprop}
Assume that $P$ satisfies the conditions in
Theorem~\ref{mainthm} with  $ k = \kappa(\omega)$. Then by conjugating with elliptic Fourier integral operators and multiplication with an elliptic
pseudodifferential operator we may assume that 
\begin{equation}\label{Pexp}
 P^* = D_t + F(t,x,y,D_x,D_y) +  R(t,x,y,D_t,D_x,D_y)
\end{equation}
microlocally near ${\Gamma} =  \set {(t,x_0,y_0; 0, \xi_0,0):\  t \in I}\subset \st$.
In the case $k < \infty $ we have
$ R \in S(\Lambda^2,g_k) \subset S^2_{1- {1}/{k},0}$ such that $ \Gamma  \times \set {\eta_0} \bigcap \wf_{g_k} (R) = \emptyset$, and $ F = F_1 + F_0$ with $F_1 \in S(\Lambda, g_k)$ and $F_0 \in S( 1, g_k)$.  
Here 
\begin{equation}\label{f1comp}
F_1 \circ \chi (t,x,y, {\xi}, {\eta})  \cong  f(t,x, {\xi},\eta)  + r(t,x,\xi, \eta) \quad \text{modulo $S^{1 - {2}/{k}} $} 
\end{equation}
where $ \chi $ is the blowup map \eqref{blowup}, $r \in  S^{1- {1}/{k}}$, $\re f(t,x,\xi, \eta_0) \equiv 0 $ and $\im f = \im p_{s,k}  \in S^1$ is given by~\eqref{pskdef} such that $ t \mapsto \im f(t,x_0,\xi_0, \eta_0)$ changes sign from $ +$ to $ -$ when $ t \in I$ increases. Also,  $\partial_{{\eta}} r = 0 $ when $ f$ vanishes (of infinite order), and there exists $ c \in  S( 1, g_k)$ so that $F_1 - c \partial_{{\eta}} F_1$ is constant in $ y$ modulo $S( 1, g_k) $, where 
$ \partial_{{\eta}}F_1 \in S( \Lambda^{{1}/{k}}, g_k)$.

If $ \eta_0 \ne  0$, then condition~\eqref{deta0} holds near $ \Gamma \times \set {\eta_0}$. 
If  $ k= 2$ then condition~\eqref{ddeta0} also holds near $ \Gamma \times \set {\eta_0}$ if $\eta_0 \ne 0 $ and near\/ $\Gamma' $ in $\st  $  if $\eta_0 =  0 $.
If $ f = 0$ on\/ $\Gamma' \times \set {\eta_0}$ where $\Gamma' \subset \Gamma $ and $ |\Gamma'| \ne 0$ we may assume that $\partial_x^\alpha \partial_\xi^\beta \partial_\eta^\gamma  f = 0 $ on $ \Gamma' \times \set {\eta_0}$ for any $ \alpha, \beta$  and $| \gamma| \le 1$, and when $k = 2 $  that $\partial_x^\alpha \partial_\xi^\beta \partial_\eta^\gamma  f = 0 $ on $ \Gamma' \times \set {\eta_0}$ for any  $ \alpha, \beta$  and $| \gamma| \le 2$.

In the case when $k = \infty $ we obtain~\eqref{Pexp} with $R \in  S^2_{1,0} $ vanishing of infinite order on~$ \st$, $F = F_1 + F_0 $ where $F_0(t,x,y;\xi, \eta) \in S^0_{1,0}  $ and
\begin{equation}
F_1(t,x,{\xi}, {\eta}) = f(t,x;\xi,\eta) \in S^1_{1,0} 
\end{equation}
where $ t \mapsto \im f(t,x_0,\xi_0, 0)$ changes sign from $ +$
to $ -$ when $ t $ increases.
If $ \im f = 0$ on\/ $\Gamma' \times \set {0}$ with $ |\Gamma'| \ne 0$ we may assume that $\partial_x^\alpha \partial_\xi^\beta  f = 0 $ on $ \Gamma' \times \set {0}$ for any $ \alpha, \beta$.
\end{prop}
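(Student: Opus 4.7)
The plan is to execute a constructive normal form reduction in stages, treating the cases $k < \infty$ and $k = \infty$ separately. The starting point is to multiply $P^*$ by an elliptic classical pseudodifferential operator so as to reduce to $m = 2$, and to choose local homogeneous symplectic coordinates so that $\st = \set{\eta = 0}$ with leaves spanned by $\partial_y$.

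For $k < \infty$ I would work with the blowup $q = p_{sub} \circ \chi$, where $\chi$ is the map in~\eqref{blowup} that sends $g_k$-estimates to standard $S^m_{1,0}$ estimates. First, apply Lemma~\ref{constsublem} to condition~\eqref{cond1} to obtain a factorization $q_{s,k} = c\,\wt q_{s,k}$ with $c$ nonvanishing, and multiply $P$ by the pseudodifferential operator with symbol $C = c \circ \chi^{-1}$ to make the leading part of the refined principal symbol constant in $y$ modulo $S(1,g_k)$, at the cost of a correction proportional to $\partial_\eta \wt p_{sub}$ which is controlled by~\eqref{kcond}. Next, use the failure of $\sub_k(\Psi)$ to select $a \ne 0$ and a maximal semibicharacteristic $\Gamma' \times \set{\eta_0}$ with sign change from $+$ to $-$; by Darboux's theorem, complete $\tau = \re(a q_1)\restr{\eta = \eta_0}$ (pulled back by $\chi$), together with $t$, $y$, $\eta$, to a symplectic coordinate system in which $TM$ is spanned by $\partial_{x,\xi}$. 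Conjugation by an elliptic Fourier integral operator independent of $y$ then reduces the principal part to $\tau + \varrho$ with $\re \varrho \equiv 0$ on $\eta = \eta_0$.

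The next stage is to apply the Malgrange preparation theorem to $q_1$ with respect to $\tau$, writing $\tau = c\,q_1 + r$; multiplying by the symbol $c \circ \chi^{-1}$ gives $q_1 = \tau - r$, and a further symplectic change absorbing $\re r$ into $\tau$, followed by an elliptic Fourier integral conjugation, produces $q_1 = \tau + f$ with $\re f \equiv 0$ at $\eta = \eta_0$ and $t \mapsto \im f$ changing sign from $+$ to $-$. One then iterates Malgrange preparation on the lower order blowup terms $q_j$, $j \ge 2$, each time multiplying by an elliptic $S(1,g_k)$ operator to eliminate $\tau$-dependence modulo terms vanishing on $\st$; particular care is needed at $j = k$ to also absorb the term $c_0 \partial_\eta f$ produced by the initial $y$-averaging, and at $j = 2$ to verify that $\partial_\eta q_2 = 0$ on $q_1^{-1}(0)$ follows from~\eqref{dqcond}. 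Cutting off in a $g_k$-neighborhood of the inhomogeneous ray through $\Gamma \times \set{\eta_0}$ produces $R$ with $\Gamma \times \set{\eta_0} \cap \wf_{g_k}(R) = \emptyset$.

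The case $k = \infty$ is easier: the principal symbol already vanishes to infinite order on $\st$, so no blowup is needed and the whole preparation is applied directly to $p_1$ and the lower order $p_j$, $j \le 0$, in the standard symbol calculus. The final step in both cases is the minimal bicharacteristic argument of~\cite[p.~75]{ho:nec}, reproduced around~\eqref{mincharcond}: if the vanishing set $\Gamma'$ of $\im f$ on $\Gamma$ has positive measure, replace $\Gamma$ by an arbitrarily close minimal bicharacteristic on which $\partial_x^\alpha \partial_\xi^\beta \im f$ vanishes identically, and conclude that $\partial^\gamma_\eta f$ also vanishes for $|\gamma| \le 1$, or for $|\gamma| \le 2$ when $k = 2$, directly from~\eqref{deta0} and~\eqref{ddeta0}. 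The main obstacle is the bookkeeping: ensuring at each step that invariants already achieved (constancy in $y$, the form $\tau + f$, the rate estimates~\eqref{deta0} and~\eqref{ddeta0}) survive subsequent conjugations and compositions, which rests on the $g_k$-isometry of the coordinate changes that preserve $(y,\eta)$ and on the fact that Malgrange preparation does not alter the leading behavior at $\Gamma'$.
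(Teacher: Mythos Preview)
Your proposal is correct and follows essentially the same approach as the paper: the same sequence of blowup via $\chi$, $y$-averaging through Lemma~\ref{constsublem} and condition~\eqref{cond1}, Darboux coordinates adapted to the gliding foliation, elliptic Fourier integral conjugation, two rounds of Malgrange preparation (first on $q_1$, then iteratively on the $q_j$), the $g_k$-cutoff producing $R$, the parallel but simpler treatment when $k=\infty$, and the minimal bicharacteristic reduction at the end. The bookkeeping concerns you flag (preservation of constancy in $y$ and of the rate estimates under each step) are exactly the ones the paper tracks, and are handled for the same reasons you indicate.
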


\section{The Pseudomodes}\label{modes}

For the proof of Theorem~\ref{mainthm} we shall modify the Moyer-H\" ormander
construction of approximate solutions (or pseudomodes) of the type  
\begin{equation}\label{udef}
 u_{\lambda}(t,x,y) = e^{i{\lambda}{\omega}_\lambda(t,x,y)}
 \sum_{j\ge 0} {\phi}_j(t,x,y) {\lambda}^{-j\kappa} \qquad {\lambda}\ge 1
\end{equation}
with $ \kappa > 0$, phase function $ {\omega}_\lambda$ and amplitudes $ \phi_j$. 
Here the phase function ${\omega}_\lambda(t,x,y)$ will be uniformly bounded in $ C^\infty$ and complex valued, such that
$\im {\omega}_\lambda \ge 0$ and $\partial \re {\omega}_\lambda \ne 0$ when
$\im{\omega}_\lambda = 0$.  The amplitude functions $ \phi_j \in C^\infty$ may depend uniformly on $ \lambda$.
Letting $z = (t,x,y)$ we have the formal expansion
\begin{equation}\label{trevesexp}
  p(z,D_z)  (\exp(i{\lambda}{\omega}_\lambda){\phi})
  \sim \exp(i{\lambda}{\omega}_\lambda) \sum_{{\alpha}}
   \partial_{{\zeta}}^{\alpha }
  p(z,{\lambda} \partial_z{\omega}_\lambda(z))\Cal
  R_{\alpha}({\omega}_\lambda,{\lambda},D_z){\phi}(z)/{\alpha}! 
\end{equation}
where $\Cal R_{\alpha}({\omega}_\lambda,{\lambda},D_z){\phi}(z) =
D_w^{\alpha}(\exp(i{\lambda} \wt
{\omega}_\lambda(z,w)){\phi}(w))\restr{w=z}$ 
and 
$$
\wt {\omega}_\lambda(z,w) = {\omega}_\lambda(w) - {\omega}_\lambda(z) +
(z-w)\partial {\omega}_\lambda(z)
$$
The error term in~\eqref{trevesexp}  is of the same order in $ \lambda $ as the last term in the expansion.
Observe that since the phase is complex valued, the values of the symbol are given by an
almost analytic extension at the real parts, see Theorem  3.1 in Chapter VI and Chapter
X:4 in~\cite{T2}. If $P^* =  D_t + F(t,x,y,D_{x,y})$  we find from \eqref{trevesexp} that
\begin{multline}\label{exp0}
  e^{-i{\lambda}{\omega}_\lambda}P^* e^{i{\lambda}{\omega}_\lambda}{\phi} \\ =
\left({\lambda}\partial_t{\omega}_\lambda +
   F(t,x,y,{\lambda} \partial_{x,y}{\omega}_\lambda) -    
   i  \lambda \partial_{\xi,\eta}^2 F(t,x,y,{\lambda}\partial_{x,y}{\omega}_\lambda)
 \partial^2_{x,y}{\omega}_\lambda/2 \right){\phi}  \\ 
    + D_t{\phi} + \partial_{\xi,\eta}F(t,x,y,{\lambda}\partial_{x,y} {\omega}_\lambda)
   D_{x,y} {\phi} + \partial^2_{\xi,\eta}F(t,x,y,{\lambda}\partial_{x,y} {\omega}_\lambda)
   D_{x,y}^2 {\phi}/2 \\  +  \sum_{j \ge
  0}{\lambda}^{-j}R_{j}(t,x,y,D_{x,y}){\phi} 
\end{multline}
Here the values of the symbols at $(t,x,y,\lambda \partial_{t,x,y} {\omega}_\lambda)$ will
be replaced by finite Taylor expansions at $(t,x,y,\lambda \re \partial_{t,x,y} 
{\omega}_\lambda)$, which determine the almost analytic extensions.

Now assume that $P^* = D_t + F + R $ is given by Proposition~\ref{prepprop}. 
In the case $k = \kappa(\omega) < \infty$  in a open neighborhood $\omega$ of the bicharacteristic $ \Gamma$ and $ \eta_0 \ne 0$ we have $F = F_1 + F_0 $ with $F_j \in S(\Lambda^j, g_k) $ and $R \in S(\Lambda^2, g_k)  $ with $\Gamma  \notin \wf_{g_k} (R)$.
In this case, we shall
use a nonhomogeneous phase function given by~\eqref{omegaexp}:
\begin{multline}\label{omega1}
{\omega}_\lambda(t,x, y) =  \w {x-x_0(t), \xi_0} + \lambda^{-1/k}\w {y -y_0(t), \eta_0} \\ + \Cal O(|x-x_0(t)|^2) + \lambda^{\varrho - 1}\Cal O(|y-y_0(t)|^2)
\end{multline}
such that  $\partial_y {\omega}_\lambda =  \lambda^{-1/k} \eta_0 +  \Cal O(\lambda^{\varrho - 1})$ with some $0 <  \varrho < 1/2$. We find by Remark~\ref{detarem} that $F_1(t,x,y,{\lambda} \partial_x {\omega}_\lambda, \lambda\partial_y {\omega}_\lambda) \cong F_1(t,x,y,{\lambda}\xi_0, \lambda^{1 - {1}/{k}}\eta_0) $ gives an approximate blowup of $F_1 \in S(\Lambda, g_k)$. Since  $\partial^\alpha \partial_{t,x}\omega_\lambda = \Cal O(1)$ and $\partial^\alpha \partial_{y}\omega_\lambda = \Cal O(\lambda^{- {1}/{k}})$ for any~$ \alpha$ we obtain the following result from the chain rule.

\begin{rem}\label{symbrem}
If $0 < \varrho \le 1/2 $, $\omega_\lambda(t,x,y) $ is given by~\eqref{omegaexp} and $ a(t,x,y,\tau, \xi,\eta) \in S(\Lambda^m ,g_k)$ then $\lambda^{-m} a(t,x,y,\lambda\partial \omega_\lambda)
\in C^\infty$ uniformly.
\end{rem}

This gives that   $R_0(t,x,y) = F_0(t,x,y,{\lambda}\partial_{x,y}
{\omega}_\lambda)  $ is bounded in~\eqref{exp0}  and
$R_m(t,x,y,D_{t,x,y})$ are bounded differential operators of order $i$ in $t$,
order $j$ in $x$ and order $\ell$ in $y$, where $i + j + \ell \le m+2$ for $m > 0$.
In fact, derivatives in $\tau$ and $\xi$ of $ F_1 \in S(\Lambda, g_k)$ lowers the order of $\lambda$ by one, but derivatives in $\eta$ lowers the order only by $1 - 1/k$ until we have taken $k$ derivatives, thereafter by $1$. 
Thus for $R_m$, which is the coefficient for $\lambda^{-m}$, we find that $-m \le 1 - i  - j - \ell(1 - 1/k) $ so that $i + j + \ell \le m + 1 + \ell/k \le m + 2$ for $\ell \le k$, else $-m \le 1 - i  - j - k(1 - 1/k) - (\ell -k) = 2 -i-j-\ell $  which also gives $i + j + \ell \le m + 2$.   For the term  $ R$ we shall use  the following result when $k < \infty $.

\begin{rem}\label{Rrem}
If $ R \in S(\Lambda^m,g_k) \subset S^m_{1- {1}/{k},0}$, $u_\lambda $ is given by~\eqref {udef} with phase function $\omega_\lambda $ in~\eqref{omegaexp} and
\begin{equation}
\Big \{(t,x,y,\lambda \partial_{t,x,y}\omega_\lambda) : (t,x,y) \in \bigcup_j \supp \phi_j \Big \} \bigcap \wf_{g_k}(R) = \emptyset \qquad \lambda \gg 1
\end{equation} 
then $R u_\lambda = \Cal O(\lambda^{-N})$, $\forall\,  N$. 
\end{rem}

In fact, by using the expansion~\eqref{trevesexp} we find that $\partial^\alpha R(t,x,y,\lambda\partial_{t,x,y} {\omega}_\lambda) = \Cal O(\lambda^{-N}) $ for any $ \alpha$ and $ N$  in a neighborhood of  the support of $ \phi_j $ for any~$ j$ when $  \lambda \gg 1$.

In the case $k = \kappa(\omega) = \infty$ or $\eta_0 = 0 $ we shall
use the phase function given by~\eqref{omegaexp01}, then
\begin{multline}\label{omega2}
{\omega}_\lambda(t,x, y) =  \w {x-x_0(t), \xi_0} + \lambda^{\varrho - 1}\w {y -y_0(t), \eta_0} \\ + \Cal O(|x-x_0(t)|^2) + \lambda^{\varrho - 1}\Cal O(|y-y_0(t)|^2)
\end{multline}
such that  $\partial_y {\omega}_\lambda = \lambda^{\varrho - 1} \big ( \eta_0 + \Cal O(|y - y_0(t)|)\big)$ with some $0 <  \varrho < 1$. If $R\in S^2 $ vanishes of infinite order at $\eta = 0 $
then  $\partial^\alpha R(t,x,y,\lambda\partial_{t,x,y} {\omega}_\lambda) = \Cal O(\lambda^{-N}) $ for any $ \alpha$ and $ N$.  
Thus, we get the expansion~\eqref{exp0} with bounded
$R_0= F_0(t,x,y,\lambda \partial_{x,y}{\omega}_\lambda)$ and
bounded differential operators $R_m(t,x,y,D_{t,x,y})$ of order $i$ in $t$,
order $j$ in $x$ and order $\ell$ in $y$, where $i + j + \ell \le m+2$ for $m > 0$. When $k < \infty $ this follows as before, and 
in the case $k = \infty $ we have that derivatives in $\tau,\, \xi$ and $\eta $ of $ F_1 \in S^1_{1,0}$ lowers the order of $\lambda$ by one. 
In that case, we find for  $R_m$ that $-m \le 1 - i  - j - \ell$ so that $i + j + \ell \le m + 1 $.

\begin{rem}\label{symbrem0}
If $0 < \varrho \le 1 $, $\omega_\lambda(t,x,y) $ is given by~\eqref{omegaexp01} and $ a(t,x,y,\tau, \xi,\eta) \in S^m_{1,0}$ then $\lambda^{-m} a(t,x,y,\lambda\partial \omega_\lambda)
\in C^\infty$ uniformly.
\end{rem}

This follows from the chain rule since $\partial^\alpha \partial \omega_\lambda = \Cal O(1)$  for any~$ \alpha$.

\section{The Eikonal Equation}\label{eiksect}

We shall solve the eikonal equation approximately, first in the case when $k = \kappa(\omega) < \infty $ and $ \eta_0 \ne 0 $. This equation is given by the highest order terms of~\eqref{exp0}: 
\begin{equation}\label{eikeq}
\lambda  \partial_t\omega_\lambda +  F_1(t,x,y, \lambda \partial_{x,y} {\omega}_\lambda)  
- i\lambda\partial_{\eta}^2 F_1(t,x,y,{\lambda}\partial_{x,y} {\omega}_\lambda)  \partial_{y}^2{\omega}_\lambda  = 0
\end{equation}
modulo $ \Cal O(1)$. 
Here $F_1 \in  S(\Lambda, g_k) $  satisfies $F_1 \circ \chi = f \in S^1 $ modulo $ S^{1 - {1}/{k}}$ when $ |\eta| \ls | \xi| ^{1 - {1}/{k}}$  by \eqref{f1comp} in Proposition~\ref{prepprop}. Thus if $\partial_y \omega_\lambda  = \Cal O(\lambda ^{- {1}/{k}})$ we obtain the blowup
\begin{equation}
 F_1(t,x,y, \lambda \partial_{x} {\omega}_\lambda,\lambda  \partial_{y} {\omega}_\lambda) \cong  \lambda  f(t,x,\partial_{x} {\omega}_\lambda, \lambda^{1/k} \partial_{y} {\omega}_\lambda)
\end{equation}
modulo terms that are $ \Cal O(\lambda^{1- {1}/{k}}) $. Now $\re f \equiv 0 $ when $ \eta = \eta_0$, $f$ vanishes on ${\Gamma}' = \set{(t,x_0, {\xi}_0,\eta_0): \ t \in
I'}$ and $t \mapsto \im f(t,x_0,{\xi}_0,\eta_0) \in S^1$ changes sign from $+$ to $-$ as $t$ increases in a neighborhood of $ I'$. We may choose coordinates so that $0 \in I' $ thus  $f(0,x_0,\xi_0,\eta_0) = 0 $. 
Observe that \eqref{deta0} (and  \eqref{ddeta0} if $k = 2 $) holds near $\Gamma' $.
If $|I' | \ne 0$ then by Proposition~\ref{prepprop} we
may assume that~$\partial_x^\alpha \partial_\xi ^\beta\partial_\eta ^\gamma f$ vanishes 
at~${\Gamma}'$, $ \forall \,  \alpha \, \beta$ and $|\gamma| \le 1$   when $k > 2 $ and  for $ \forall \,  \alpha \, \beta$ and $|\gamma| \le 2$  when $k = 2 $.  
We also have that
$F_1 - c\partial_\eta F_1 $ is constant in $y$ modulo  $S(1, g_k)$ when $ |\eta| \ls | \xi| ^{1 - {1}/{k}}$, where $c \in S(1,g_k)$ may depend on $y$.
The case when $\eta_0 =0 $, for example when $ k = \infty$, will be treated in Sect.~\ref{eta0}.

We shall choose the phase
function so that $\im {\omega}_\lambda \ge 0$, $\partial_x \re {\omega}_\lambda \ne 0 $ and $\partial_{x,y}^2 \im{\omega}_\lambda >
0$ near the interval. We shall adapt the method by
H\" ormander~\cite{ho:nec} to inhomogeneous
phase functions. 
The phase function ${\omega}_\lambda(t,x,y)$ is given by the expansion
\begin{multline}\label{omegaexp}
 {\omega}_\lambda (t,x,y) = w_0(t) + \w{{\xi}_0(t), x-x_0(t)} 
  + \lambda^{- {1}/{k}}\w{{\eta}_0(t), y- y_0(t)}  \\ + \sum_{2 \le i
   \le K} w_{i,0}(t) (x-x_0(t))^{i}/i!   + \lambda^{\varrho - 1} \sum_{\substack{2 \le i + j 
   \le K \\ j \ne 0}} w_{i,j}(t) (x-x_0(t))^{i}(y- y_0(t))^j/i! j!
\end{multline}
for sufficiently large $ K$, where we will choose $0 <  \varrho < 1/2$, $\xi_0(0) = \xi_0 \ne 0$,  $\im w_{2,0}(0)> 0$,  $\im w_{1,1}(0) = 0$ and $\im w_{0,2}(0) > 0$. This gives $ \partial_{x,y}^2 \im{\omega}_\lambda >
0$ when $ t= 0$ and $|x - x_0(0)| + | y- y_0(0)| \ll 1$ which then holds in a neighborhood.
Here we use the multilinear forms $w_{i,j} = \set {w_{\alpha,\beta}i!j!/\alpha ! \beta !}_{|\alpha| = i, |\beta| = j}$,  $ (x-x_0(t))^j = \set{ (x-x_0(t))^\alpha}_{|\alpha| = j}$ and $ (y-y_0(t))^j = \set{ (y-y_0(t))^\alpha}_{|\alpha| = j}$ to simplify the notation. Observe that $ x_0(t)$, $ y_0(t)$, $ \xi_0(t)$, $ \eta_0(t)$ and $ w_{j,k}(t)$ will depend uniformly  on $ \lambda$.

Putting ${\Delta}x = x-x_0(t)$ and $\Delta y = y - y_0(t)$ we find that 
\begin{multline}\label{dtomega}
 \partial_t{\omega}_\lambda(t,x,y) = w_0'(t)  - \w{x_0'(t) ,{\xi}_0(t) } -  \lambda^{-1/k}\w{y_0'(t) ,{\eta}_0(t) } \\+ \w{{\xi}'_0(t) - w_{2,0}(t)x_0'(t)- \lambda^{\varrho - 1} w_{1,1}(t)y_0'(t), \Delta x}  
 \\ + \w{\lambda^{- {1}/{k}}{\eta}'_0(t) - \lambda^{\varrho - 1} w_{1,1}(t)x_0'(t) - \lambda^{\varrho - 1} w_{0,2}(t)y_0'(t), \Delta y}
 \\ + \sum_{2 \le i  \le K}(w_{i,0}'(t)- w_{i+1,0}(t)x_0'(t) - \lambda^{\varrho - 1} w_{i,1}(t)y_0'(t)) (\Delta x)^{i}/i! 
 \\ + \lambda^{\varrho - 1} \sum_{\substack{2 \le i + j 
  \le K \\ j\ne 0}} (w_{i,j}'(t) - w_{i+1,j}(t)x_0'(t) - w_{i,j+1}(t)y_0'(t))  (\Delta x)^{i}(\Delta y)^j/i! j!
\end{multline}
where the terms $w_{i,j}(t) \equiv 0 $ for $i + j > K $. We have
\begin{multline}\label{dxomega}
\partial_{x} {\omega}_\lambda(t,x,y) = {\xi}_{0}(t) +
\sum_{1 \le i \le K-1}   w_{i + 1,0}(t)({\Delta}x)^{i}/i! \\  + \lambda^{\varrho - 1} \sum_{\substack{1 \le i +  j 
\le K -1  \\ j \ne 0}} w_{i+1,j}(t) ({\Delta}x)^{i}({\Delta}y)^j/i! j! 
 ={\xi}_{0}(t)  +  {\sigma}_{0}(t,x) +  \lambda^{\varrho - 1}{\sigma}_{1}(t,x,y)
\end{multline}
Here  ${\sigma}_0$ is a finite expansion in powers of
${\Delta}x$ and  ${\sigma}_1$ is a finite expansion in powers of
${\Delta}x $ and ${\Delta}y $.
Also
\begin{multline}\label{dyomega}
\partial_{y} {\omega}_\lambda(t,x,y) =  \lambda^{-{1}/{k}}{\eta}_0
+ \lambda^{\varrho - 1} \sum_{1 \le  i + j \le K-1 } w_{i ,j+1}(t) ({\Delta}x )^{i}({\Delta}y)^j/i! j! \\ =
 \lambda^{-{1}/{k}}\left ({\eta}_{0}(t)  +   \lambda^{{1}/{k} +  \varrho - 1}{\sigma}_{2}(t,x,y)\right)
\end{multline}
where  ${\sigma}_2$ is a finite expansion in powers of
${\Delta}x$ and $\Delta y $.

Since the phase function is complex valued, the values of the symbol will be given by a formal Taylor expansion at the real values.
Recall that  $ F_1\circ \chi \cong f$ modulo $S^{1 - {1}/{k}} $ so by
the expansion in Remark~\ref{detarem} we find
\begin{multline}\label{F1exp}
F_1(t,  x,y,  \lambda\partial_x{\omega}_\lambda, \lambda \partial_y{\omega}_\lambda)
\cong \lambda f(t,x,{\xi}_0+ {\sigma}_0 + \lambda^{\varrho - 1} \sigma_1, \eta_0 + \lambda^{{1}/{k} +\varrho - 1} \sigma_2)  \\
=  \lambda \sum_{\alpha,\beta } \partial_{\xi}^\alpha  \partial_{\eta}^\beta f(t, x,{\xi}_0 + \sigma_0, \eta_0 )(\lambda^{\varrho - 1} \sigma_1) ^\alpha(\lambda^{{1}/{k} +\varrho - 1} \sigma_2) ^\beta/\alpha! \beta!
\end{multline}
modulo $\Cal O(\lambda^{1 - {1}/{k}} )$,
which can then be expanded in $\Delta x $ and $\Delta y$. This expansion can be done for any derivative of $F_1 $, see for example~\eqref{detaf1}.
Observe that $F_1 - c\partial_\eta F_1 $ is constant in $y$ modulo  $S(1, g_k)$, where $ \partial_\eta F_1 \in  S(\Lambda^{1/k}, g_k) $ when $ |\eta| \ls | \xi| ^{1 - {1}/{k}}$ and $c \in S(1, g_k)$ may depend on~$y$.
Remark~\ref{detarem}  also gives that $\partial_\eta ^2F_1(t,  x,y,  \lambda\partial_{x,y}{\omega}_\lambda) $ is bounded and by~\eqref{dyomega} we find that $ \partial_y^2 \omega_\lambda = \Cal O(\lambda^{\varrho- 1})$ so the last term in~\eqref{eikeq} is  $ \Cal O(\lambda^{\varrho})$.

When $x = x_0$ and $y = y_0$ we obtain that
$
 F_1(t,  x_0,y_0, \lambda \partial_x{\omega}_\lambda, \lambda \partial_y{\omega}_\lambda)  \cong \lambda f(t,x_0,\xi_0,\eta_0)
$
modulo  $\Cal O(\lambda^{1-{1}/{k}}) $. 
Thus, taking the value of~\eqref{eikeq} and dividing by $\lambda $,  we obtain the equation
\begin{equation}
 w_0'(t) - \w{x_0'(t),{\xi}_0(t)}  +  f(t, x_0(t), {\xi}_0(t), \eta_0(t)) = 0
\end{equation}
modulo  $\Cal O(\lambda^{-1/k}) + \Cal O(\lambda^{{1}/{k} +  \varrho - 1})$.
By taking real and imaginary parts we obtain the equations
\begin{equation}\label{2a}
\left\{
\begin{aligned} 
&\re w'_0(t) =  \w{x_0'(t),{\xi}_0(t)} -  \re f(t, x_0(t), {\xi}_0(t),\eta_0(t))\\
&\im w_0'(t) = - \im f(t, x_0(t), {\xi}_0(t),\eta_0(t))
\end{aligned}
\right.
\end{equation}
modulo  $\Cal O(\lambda^{-1/k}) + \Cal O(\lambda^{{1}/{k} +  \varrho - 1})  = \Cal O(\lambda^{-\kappa})$ for some $\kappa > 0 $ since $\varrho < 1/2 $.
After choosing $w_0(0) $ this will determine $w_0$ when we have determined $(x_0(t) ,{\xi}_0(t),\eta_0(t) )$. 
In the following, we shall solve equations like~\eqref{2a} modulo $\Cal O(\lambda^{-\kappa}) $ for some $\kappa > 0 $, which will give the asymptotic solutions when  $ \lambda \to \infty$.

Using \eqref{F1exp}, we find that the first order terms in $\Delta x$ of~\eqref{eikeq} will similarly be zero if 
\begin{multline}\label{22}
{\xi}_0'(t) -  w_{2,0}(t)x_0'(t) +  \partial_xf(t,x_0(t),{\xi}_0(t),\eta_0(t)) \\ +
\partial _{\xi}f(t,x_0(t),{\xi}_0(t),\eta_0(t))w_{2,0}(t) = 0 
 \end{multline}
modulo  $ \Cal O(\lambda^{-\kappa})$ for some $\kappa > 0 $. By taking real and imaginary parts we find that~\eqref{22} gives that
\begin{equation}\label{2}
\left\{
\begin{aligned} 
&{\xi}_0' = \re w_{2,0} x_0' - \re \partial_x  f+ \im \partial _{\xi} f \im w_{2,0} -  \re \partial _{\xi} f \re w_{2,0} 
\\
&\im w_{2,0}x_0' = \im \partial_x  f + \im \partial
_{\xi} f \re w_{2,0} + \re  \partial
_{\xi}f \im w_{2,0}
\end{aligned}
\right.
\end{equation}
modulo  $ \Cal O(\lambda^{-\kappa})$. 
Here and in what follows, the values of the symbols are taken at $(t, x_0(t), y_0(t), {\xi}_0(t),\eta_0(t)) $.
We shall put $(x_0(0), {\xi}_0(0)) = (x_0, {\xi}_0)$, which will determine $x_0(t)$ and
${\xi}_0(t)$ if $|\im w_{2}(t)| \ne 0$. 

Similarly, the second order terms in $\Delta x$  of~\eqref{eikeq} vanish if we solve
\begin{equation*}
w'_{2,0}/2 - w_{3,0}x_0'/2
+  \partial_{\xi}f w_{3,0}/2 +  \partial_x^2 f/2 + 
\Re  \left( \partial_x\partial_{\xi}f w_{2,0}\right) + w_{2,0} \partial_{\xi}^2 f w_{2,0}/2 = 0
\end{equation*}
modulo  $ \Cal O(\lambda^{-\kappa})$ for some $\kappa > 0 $ where $\Re A = \frac{1}{2}(A + A^t)$ is the symmetric part of $ A$.
This gives
\begin{equation}\label{w2eq}
 w'_{2,0}  =  w_{3,0}x_0'
- \left(\partial_{\xi}f w_{3,0}
 + \partial_x^2 f  + 
2\Re \left(\partial_x\partial_{\xi}f w_{2,0}\right) + w_{2,0} \partial_{\xi}^2 f w_{2,0}\right) 
\end{equation}
with initial data $ w_{2,0}(0) $ such that 
$\im w_{2,0}(0) > 0$, which then holds in a neighborhood. 
Similarly, for $ j> 2 $ we obtain 
\begin{equation}\label{wjeq}
w'_{j,0}(t)  =  w_{j+1,0}(t) x_0'(t) 
- \Big(f\big(t, x, \xi_0(t) + \sigma_0(t,x) , \eta_0(t) \big)\Big)_j 
\end{equation}
modulo  $ \Cal O(\lambda^{-\kappa})$, where we have taken the $ j$:th term of the expansion in $ \Delta x$. Observe that \eqref{2}--\eqref{wjeq} only involve $x_0 $, $ \xi_0$ and $w_{j,0} $ with $j \le K $.

Next, we will study the $ y$ dependent terms. Then, we have to expand $F_1 \circ \chi \cong f + r $ modulo $S^{1 - {2}/{k}} $ where $ r \in S^{1 - {1}/{k}}$ is independent of $ y$ and $\partial_{{\eta}} r = 0 $ when $ f$ vanishes (of infinite order).
By expanding $ r$, we get \eqref{F1exp} with $ f$ replaced by $ r$ and $ \lambda $ replaced by $ \lambda^{1 - {1}/{k}}$.
Since  $\partial_\eta F_1  \circ \chi \cong   \lambda^{{1}/{k}} \partial_\eta f $ modulo $S^0$ we obtain modulo bounded terms  that
\begin{multline}\label{detaf1}
\partial_\eta F_1(t,x,y, \lambda \partial_{x,y} {\omega}_\lambda)  \cong  \lambda^{{1}/{k}} \partial_\eta f(t,x_0, \xi_0+ \sigma_0 + \lambda^{\varrho -1}\sigma_1, \eta_0 + \lambda^{{1}/{k} + \varrho -1}\sigma_2) 
\\ \cong \lambda^{{1}/{k} } \partial_\eta f(t,x_0, \xi_0+ \sigma_0, \eta_0) +  \lambda^{{2}/{k}
 + \varrho - 1}\partial_\eta^2 f(t,x_0, \xi_0+ \sigma_0 , \eta_0) \sigma_2 
\end{multline}
modulo $\Cal O(\lambda^{{1}/{k}+  2\varrho - 1})$. Similarly we obtain that 
\begin{equation*}
\partial_\eta^2 F_1(t,x,y, \lambda \partial_{x,y} {\omega}_\lambda)  \cong  \lambda^{{2}/{k} - 1}\partial_\eta^2 f(t,x_0, \xi_0+ \sigma_0 , \eta_0) 
\end{equation*}
modulo  $\Cal O(\lambda^{{1}/{k}+  \varrho - 1})$.
Recall that  $\partial_y F_1 \cong  \partial_y c \partial_\eta F_1  \in S(\Lambda^{{1}/{k}} , g_k)$ modulo $ S(1,g_k)$ when $ |\eta| \ls | \xi| ^{1 - {1}/{k}}$, which gives that  $\partial_y^\alpha F_1 \cong  \partial_y^\alpha c \partial_\eta F_1 $ modulo $ S(1,g_k)$ for any $\alpha $, see \eqref{dyf1}.
Using \eqref{F1exp} and \eqref{detaf1}  we find that the coefficients of the first order terms in $\Delta y$ of~\eqref{eikeq} are
\begin{multline}\label{etaeq}
\lambda^{1- {1}/{k}}\eta_0'   - \lambda^{\varrho }w_{0,2}y_0' - \lambda^{\varrho }w_{1,1}x_0'    + \lambda^{\varrho } \partial_\eta r w_{0,2}  +  \lambda^{\varrho } \partial_\xi f w_{1,1} 
\\ + \lambda^{{1}/{k} + \varrho } \partial_\eta f w_{0,2} +
\lambda^{{1}/{k}} \partial_y c \partial_\eta f   - i \lambda^{{2}/{k} + \varrho -1}\partial_\eta^2 f w_{0,3}
\end{multline}
modulo  $\Cal O(1) $  for some $c \in S^0 $. Here and in what follows, the values of the symbols are taken at $(t, x_0(t), y_0(t), {\xi}_0(t) + \sigma_0(t,x),\eta_0(t)) $
By taking the real and imaginary parts we obtain the equations
\begin{equation}\label{detaeq}
\eta_0'  = 
- \lambda^{{2}/{k} + \varrho -1}\re \partial_\eta f  w_{0,2} -  \lambda^{{2}/{k} - 1} \re  \partial_y c \partial_\eta f  
\end{equation}
modulo  $\Cal O(\lambda^{{1}/{k}+ \varrho - 1} )$, and
\begin{multline}\label{detaeq0}
 \im w_{0,2}y_0' = - \im w_{1,1}x_0'   +  \im  \partial_\eta r w_{0,2} + \im \partial_\xi f  w_{1,1}   \\ 
 + \lambda^{1/k}\im  \partial_\eta f  w_{0,2}   + \lambda^{{1}/{k}- \varrho}\im \partial_y c \partial_\eta f    - \lambda^{{2}/{k} -1}\re \partial_\eta^2 f w_{0,3}
\end{multline}
modulo  $\Cal O(\lambda^{-\varrho}) $.
This gives that $ \eta'_0(0) = \Cal O(\lambda^{-\kappa})$ for some $ \kappa > 0$, since~\eqref{deta0} gives  $ \partial_\eta f(0,x_0,\xi_0,\eta_0) = 0$. 
We will choose initial data $\eta_0(0) = \eta_0 $, $y_0(0) = y_0 $ and $ \im w_{0,2}(0) > 0$, then $y_0' $ is well defined in a neighborhood.
In order to control the unbounded terms in~\eqref{detaeq} and \eqref{detaeq0}, we shall use scaling and  \eqref{deta0}. Therefore we let 
\begin{equation}
\zeta_0(t) = \lambda^{1 - {1}/{k}- \varrho}\big (\eta_0(t) - \eta_0 \big )
\end{equation}
where $\eta_0 = \eta_0(0) $. Then we get from \eqref{detaeq} that
\begin{equation}\label{dzetaeq}
\zeta_0'(t) = - \lambda^{{1}/{k}}\re \partial_\eta f  w_{0,2} -  \lambda^{{1}/{k} - \varrho} \re  \partial_y c \partial_\eta f  
\end{equation}
modulo bounded terms. Expanding~\eqref{dzetaeq} in $\eta_0(t) =  \lambda^{{1}/{k} + \varrho -1} \zeta_0(t) + \eta_0 $ we find
\begin{multline}\label{zeta0eq}
\zeta_0'(t) = - \lambda^{{1}/{k}}\re \partial_\eta f_0  w_{0,2} -  \lambda^{{1}/{k} - \varrho} \re  \partial_y c \partial_\eta f_0  - \lambda^{{2}/{k} + \varrho - 1} \re \zeta_0 \partial_\eta^2 f_0  w_{0,2}
\\ -  \lambda^{{2}/{k} -1} \re \big( \zeta_0 \partial_\eta\partial_y c \partial_\eta f_0   +  \partial_y c \partial_\eta^2 f_0 \zeta_0 \big)
\end{multline}
modulo $\Cal O(\lambda^{-\kappa}) $ for some $\kappa > 0 $ where $\partial_\eta^j f_0 = \partial_\eta^j f\restr {\eta = \eta_0}$ for $j \ge 0 $.
We shall use the following result.

\begin{lem}\label{lemclaim}
Assume $ k = \kappa(\omega) < \infty$, $ \varepsilon$ is given by \eqref{deta0}--\eqref{ddeta0} and $ \im w_0(t) \ge  0$ is the solution to $ \im w_0'(t) = - \im f (t, x_0(t), \xi_0(t), \eta_0) $ with $ \im w_0(0) = 0$. 

If  \eqref{deta0} holds, then for any $ \delta < \min \left ({\varepsilon},1 - \frac{1}{k} \right ) $, $ \alpha $ and  $\beta $, there exists $ \kappa > 0$ and $C \ge 1$ with the property that if
\begin{equation}\label{detaass1}
\left|\int_0^t \big| \lambda^{1/k}\partial_x^\alpha\partial_\xi^\beta \partial_\eta f(s,x_0(s),\xi_0(s),\eta_0)
\big| \, ds\right| \ge C\lambda^{-\delta} 
\end{equation}
with $ \lambda \ge C$, then $\lambda  \im w_0(s) \ge
{\lambda}^{\kappa}/C$ for some $s$ in the interval
connecting $0$ and $t$.

If  $ k = 2$  and \eqref{ddeta0} holds, then for any $ \delta < \varepsilon $, $ \alpha $ and  $\beta $, there exists $ \kappa > 0$ and $C \ge 1$ with the property that if
\begin{equation}\label{detaass2}
\left| \int_0^t \big| \lambda^{{2}/{k}-1} \partial_x^\alpha\partial_\xi^\beta \partial_\eta^2 f(s,x_0(s),\xi_0(s),\eta_0) \big| \, ds\right| \ge C \lambda^{-\delta}  \qquad \forall\,\alpha\, \beta
\end{equation}
with $ \lambda \ge C$, then $\lambda  \im w_0(s) \ge
{\lambda}^{\kappa}/C$ for some $s$ in the interval
connecting $0$ and $t$.
\end{lem}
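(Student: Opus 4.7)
My plan is to convert the integral lower bound on $|\partial_\eta f|$ (resp.\ $|\partial_\eta^2 f|$) from the hypothesis into a lower bound on $\im w_0$ by exploiting the pointwise domination of these derivatives by a power of $|\im f|$ coming from \eqref{deta0}--\eqref{ddeta0}, together with the identity linking $\im w_0$ to the integral of $|\im f|$.

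First, since $\im w_0(0)=0$ and $\im w_0\ge 0$, the function $s\mapsto\im w_0(s)$ attains a minimum at $s=0$. By Proposition~\ref{prepprop}, the function $\im f$ evaluated along the curve $s\mapsto(s,x_0(s),\xi_0(s),\eta_0)$ changes sign from $+$ to $-$ at $s=0$, so after shrinking the neighborhood we may assume it has constant sign on each side of $s=0$, opposite to the sign of $s$. Consequently, on the interval $I$ from $0$ to $t$ we obtain the key identity
\begin{equation*}
\im w_0(t)=-\int_0^t\im f(s,x_0(s),\xi_0(s),\eta_0)\,ds=\int_I\bigl|\im f(s,x_0(s),\xi_0(s),\eta_0)\bigr|\,ds.
\end{equation*}

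For the first statement, condition~\eqref{deta0} applied pointwise along the curve gives $|\partial_x^\alpha\partial_\xi^\beta\partial_\eta f|\ls|\im f|^{1/k+\varepsilon}$. To estimate $\int_I|\im f|^{1/k+\varepsilon}\,ds$ from above in terms of $\im w_0(t)$ I split into two cases. When $1/k+\varepsilon\le 1$, H\"older's inequality on the bounded interval $I$ yields $\int_I|\im f|^{1/k+\varepsilon}\,ds\le|I|^{1-1/k-\varepsilon}(\im w_0(t))^{1/k+\varepsilon}$; when $1/k+\varepsilon>1$, the uniform boundedness of $\im f$ along the compact curve gives $|\im f|^{1/k+\varepsilon}\ls|\im f|$, hence $\int_I|\im f|^{1/k+\varepsilon}\,ds\ls\im w_0(t)$. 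Inserting either estimate into hypothesis~\eqref{detaass1} produces
\begin{equation*}
C\lambda^{-\delta}\le\lambda^{1/k}\int_I|\partial_\eta f|\,ds\ls\lambda^{1/k}(\im w_0(t))^{\min(1/k+\varepsilon,\,1)},
\end{equation*}
and solving for $\lambda\im w_0(t)$ yields $\lambda\im w_0(t)\gs\lambda^\kappa$ with $\kappa=\min\bigl(\tfrac{\varepsilon-\delta}{1/k+\varepsilon},\,1-\tfrac{1}{k}-\delta\bigr)>0$, the positivity following from $\delta<\min(\varepsilon,1-1/k)$. Choosing $s=t$ completes this case.

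The second statement is entirely analogous: for $k=2$ the weight $\lambda^{2/k-1}=1$, and condition~\eqref{ddeta0} gives $|\partial_x^\alpha\partial_\xi^\beta\partial_\eta^2 f|\ls|\im f|^\varepsilon$ along the curve. The same H\"older/boundedness dichotomy reduces $\int_I|\im f|^\varepsilon\,ds$ to a power of $\im w_0(t)$, and combining with hypothesis~\eqref{detaass2} gives $\lambda\im w_0(t)\gs\lambda^\kappa$ for some $\kappa>0$ under $\delta<\varepsilon$. The main obstacle I foresee is the rigorous justification of the identity $\im w_0(t)=\int_I|\im f|\,ds$: if $\im f$ exhibits more than one sign change along the curve on $[0,t]$, one must replace $t$ by a point $s\in[0,t]$ at which $\im w_0$ attains its maximum on this interval---which is precisely the flexibility granted by the phrase ``some $s$'' in the conclusion---and then argue on the subinterval between $0$ and this maximizer, where $\im w_0$ is monotone and $\im f$ has a single sign change. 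The remainder of the proof is a routine H\"older computation.
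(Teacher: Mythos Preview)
Your argument correctly captures the paper's \emph{finite-order} case: when $\im w_0$ vanishes to finite order at $t=0$ and is monotone on each side, the identity $\im w_0(t)=\int_0^t|\im f|\,ds$ holds, and the H\"older step combined with \eqref{deta0}--\eqref{ddeta0} yields exactly the exponent the paper obtains. However, your proof has a genuine gap in the \emph{infinite-order} case, and your proposed fix does not close it.

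The hypothesis $\im w_0(0)=0$, $\im w_0\ge 0$ does not force $\im f$ to have constant sign on $[0,t]$. When $\im w_0$ vanishes to infinite order at $0$, $\im f$ along the curve may vanish on an interval or change sign repeatedly, so ``shrinking the neighborhood'' is illegitimate (the point $t$ is given). Your fallback---pass to the maximizer $s_{\max}$ of $\im w_0$ on $[0,t]$---fails for two reasons. First, $\im w_0$ need not be monotone between $0$ and $s_{\max}$, so the identity $\im w_0(s_{\max})=\int_0^{s_{\max}}|\im f|$ is still unavailable; what your H\"older chain actually bounds below is the \emph{total variation} $\int_0^t|\im f|$, and total variation can vastly exceed $\max\im w_0$ when there are oscillations. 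Second, even if you found a monotone subinterval, the hypothesis \eqref{detaass1} concerns the full interval $[0,t]$ and may fail on the subinterval.

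The paper handles the infinite-order case by a completely different mechanism. From the hypothesis one extracts a point $s$ where $|\im f(s)|\gtrsim\lambda^{-1+\varrho}$ (via the pointwise bound \eqref{deta0}), and then defines $t_0$ as the \emph{first} time $|\im w_0'|$ reaches this level. Infinite-order vanishing at $0$ forces $|t_0|\gtrsim(\lambda^{-1+\varrho})^{1/N}$ for every $N$. The crucial step is then an auxiliary result (Lemma~\ref{intlem} in the paper): if $F\ge 0$ has a local minimum at $0$ and $|F'|$ first reaches level $\kappa$ at a point $t_0$ with $|t_0|\gtrsim\kappa^{\varrho}$, then $\max_{[0,t_0]}F\gtrsim\kappa^{1+\varrho}$. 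This converts the pointwise derivative bound plus the ``long'' interval into a lower bound on the maximum---precisely what your total-variation argument cannot supply. You should add this case distinction and prove (or cite) the auxiliary lemma; without it the infinite-order situation is not covered.
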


Observe that $\eta_0 $ is constant in Lemma~\ref{lemclaim}, but we have to show that $\im w_0(t)$ has the minimum 0 at $t= 0 $.
Lemma~\ref{lemclaim} will be proved in Sect.~\ref{lempf}.
Clearly, \eqref{detaass2} cannot hold if $k > 2 $ and $\delta < 1/3 $.
We shall use Lemma~\ref{lemclaim} for a fixed $ \delta > 0$ and for $ |\alpha| + | \beta| \le N$.
Since we only have to integrate the eikonal equations in the interval where $\lambda \im w_0 \ls {\lambda}^{\kappa}$ for some $ \kappa > 0 $ and sufficiently large  $ \lambda $, we may assume the integrals in \eqref{detaass1} and~\eqref{detaass2}  are $ \Cal O(\lambda^{-\delta}) $ when $ \lambda \to \infty$.
Using this and integrating \eqref{zeta0eq} we find that $ \zeta_0 = \Cal O( \lambda^{-\kappa})$ for some $ \kappa > 0$
if $\varrho \ll 1$, $\im w_0(t) \ge 0 $  and the coefficients $w_{i,j} $ are bounded. This gives a constant asymptotic solution $ \eta_0$.

\begin{rem}\label{rrem}
If $\zeta_0(t) =  \lambda^{1 - {1}/{k}- \varrho}\big (\eta_0(t) - \eta_0 \big)$ then we have
\begin{multline}
\partial_x^\alpha\partial_\xi^\beta f(t, x_0(t), {\xi}_0(t),\eta_0(t)) \cong \partial_x^\alpha\partial_\xi^\beta f(t, x_0(t), {\xi}_0(t),\eta_0)  \\+ \lambda^{{1}/{k} + \varrho - 1} \partial_x^\alpha\partial_\xi^\beta \partial_\eta f(t,x_0(t),\xi_0(t),\eta_0)\zeta_0(t) \\ + \lambda^{{2}/{k} 
+ 2\varrho - 2}\partial_x^\alpha\partial_\xi^\beta \partial_\eta^2 f(t,x_0(t),\xi_0(t),\eta_0) \zeta_0^2(t)/2
\end{multline}
for $\varrho \ll 1 $ and $  t  \in I  $ modulo $\Cal O \big (\lambda^{- 1 - \kappa} |\zeta_0(t)|^3 \big )$ for some $\kappa > 0 $. We also  obtain that 
\begin{multline}
\partial_x^\alpha\partial_\xi^\beta \partial_\eta f(t,x_0(t),\xi_0(t),\eta_0(t)) \cong \partial_x^\alpha\partial_\xi^\beta \partial_\eta f(t,x_0(t),\xi_0(t),\eta_0) \\ + \lambda^{{1}/{k} + \varrho - 1}\partial_x^\alpha\partial_\xi^\beta \partial_\eta^2 f(t,x_0(t),\xi_0(t),\eta_0) \zeta_0(t) 
\end{multline}
for $\varrho \ll 1 $ and $  t  \in I  $ modulo $\Cal O \big (\lambda^{2\varrho - 1} |\zeta_0(t)|^2\big )$  and 
\begin{equation}
\partial_x^\alpha\partial_\xi^\beta \partial_\eta^2 f(t,x_0(t),\xi_0(t),\eta_0(t)) \cong \partial_x^\alpha\partial_\xi^\beta \partial_\eta^2 f(t,x_0(t),\xi_0(t),\eta_0) 
\end{equation}
when $\varrho \ll 1 $ and $  t  \in I  $ modulo $\Cal O \big (\lambda^{ \varrho - {1}/{2}}  |\zeta_0(t)| \big )$.
\end{rem}

If $ \zeta_0$ is bounded and the integrals in \eqref{detaass1} and~\eqref{detaass2} are $ \Cal O(\lambda^{-\delta}) $   then for $\delta $ and $\varrho  $ small enough we may replace $\eta_0 $ with $\eta_0(t) $ in these integrals with a smaller $ \delta > 0$. In fact, then the change in~\eqref{detaass2} is $ \Cal O(\lambda^{\varrho -1/2}) $  and the change in~\eqref{detaass1} is $ \Cal O(\lambda^{\varrho -\delta} + \lambda^{2\varrho - {1}/{2}}) $ using~\eqref{detaass2}. 
Observe that we only need that $\delta > 0 $ for the proof, but we have to show that  $\im w_0(t)$ has the minimum 0 at $t= 0 $ which will be done later.

Using  \eqref{F1exp} and \eqref{detaf1} we find that the second order terms in $\Delta  y$ of~\eqref{eikeq} vanish if
\begin{multline}
\lambda^{\varrho } \big (w_{0,2}'  - w_{1,2}x_0' - w_{0,3}y_0'  +  \partial_\eta r w_{0,3}  +  \partial_\xi f w_{1,2} \big)  +  \lambda^{{1}/{k}+ \varrho  } \partial_\eta f w_{0,3} 
\\ + \lambda^{{2}/{k} + 2\varrho - 1} w_{0,2} \partial_\eta^2 f w_{0,2}
+   \lambda^{{1}/{k}} \partial_y^2 c \partial_\eta f  
 + 2\lambda^{{2}/{k} + \varrho - 1}    \partial_y c \partial_\eta^2 f w_{0,2} - i \lambda^{{2}/{k} + \varrho -1} \partial_\eta^2 f w_{0,4}= 0
\end{multline}
modulo $ \Cal O(1) $  if $\varrho \ll 1 $.
This gives
\begin{multline}\label{w02eq}
w_{0,2}'  = w_{1,2}x_0' +  w_{0,3}y_0'  -  \partial_\eta r w_{0,3} -  \partial_\xi f w_{1,2}
  - \lambda^{{1}/{k}} \partial_\eta f w_{0,3} 
\\ -  \lambda^{{2}/{k} + \varrho -1} w_{0,2} \partial_\eta^2 f w_{0,2} 
 -   \lambda^{{1}/{k}- \varrho} \partial_y^2 c \partial_\eta f 
  - 2\lambda^{{2}/{k} - 1}  \partial_y c \partial_\eta^2 f w_{0,2} + i \lambda^{{2}/{k} -1} \partial_\eta^2 f w_{0,4}
\end{multline}
modulo $ \Cal O(\lambda^{-\varrho})$ if $\varrho  \ll 1 $. 
By using Lemma~\ref{lemclaim} we may assume that the coefficients in the right hand side are uniformly integrable when $\varrho $ is small enough.
We choose the initial value so that $\im w_{0,2}(0) >0$ which then holds in a neighborhood.

Similarly, the coefficients for the term $ \Delta x^j \Delta y^\ell $ in~\eqref{eikeq} can be found from the expansion in  $\Delta x $ and $ \Delta y$ of
\begin{multline}
\lambda^{\varrho}\left( \sum_{j ,\ell  \ne 0}(w_{j ,\ell }'  - w_{j +1,\ell }x_0'  - w_{j ,\ell +1}y_0') \Delta x^j  \Delta y^\ell /j !\ell ! + \partial_\eta r \sigma_2 + \partial_\xi f \sigma_1 \right) 
\\ + \lambda^{{1}/{k} + \varrho } \partial_\eta f \sigma_2 
+ \lambda^{{2}/{k} + 2\varrho -1} \sigma_2 \partial_\eta^2 f \sigma_2/2 
+   \lambda^{{1}/{k}}   \partial_y c \Delta y   \partial_\eta f 
\\ + \lambda^{{2}/{k} + \varrho - 1}    \partial_y c \Delta y \partial_\eta^2 f \sigma_2 - i\lambda^{{2}/{k} + \varrho -1}\partial_\eta^2 f \partial_y \sigma_2 
\end{multline}
modulo $ \Cal O(1)$ if $\varrho  \ll 1 $. 
Here the values of the symbols are taken at $(t, x_0(t), y_0(t), {\xi}_0(t) + \sigma_0(t,x),\eta_0(t)) $, so the last terms can be expanded in~$ \Delta x$ and  $ \Delta y$ which also  involves the $\xi $ derivatives.
Taking the coefficient for $ \Delta x^j  \Delta y^\ell $ and dividing by $ \lambda^{\varrho}$ we obtain that these terms vanish if
\begin{multline}\label{wjleq}
w_{j ,\ell }'  =  w_{j +1,\ell }x_0' + w_{j ,\ell +1}y_0'  -  j ! \, \ell ! \Big(  \partial_\eta r \sigma_2  + \partial_\xi f \sigma_1 + \lambda^{{1}/{k} } \partial_\eta f \sigma_2  \\
+ \lambda^{{2}/{k} + \varrho -1} \sigma_2 \partial_\eta^2 f \sigma_2/2 
+   \lambda^{{1}/{k} - \varrho}    \partial_y c \Delta y  \partial_\eta f  
 + \lambda^{{2}/{k} - 1}    \partial_y c \Delta y \partial_\eta^2 f \sigma_2  - i \lambda^{{2}/{k} -1}\partial_\eta^2 f \partial_y \sigma_2\Big)_{j ,\ell }
\end{multline}
modulo  $\Cal O(\lambda^{-\varrho})$, where in the right hand side we have taken the coefficient of $  \Delta x^j  \Delta y^\ell $, expanding $ \partial_yc$, $\partial_\eta r $, $\partial_\xi f $,  $\partial_\eta f $ and  $\partial_\eta ^2 f $ in $ \Delta x$ and $ \Delta y$ which also involves the $\xi $ derivatives. We choose  initial values  $w_{j ,\ell }(0) = 0 $ for  $j  = \ell = 1 $ and $j  + \ell  > 2 $. Observe that the Lagrange error term in the Taylor expansion of   \eqref{F1exp} is $ \Cal O(\lambda (|x-x_0(t)| + |y - y_0(t)|)^{K+1})$.

Now assume that $\im w_0(t) \ge 0$ is a solution to $ \im w_0'(t) = - \im f (t, x_0(t), \xi_0(t), \eta_0) $ with $ \im w_0(0) = 0$, thus $ \im w_0(t)$ has a minimum at $t= 0 $.
The  equations~\eqref{2a}, \eqref{2}--\eqref{wjeq}, \eqref{detaeq0}, \eqref{zeta0eq}, \eqref{w02eq}  and~\eqref{wjleq} 
form a system of  nonlinear ODE for $x_0(t),$ $ y_0(t),$ $ \xi_0(t) ,$ $\zeta_0(t) $ and~$ w_{j,\ell }(t)$ for $ j + \ell \le K $. 
By Remark~\ref{rrem} we can then replace $\eta_0(t)$ in  $ f$  by $ \eta_0 = \eta_0(0)$ when $ \varrho \ll 1$.
Since we only have to integrate where $\im w_0(t) \ls \lambda^{\kappa -1}$ for some $ \kappa > 0$, this system  has uniformly integrable coefficients by  Lemma~\ref{lemclaim} for $\varrho \ll 1 $, which gives a local solution near $(0,x_0,y_0,{\xi}_0, \eta_0) $. 

In the case when ${\Gamma}' = \set{(t,x_0,y_0,{\xi}_0,\eta_0): \ t \in I'}$ for  $|I' |\ne 0$,  we shall use the following definition.

\begin{defn}\label{defI}
	For $ a(t) \in L^\infty(\br) $ and $ \kappa \in \br $ we say that $a(t) \in I(\lambda^\kappa)$ if $\int_0^t
	a(s)\,ds = \Cal O (\lambda^\kappa)$ uniformly for all $t \in I$ when $\lambda \gg 1 $.
\end{defn}

We have assumed that 
$ \partial_{x}^{{\alpha}} \partial_{{\xi}}^{{\beta}} f(t,x_0,{\xi}_0,\eta_0) = 0$, $\forall \, \alpha\, \beta $, for $t \in I' $.
Let $I $ be the interval containing $ I'$ such that $  \partial_{x}^{{\alpha}} \partial_{{\xi}}^{{\beta}} \partial_{\eta} f \in I(\lambda^{-1/k - \delta})$ and $  \partial_{x}^{{\alpha}} \partial_{{\xi}}^{{\beta}} \partial_{\eta}^2 f \in I(\lambda^{1-2/k - \delta})$ for some $\delta > 0 $ by  Lemma~\ref{lemclaim}.
We obtain that $x_0'  = \xi_0' = 0$ on~$I' $  by~\eqref{2} which gives $ w_0'  = 0$  on $I' $  by \eqref{2a}. We also obtain $ \eta_0' \cong 0 $ modulo   $I(\lambda^{-\kappa})$  for some $\kappa > 0  $ by \eqref{detaeq}, since all the coefficients are in $ I(\lambda^{-\kappa})$.  We find from equations~\eqref{w2eq} and~\eqref{wjeq}  that $ w_{j,0}'  =   0$  on $I' $  for $j \ge 2 $.
By \eqref{wjleq} we find when $\ell  > 0 $ that
\begin{equation}
w_{j, \ell}' \cong  w_{j, \ell+1}\big(y_0' - j! \, \ell ! \, \partial_\eta r \big)    \qquad \text {on $I' $ modulo   $I(\lambda^{-\kappa})$}
\end{equation}  
where $y'_0 \in  I(1) $. Since $w_{j, \ell} \equiv 0 $ when $ j + \ell > K$ and $w_{ j, \ell}(0) = 0 $ when $ j + \ell  > 2 $ we recursively find that  $w_{j, \ell} \cong 0 $ when $  j + \ell  > 2 $ and $w_{j, \ell}'  \cong  0 $ when  $ j + \ell =  2 $ modulo   $I(\lambda^{-\kappa})$. By~\eqref{detaeq0} we find that 
\begin{equation}
y_0' \cong (\im w_{0,2}(0))^{-1}  \im \partial_\eta r w_{0,2}(0)  \qquad \text {on $I' $ modulo   $I(\lambda^{-\kappa})$}
\end{equation}
which gives $y_0' = o(1)$ on $ I$.
In fact, we assume that $\partial_\eta r = 0$ when  $ \im f$ vanishes of infinite order. We may choose $I' $ as the largest interval containing~0 such that $w_0  $ vanishes on $I' $. 
Then in any neighborhood of an endpoint of $ I' $ there exists points where $w_0 > c \ge \lambda^{\kappa -1} $ for $ \lambda \gg 1$.

Now  $f $ and $ r$ are independent of $ y$ near the semibicharacteristic, so the  coefficients of the system of equations are independent of $y_0(t)$ modulo $  I(\lambda^{- \kappa})$. (If the symbols are independent of $ y$ in an arbitrarily large $ y$ neighborhood we don't need the vanishing condition on $ \partial_\eta r$.) 
Thus, we obtain the solution $\omega_\lambda $ in a neighborhood of ${\gamma}' = \set {(t,x_0(t),y_0(t)): \ t \in I'}$ for $\lambda \gg 1 $. 
In fact, by scaling we see that the $ I(\lambda^{-\kappa}) $ perturbations do not change the local solvability of the ordinary differential equation for large enough~$ \lambda$. 

But we also have to show that $t \mapsto \im f(t,x_0(t), {\xi}_0(t),\eta_0) = f_0(t)$
changes sign from $+$ to $-$ as $t$ increases
for some choice of initial data $ (x_0,{\xi}_0) $ and $w_{2,0}(0) $. Then we obtain that  $\im w_0(t) \ge 0$ for the solution to $ \im w_0'(t) = - \im f (t, x_0(t), \xi_0(t), \eta_0) $ with suitable initial data.
 Observe that~\eqref{2}--\eqref{wjeq} only involve $x_0 $, $ \xi_0$ and $w_{j,0} $ with $j \le K $ and are uniformly integrable.

First we shall consider the case when $t \mapsto \im f(t,x_0, {\xi}_0,\eta_0)$ changes sign from $ +$ to $ -$ of first order. Then
\begin{multline}\label{df0}
f_0'(0) = \im \partial_t f (0, x_0,{\xi}_0, \eta_0)  + \im \partial_x f (0, x_0,{\xi}_0, \eta_0) x_0'(0)  \\ + \im \partial_\xi f (0, x_0,{\xi}_0, \eta_0)  \xi_0'(0) 
\end{multline}
where $\im  \partial_t f (0, x_0,{\xi}_0, \eta_0) < 0 $.

\begin{rem}\label{initclaim}
From \eqref{2} we find that we may choose $ w_{2,0}(0) $ so that $\im w_{2,0}(0) > 0$ and $|(x_0'(0), \xi_0'(0))| \ll 1$.
\end{rem}

In fact, if $ \im  \partial _{\xi}f \ne 0$ then we may choose $ \re w_{2,0}(0) $ 
so that 
$ 
\im \partial_x f + \im \partial_\xi f  \re w_{2,0} = 0
$ 
at $ (0,x_0,{\xi}_0,\eta_0) $. Since $\re f \equiv 0$ when $\eta = \eta_0 $ we find from~\eqref{2} that $x_0'(0) = 0 $ and 
\begin{equation*}
\xi_0'(0) =  \im \partial_\xi  f  (0,x_0,\xi_0, \eta_0)\im  w_{2,0}(0)  =  o(1)
\end{equation*} 
if $ \im  w_{2,0}(0) \ll 1$. 
On the other hand, if  $ \im  \partial _{\xi}f(t,x_0,{\xi}_0,\eta_0) = 0$ then by putting $\re w_{2,0}(0)  = 0 $ we find  from~\eqref{2} that $\xi_0'(0) = 0 $ and if $ \im  w_{2,0}(0) \gg 1$ we obtain
\begin{equation*}
x_0'(0) = (  \im  w_{2,0}(0) )^{-1} \im \partial_x f(0,x_0,{\xi}_0,\eta_0)  =  o(1)
\end{equation*} 

If  $(x'_0(0), \xi'_0(0))  = o(1)  $ then $f_0'(0)  < 0 $ by \eqref{df0}  so $t \mapsto f_0(t) $ has a sign change from $ +$ to $ -$ of first order at $ t = 0$.
By~\eqref{2a} we obtain that the asymptotic solution $t \mapsto \im
w_0(t)$ has a local minimum on~$I$, which is also true when $\lambda \gg 1 $, and 
the minima can be made equal to $0$ by subtracting a constant depending on $\lambda $.

We also have to consider the general case when $t \mapsto \im f(t,x_0,{\xi}_0,\eta_0)$
changes sign from $+$ to $-$ of higher order as $ t$ increases near $ I'$. If  there exist points  in any $(x,\xi) $ neighborhood
of~${\Gamma}' $ for $\eta=  {\eta_0}$
where $\im f = 0 $ and $\partial_t \im f < 0$, then by changing the initial data we can as before construct approximate solutions  for which $t \mapsto
\im w_0(t)$ has a local minimum equal to 0 on~$I$ when $\lambda \gg 1 $.
Otherwise, $\partial_t \im f \ge 0$ when $\im f = 0$ in some $(x,\xi) $ neighborhood of~${\Gamma}' $ for $\eta = \eta_0$. Then we take the asymptotic solution $ (w(t), w_{j,0}(t) ) = (x_0(t),\xi_0(t), w_{j,0}(t) )$ to \eqref{2}--\eqref{wjeq} when $\lambda \to \infty $ with $ \eta_0(t) \equiv \eta_0$ and initial data $w(0) = (x,{\xi})$ but fixed  $w_{j,0}(0)$. This gives a 
change of coordinates $(t,w) \mapsto (t,w(t))$ near $\gamma'  = \set{(t,x_0,{\xi}_0): \ t \in I'}$ if $\partial_{x}^{{\alpha}} \partial_{{\xi}}^{{\beta}} f(t,x_0,{\xi}_0,\eta_0) = 0 $ when $ t \in I'$. In fact, the solution is constant on $\Gamma' $ since all the coefficients of \eqref{2}--\eqref{wjeq} vanish there.
By the invariance of  
condition (${\Psi}$) there will still exist a change of sign  from 
$+$ to $-$ of $t
\mapsto \im f(t,w(t),\eta_0)$ in any neighborhood of~${\Gamma}'$  after the change of
coordinates, see~\cite[Theorem~26.4.12]{ho:yellow}. (Recall that 
conditions  \eqref{kcond}, \eqref{cond2} and \eqref{cond1} hold in some
neighborhood of ${\Gamma}'$.)  By choosing suitable initial values
$(x_0,{\xi}_0)$ at $t = t_0$ we obtain that
$\im w_0'(t) = - \im f(t,w(t),\eta_0)$ has a sign change from $ -$ to $ +$ and
$t \mapsto \im w_0(t) $ has a
local minimum on  $I$ for $\lambda \gg 1 $, which can be assumed to be
equal to 0 after subtraction. 
Thus, we obtain that
\begin{equation}\label{11}
e^{i{\lambda}{\omega}_\lambda(t,x)} \le e^{-c({\lambda}(\im w_0(t) + c|{\Delta}|^2) + {\lambda}^\varrho|{\Delta}y |^2)}
\qquad  |{\Delta}x + |{\Delta}y | \ll 1
\end{equation}
where $\min_I \im w_0(t) = 0$ with $\im w_0(t) >
0$ for $t \in \partial I$.
This gives an approximate solution to~\eqref{eikeq}, and summing up, we have proved the following result.

\begin{prop}\label{eikprop0}
Let \/ ${\Gamma}' = \set{(t,x_0,y_0; 0,{\xi}_0, \eta_0): \ t \in I' }$ so that
$ 
\partial_{x}^{{\alpha}} \partial_{{\xi}}^{{\beta}} f(t,x_0,{\xi}_0,\eta_0) = 0$, $\forall \, \alpha\, \beta$, for $t \in I' $ if  $ |I'| \ne 0$.
Then for  $ \varrho > 0$ small enough
we may solve~\eqref{eikeq} modulo terms that are $ \Cal O(1)$
with ${\omega}_\lambda (t,x)$  given by~\eqref{omegaexp} in a neighborhood of
${\gamma}' = \set {(t,x_0(t),y_0(t)): \ t \in I'}$ modulo $\Cal O( \lambda(|x-x_0(t)| + |y - y_0(t)|)^M)$, $\forall\, M$, such that when $t \in
I' $ we have that $(x_0(t), {\xi}_0(t), \eta_0(t) ) = (x_0,{\xi}_0, \eta_0)$,  $w_0(t) = 0$, $w_{1,1}(t) \cong 0$  and $w_{j,k}(t) \cong 0$ for $j + k > 2$ modulo $ \Cal O(\lambda^{-\kappa})$ for some $ \kappa > 0$, 
$\im w_{2,0}(t) > 0$ and $\im w_{0,2}(t) > 0$. 

Assume that $t \mapsto f(t,x_0,{\xi}_0,\eta_0)$ changes sign from $+$ to $-$
as $t$ increases near~$I'$. Then by changing the initial values we may obtain that
the curve $t \mapsto (t,x_0(t),y_0(t);0, {\xi}_0(t),\eta_0(t))$, $t \in I$, is
arbitrarily close to~${\Gamma}$,  $\min_{t \in I}
\im w_0(t) = 0$ and $\im w_0(t)  > 0$ for $t \in \partial I$.
\end{prop}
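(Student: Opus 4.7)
The plan is to establish Proposition~\ref{eikprop0} by formalizing the system of ODEs laid out in \eqref{2a}--\eqref{wjleq}. First, I would collect in \eqref{eikeq} the coefficients of each monomial $(\Delta x)^j(\Delta y)^\ell$ arising from the Taylor expansion of $F_1(t,x,y,\lambda\partial_{x,y}\omega_\lambda)$ via \eqref{F1exp} and \eqref{detaf1}, combined with $\lambda\partial_t\omega_\lambda$ from \eqref{dtomega} and the correction $-i\lambda\partial_\eta^2 F_1\,\partial_y^2\omega_\lambda$. Matching coefficients yields a closed nonlinear ODE system whose unknowns are $w_0(t)$, $(x_0(t),\xi_0(t))$, $(y_0(t),\zeta_0(t))$ with $\zeta_0(t)=\lambda^{1-1/k-\varrho}(\eta_0(t)-\eta_0(0))$, and the multilinear forms $w_{j,\ell}(t)$ for $2\le j+\ell\le K$. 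Closure follows from the triangular structure: the equation for $w_{j,\ell}$ involves $w_{j+1,\ell}$ and $w_{j,\ell+1}$, which vanish once $j+\ell=K$, so the system may be integrated recursively from the top index down.

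The main obstacle is that the right-hand sides of \eqref{zeta0eq}, \eqref{detaeq0}, \eqref{w02eq} and \eqref{wjleq} carry unbounded factors $\lambda^{1/k}$, $\lambda^{2/k-1}$, $\lambda^{2/k+\varrho-1}$ attached to $\eta$-derivatives of $f$. I would control these with Lemma~\ref{lemclaim}: since the construction only matters on the subinterval $I$ where $\lambda\im w_0(t)\lesssim \lambda^\kappa$, the lemma guarantees that the integrals of the offending coefficients are $\Cal O(\lambda^{-\delta})$ for some $\delta>0$, provided $\varrho$ is taken sufficiently small. By Remark~\ref{rrem}, freezing $\eta_0(t)$ to the constant $\eta_0(0)$ in the arguments of the symbols perturbs the coefficients by quantities that remain $\Cal O(\lambda^{-\delta})$, so the full system has uniformly integrable right-hand sides and admits a local solution depending uniformly on $\lambda$ via Picard iteration.

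When $|I'|\ne 0$, inspection of \eqref{2}--\eqref{wjeq} shows that every right-hand side vanishes identically on $I'$, forcing $x_0$, $\xi_0$, and each $w_{j,0}$ to remain at their initial values and $w_0'\equiv 0$ there; \eqref{detaeq} yields $\eta_0'\in I(\lambda^{-\kappa})$, and solving \eqref{wjleq} recursively from $j+\ell=K$ down (with the prescribed zero initial data for $j+\ell>2$) gives $w_{j,\ell}\cong 0$ modulo $I(\lambda^{-\kappa})$ for $j+\ell>2$. Plugging this back into \eqref{detaeq0} and using the vanishing of $\partial_\eta r$ where $f$ vanishes of infinite order gives $y_0'=o(1)$ on $I'$, which closes the argument on $I'$.

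The final step is to arrange $\min_I\im w_0=0$ with $\im w_0>0$ at $\partial I$. When the sign change of $t\mapsto\im f(t,x_0,\xi_0,\eta_0)$ is of first order, Remark~\ref{initclaim} produces $w_{2,0}(0)$ with $\im w_{2,0}(0)>0$ and $(x_0'(0),\xi_0'(0))=o(1)$, so $f_0'(0)<0$ by \eqref{df0} and $\im w_0$ acquires a strict local minimum at $0$ via \eqref{2a}; subtracting a $\lambda$-dependent constant normalizes this minimum to $0$. For higher-order sign changes, the plan is to freeze $w_{j,0}(0)$ and pass to the limit $\lambda\to\infty$ with $\eta_0(t)\equiv\eta_0$ in \eqref{2}--\eqref{wjeq} to obtain a change of coordinates $(t,w)\mapsto(t,w(t))$ that reduces to the identity on $\Gamma'$; the invariance of condition~$(\Psi)$ from \cite[Theorem~26.4.12]{ho:yellow} then forces a residual sign change of $t\mapsto\im f(t,w(t),\eta_0)$ from $+$ to $-$ arbitrarily close to $\Gamma'$, and suitable choice of $(x_0,\xi_0)$ produces the required minimum, normalized to $0$ by subtraction. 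The Lagrange remainder in the Taylor expansion used in \eqref{F1exp} is $\Cal O(\lambda(|\Delta x|+|\Delta y|)^{K+1})$, which matches the error term stated in the proposition.
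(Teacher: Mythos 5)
Your proposal follows the paper's proof essentially step for step: the same expansion of the eikonal equation in $(\Delta x, \Delta y)$ to obtain the ODE system \eqref{2a}--\eqref{wjleq}, the same rescaling $\zeta_0 = \lambda^{1-1/k-\varrho}(\eta_0(t)-\eta_0)$, the same use of Lemma~\ref{lemclaim} combined with Remark~\ref{rrem} to obtain uniform integrability for $\varrho$ small, the same recursive argument on $I'$ when $|I'|\ne 0$, and the same treatment of the sign change via Remark~\ref{initclaim} for first order and the invariance of condition~$(\Psi)$ for higher order. One small imprecision: the full system off $I'$ is not triangular — the equations for $x_0',\xi_0',\eta_0',y_0'$ involve $w_{2,0},w_{1,1},w_{0,2}$, so the "integrated recursively from the top index down" picture only applies to the simplified system on $I'$; closure of the general system comes simply from the truncation $w_{i,j}\equiv 0$ for $i+j>K$, after which it is a coupled finite-dimensional ODE handled by Picard as you say.
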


Since \eqref{11} holds near $\Gamma' $ the errors in the eikonal
equation will give terms that are bounded by $C_M{\lambda}^{1-M\varrho/2}$, $\forall \, M $.
Observe that cutting off where $\im w_0 > 0$ will give errors that are
$\Cal O({\lambda}^{-M})$, $\forall\, M$.

\section{The bicharacteristics on $ \Sigma_2$}\label{eta0} 

We shall also consider the case when $\eta_0 = 0 $ on the bicharacteristics, including the case $k = \infty $. As before, the eikonal equation is given by 
\begin{equation}\label{eikeq0}
\lambda \partial_t\omega_\lambda + F_1(t,x,y,\lambda \partial_{x,y} {\omega}_\lambda) - i\lambda\partial_{\eta}^2 F_1(t,x,y,{\lambda}\partial_{x,y} {\omega}_\lambda)  \partial_{y}^2{\omega}_\lambda = 0
\end{equation}
modulo bounded terms.  By Proposition~\ref{prepprop} we have $F_1 \in S(\Lambda, g_k) $, $F_1 \circ \chi =  f  \in S^1 $  modulo $ S^{1 - {1}/{k}}$ when $|\eta | \ls |\xi |^{1- {1}/{k}} $ for $k < \infty $, and $F_1 \cong p_1 = f   \in S^1 $ modulo terms in $ S^2$ vanishing of infinite order at $\st  $ when $ k = \infty$. We have assumed that $ f$ is independent of~$ y$, $\re  f(t,x,{\xi},0) \equiv 0$,
$t \mapsto \im  f(t,x,{\xi},0)$ changes sign from $+$ to $-$ as $t$ increases in a neighborhood of $I' $ and $f = 0$  at ${\Gamma}' = \set{(t,x_0, {\xi}_0,0):\ \xi_0 \ne 0 \  \ t \in
I'}$. If $|I' | \ne 0$ then by
Proposition~\ref{prepprop} we
may assume that~$f\restr \st$ vanishes of infinite order 
at~${\Gamma}'$. When $k < \infty $ there exists $c \in S(1,g_k) $ so that $F_1 - c \partial_\eta F_1 $ is independent of~$y $ modulo $S(1,g_k) $ when  $|\eta | \ls |\xi |^{1- {1}/{k}} $ and when $k = \infty $ we may assume that $ f$ is independent of~$y $.  

We will use the phase function
\begin{multline}\label{omegaexp01}
{\omega}_\lambda (t,x,y) = w_0(t) + \w{x-x_0(t), {\xi}_0(t)}  + \sum_{2 \le i
\le K} w_{i,0}(t) (x-x_0(t))^{i}/i!  \\ + \lambda^{\varrho- 1}\Big( \w{y- y_0(t), {\eta}_0(t)} + \sum_{\substack{2 \le i + j 
\le K \\ j \ne 0}} w_{i,j}(t) (x-x_0(t))^{i}(y- y_0(t))^j/i! j! \Big)
\end{multline}
for sufficiently large $ K$, where we will choose $0 < \varrho < 1/2  $, $\xi_0(0) = \xi_0 \ne 0$, $\eta_0(0)= 0 $, $\im w_{2,0}(0)> 0$, $\im w_{1,1}(0) =  0$ and $\im w_{0,2}(0) > 0$, which will give $ \partial_{x,y}^2 \im{\omega}_\lambda >
0$ when $  t= 0 $ and $|x - x_0(0)| + | y- y_0(0)| \ll 1$ which then holds in a neighborhood.
Here as before we use the multilinear forms $w_{i,j} = \set {w_{\alpha,\beta}i!j!/\alpha ! \beta !}_{|\alpha| = i, |\beta| = j}$,  $ (x-x_0(t))^j = \set{ (x-x_0(t))^\alpha}_{|\alpha| = j}$ and $ (y-y_0(t))^j = \set{ (y-y_0(t))^\alpha}_{|\alpha| = j}$ to simplify the notation. Observe that $ x_0(t)$, $ y_0(t)$, $ \xi_0(t)$, $ \eta_0(t)$ and $ w_{j,k}(t)$ will depend uniformly  on $ \lambda$.

Putting ${\Delta}x = x-x_0(t)$ and $\Delta y = y - y_0(t)$ we find that 
\begin{multline}\label{dtomega0}
\partial_t{\omega}_\lambda(t,x,y) = w_0'(t)  - \w{x_0'(t) ,{\xi}_0(t) } -  \lambda^{\varrho - 1}\w{y_0'(t) ,{\eta}_0(t) } \\ + \w{{\xi}'_0(t) - w_{2,0}(t)x_0'(t)- \lambda^{\varrho - 1}w_{1,1}(t)y_0'(t), {\Delta}x}  
\\ + \lambda^{\varrho - 1}\w{{\eta}'_0(t)  - w_{1,1}(t)x_0'(t) -  w_{0,2}(t)y_0'(t), {\Delta}y}
\\  + \sum_{2 \le i  \le K}(w_{i,0}'(t)- w_{i+1,0}(t)x_0'(t)- \lambda^{\varrho - 1} w_{i,1}(t)y_0'(t)) ({\Delta}x)^{i}/i!  
\\ + \lambda^{\varrho - 1} \sum_{\substack{2 \le i + j 
\le K \\ j\ne 0}} (w_{i,j}'(t) - w_{i+1,j}(t)x_0'(t) - w_{i,j+1}(t)y_0'(t)) ({\Delta}x)^{i}({\Delta}y)^j/i! j!
\end{multline}
where the terms $w_{i,j}(t) \equiv 0 $ for  $i + j > K $.  We have
\begin{multline}\label{dxomega0}
\partial_{x} {\omega}_\lambda(t,x,y) = {\xi}_{0}(t) +
\sum_{1 \le i \le K-1}   w_{i + 1,0}(t)({\Delta}x)^{i}/i! \\ + \lambda^{\varrho - 1} \sum_{\substack{1 \le i + j 
\le K \\ j \ne 0}} w_{i+1,j}(t) ({\Delta}x)^{i}({\Delta}y)^j/i! j! 
 ={\xi}_{0}(t)  +  {\sigma}_{0}(t,x) +  \lambda^{\varrho - 1}{\sigma}_{1}(t,x,y)
\end{multline}
Here  ${\sigma}_0$ is a finite expansion in powers of
${\Delta}x$ and  ${\sigma}_1$ is a finite expansion in powers of
${\Delta}x $ and $\Delta y $. We also find
\begin{multline}\label{dyomega0}
\partial_{y} {\omega}_\lambda(t,x,y) = 
 \lambda^{\varrho - 1} \Big( {\eta}_0 + \sum_{1 \le  i + j \le K-1 } w_{i ,j+1}(t) ({\Delta}x)^{i}({\Delta}y)^j/i! j! \Big) \\ =
 \lambda^{\varrho - 1}\big({\eta}_{0}(t)  +  {\sigma}_{2}(t,x,y) \big)
\end{multline}
where  ${\sigma}_2$ is a finite expansion in powers of
${\Delta}x$ and $\Delta y$. 
The main change from Sect.~\ref{eiksect} is that $\lambda^{{1}/{k} }\eta_0 $
get replaced by $\lambda^{\varrho }\eta_0 $ in~\eqref{dyomega}. 
Since the phase function is complex valued, the values will be given by a formal Taylor expansion of the symbol at the real values. 

In the case $k < \infty $, the blowup  $F_1\circ \chi $  gives the Taylor expansion of $ F_1$ at $\eta = 0 $, for example $ f$ is the $ k$:th Taylor term of $ p$ with the constant term of $ p_1$. 
By expanding, we find
\begin{multline}\label{f1exp}
F_1(t, x,y,\lambda \partial_{x,y}\omega_\lambda)  \cong  \lambda F_1(t, x,y,\xi_0 + \sigma_0, 0)  + \lambda^{\varrho} \partial _\xi F_1(t, x,y,\xi_0 + \sigma_0 , 0){\sigma}_{1}\\  +  \lambda^{\varrho} \partial _\eta F_1(t, x,y,\xi_0 + \sigma_0 , 0)({\eta}_{0}  +  {\sigma}_{2}) 
 + \lambda^{2\varrho} ({\eta}_{0}  +  {\sigma}_{2})\partial _\eta^2 F_1(t, x,y,\xi_0 + \sigma_0, 0)({\eta}_{0}  +  {\sigma}_{2}) /2
\end{multline}
modulo $\Cal O(\lambda^{-\kappa}) $ for some $\kappa > 0 $ if $\varrho \ll 1$.  In fact,   on $ \st $ we have $p =  \partial p= \partial_\xi^2 p = 0$  so $ \partial _{\xi, \eta} F_1  \in S^0$. The last term of \eqref{f1exp} is  $\Cal O(\lambda^{2\varrho -1}) $  if $k > 2 $ since then $ \partial _{\eta}^2 F_1  \in S^{-1}$ on $ \st $.  Similarly, we find
\begin{multline}\label{df1exp}
\partial _\eta F_1(t, x,y,\lambda \partial_{x,y}\omega_\lambda)  \cong  \partial _\eta F_1(t, x,y,\xi_0 + \sigma_0, 0) 
\\ +  \lambda^{\varrho} \partial _\eta^2 F_1(t, x,y,\xi_0 + \sigma_0, 0)({\eta}_{0}  +  {\sigma}_{2})  
\end{multline}
modulo $\Cal O(\lambda^{-\kappa}) $, where the last term vanish if $k > 2 $.

In the case $k = \infty $ we have that the principal symbol $p \in S^2$ vanishes of infinite order when $\eta = 0 $, which gives $ p(t, x,y,\lambda \partial_{x,y}\omega_\lambda)  = \Cal O(\lambda^{2 -j(1-\varrho)}) $ for any $ j$. Thus, we may assume that $ F_1 \cong f$ modulo $ S^0$ when $k = \infty $ which gives $\partial_y F_1 \cong 0 $ modulo $S^0 $. 
Since $ \partial_\eta ^2 F_1$ is bounded  and $\partial_y^2 \omega_\lambda =  \Cal O(\lambda^{\varrho - 1}) $ by~\eqref{dyomega0} we obtain that last term in   \eqref{eikeq0} is $ \Cal O(\lambda^{\varrho}) $. Thus we find from  \eqref{eikeq0} and  \eqref{f1exp} that
\begin{equation}\label{eikeq00}
\partial_t\omega_\lambda + f(t,x,\xi_0 + \sigma_0, 0) \cong 0
\end{equation}
modulo  $\Cal O(\lambda^{-\kappa}) $.  Observe we shall solve~\eqref{eikeq00} modulo terms that are $ \Cal O(\lambda^{-1}) $. 
When $x = x_0$  we obtain from \eqref{f1exp}  and \eqref{eikeq00} that
\begin{equation}\label{dw0}
w_0'  - \w{x_0'(t) ,{\xi}_0(t) } + f(t,  x_0,\xi_0, 0)  = 0
\end{equation}
modulo  $\Cal O(\lambda^{-\kappa}) $, which gives the equations~\eqref{2a} with $\re f \equiv \eta_0 \equiv 0$. 

Similarly, since $ F_1 = f$ when $\eta = 0 $ the first order terms in $\Delta x$ of~\eqref{eikeq0} vanish if
\begin{equation}\label{220}
{\xi}_0'(t) -  w_{2,0}(t)x_0'(t) + 
\partial_x f(t,x_0(t),{\xi}_0(t),0) 
 + \partial _{\xi} f(t,x_0(t),{\xi}_0(t),0)w_{2,0}(t)  = 0
\end{equation}
modulo  $\Cal O(\lambda^{-\kappa}) $.
By taking real and imaginary parts we find from~\eqref{220} that \eqref{2} holds
with  $\eta_0 \equiv 0$. We put $(x_0(0), {\xi}_0(0)) = (x_0, {\xi}_0)$, which will determine $x_0(t)$ and ${\xi}_0(t)$ if $\im w_{2,0}(t) \ne 0$. 
The second order terms in $\Delta x$ vanish if
\begin{equation}\label{221}
w'_{2,0} - w_{3,0}x_0'
+ \partial_{\xi}f w_{3,0} + \partial_x^2 f + 
2 \Re \left( \partial_x\partial_{\xi}f w_{2,0}\right) + w_{2,0} \partial_{\xi}^2f w_{2,0} = 0
\end{equation}
modulo $\Cal O(\lambda^{-\kappa}) $, where $ \Re A = ( A + A^t)/2 $  is the symmetric part of $ A$. 
Here and in what follows, the values of the symbols are taken at $(t,x_0(t),y_0(t),\xi_0(t),0)  $.
This gives the equation \eqref{w2eq} modulo $\Cal O (\lambda^{-\kappa}) $
with $\eta_0 \equiv 0$ and we choose initial data $ w_{2,0}(0) $ such that 
$\im w_{2,0}(0) > 0$ which then holds in a neighborhood. 
Similarly, for $ j > 2$ we obtain 
\begin{equation}\label{222}
w'_{j,0}(t)  =  w_{j+1,0}(t) x_0'(t) 
- \Big(f\big(t, x, \xi_0(t) + \sigma_0(t,x) ,0 \big)\Big)_j 
\end{equation}
modulo  $ \Cal O(\lambda^{-\kappa})$, where we have taken the $ j$:th term of the expansion in $ \Delta x$.
Observe that \eqref{220}--\eqref{222} only involve $x_0 $, $ \xi_0$ and $w_{j,0} $ with $j \le K $.

When $k < \infty $, we expand $F_1 \circ \chi \cong f + r $ modulo $S^{1 - {2}/{k}} $ when $|\eta | \ls |\xi |^{1- {1}/{k}} $, where $ r \in S^{1 - {1}/{k}}$ is homogeneous, independent of $ y$ and $\partial_{{\eta}} r = 0 $ when $ f$ vanishes (of infinite order).
Since $\partial_\eta f = 0 $ at $\st  $, we find that  $\partial_\eta F_1 \circ \chi =  |\xi |^{{1}/{k}} \partial_\eta r \in S^0$ at $ \st$. Observe that $r $ consists of the Taylor terms of $p $ of order $ k + 1$ and the  first order Taylor terms of $p_1 $ at $\st  $.
Now at $\st $  we find $ \partial_{\xi} F_1 \cong \partial_{\xi }f $  and $ \partial_\eta^2 F_1  \cong |\xi |^{{2}/{k}}\partial_\eta^2 f \in S^0$  modulo $ S^{-1}$.
We also have $\partial_y F_1 \cong \partial_y c \partial_\eta F_1 \in S(\Lambda^{1/k}, g_k)$  modulo $S(1,g_k)$  so we find
\begin{equation}\label{dyf1}
\partial_y \partial_\eta F_1 \ \cong \partial_\eta ( \partial_y c \partial_\eta F_1) \cong 0
\end{equation} 
modulo $ S(1,g_k)$ when $|\eta| \ls |\xi|^{1 - {1}/{k}}$, which gives $\partial_y^\alpha F_1 \cong \partial_y^\alpha c \partial_\eta F_1 \in S(\Lambda^{1/k}, g_k)$  modulo $S(1,g_k)$.  
In the case $ k = \infty$, we have $ r \equiv 0$, $\partial_y F_1 \cong \partial_y f = 0$ modulo $ S^0$ and we may formally put $ 1/k = 0$ in the formulas.

By \eqref{dyf1},  \eqref{f1exp}  and \eqref{df1exp} the first order terms in $\Delta y$ of~\eqref{eikeq0} are equal to
\begin{multline}\label{1y}
\lambda^\varrho\eta_0'   - \lambda^\varrho w_{0,2}y_0' - \lambda^{\varrho} w_{1,1}x_0'  +  \lambda^{\varrho} \partial_\xi f w_{1,1}  +  \lambda^{\varrho} \partial_\eta r  w_{0,2} 
\\+ 2\lambda^{ {2}/{k} + 2\varrho - 1} \eta_0\partial_\eta^2 f  w_{0,2}
 + \lambda^{ {2}/{k} + \varrho - 1} \partial_y c\partial_\eta^2 f\eta_0 - i \lambda^{ {2}/{k} + \varrho  - 1}\partial_\eta^2 f  w_{0,3} 
\end{multline}
modulo $ \Cal O(1)$ since $\partial_y F_1 \cong  \partial_y c \partial_\eta F_1 \cong  |\xi |^{{1}/{k}} \partial_y c \partial_\eta r \cong 0$ on $\st $ modulo bounded terms.  In the case $ k = \infty$, we put $ r \equiv 0$ and  $\partial_\eta^2 f \equiv 0 $. 
The terms in \eqref{1y}  vanish if
\begin{multline}\label{2y}
\eta_0'   - w_{0,2}y_0' - w_{1,1}x_0'  +  \partial_\xi f w_{1,1}  +  \partial_\eta r  w_{0,2} 
+ 2\lambda^{{2}/{k} + \varrho  - 1} \eta_0\partial_\eta^2 f  w_{0,2} 
\\ + \lambda^{{2}/{k} - 1} \partial_y c\partial_\eta^2 f\eta_0 - i \lambda^{{2}/{k} - 1}\partial_\eta^2 f  w_{0,3} = 0
\end{multline}
modulo $\Cal O(\lambda^{-\varrho})$. 
The real part of~\eqref{2y} gives
\begin{multline}\label{detaeq1}
\eta_0'  = \re w_{0,2}y_0' + \re w_{1,1}x_0'  - \re \partial_\xi f    w_{1,1}  - \re \partial_\eta r    w_{0,2} 
\\ - 2\lambda^{{2}/{k} + \varrho - 1}\re  \eta_0 \partial_\eta^2 f  w_{0,2} 
 - \lambda^{{2}/{k} - 1}  \re \partial_y c\partial_\eta^2 f\eta_0 + \lambda^{{2}/{k} - 1}\im  \partial_\eta^2 f  w_{0,3} 
\end{multline}
modulo $\Cal O(\lambda^{-\kappa}) $, and we will choose initial data $ \eta_0(0) = 0$.

By taking the imaginary part of  \eqref{2y} we find  
\begin{multline}\label{detaeq3}
\im w_{0,2}y_0' = - \im w_{1,1}x_0' + \im \partial_\xi f  w_{1,1} 
 + \im \partial_\eta r  w_{0,2}
\\ +  2\lambda^{ {2}/{k} + \varrho  - 1} \im \eta_0 \partial_\eta^2 f  w_{0,2} 
 + \lambda^{{2}/{k} - 1}  \im \partial_y c\partial_\eta^2 f\eta_0   - \lambda^{{2}/{k} - 1} \re  \partial_\eta^2 f  w_{0,3}
 \end{multline} 
modulo $\Cal O(\lambda^{-\kappa}) $. 
When $k > 2 $ we have $\partial_\eta^2 f \equiv 0 $ on $ \st$
and when $k = 2 $   we shall use  Lemma~\ref{lemclaim} with $ \eta_0 = 0$ to obtain that~\eqref{detaeq1} and~\eqref{detaeq3} are uniformly integrable if $\varrho \ll 1 $.

By using the expansions~\eqref{f1exp} and~\eqref{df1exp}, we can obtain the coefficients for the term $ \Delta x^j \Delta y^\ell$  in~\eqref{eikeq0} from the expansion of
\begin{multline}
\lambda^{\varrho}\sum_{j,\ell \ne 0}(w_{j,\ell}'  - w_{j+1,\ell}x_0'  - w_{j,\ell+1}y_0') \Delta x^j \Delta y^\ell/j!\ell! 
 + \lambda^{\varrho} \partial_\xi f  \sigma_1 +  \lambda^{\varrho} \partial_\eta r \sigma_2 
\\ + \lambda^{ {2}/{k}+ 2\varrho - 1 }  ({\eta}_{0}  +  {\sigma}_{2})\partial_\eta^2 f({\eta}_{0}  +  {\sigma}_{2})/2  
 + \lambda^{{2}/{k}+ \varrho -1}  \partial_y c \Delta y   \partial_\eta^2 f  ({\eta}_{0}  +  {\sigma}_{2})  -i \lambda^{ {2}/{k}+ \varrho  - 1 } \partial_\eta^2 f  \partial_y \sigma_2
\end{multline}
modulo $ \Cal O(1)$. In the case $ k = \infty$ we put $ r \equiv 0$ and  $\partial_\eta^2 f \equiv 0 $. 
Here the last terms can be expanded in~$ \Delta x$ and $ \Delta y$ which also  involves the $\xi $ derivatives.
Taking the coefficient for $ \Delta x^j \Delta y^\ell$ and dividing by $ \lambda^{\varrho }$ we obtain that these terms vanish if
\begin{multline}\label{wij}
w_{j,\ell}'  =  w_{j+1,\ell}x_0' + w_{j,\ell+1}y_0' 
 -  j!\ell!\Big( \partial_\xi f  \sigma_1 +   \partial_\eta r \sigma_2 
\\ + \lambda^{{2}/{k}+ \varrho -1} ({\eta}_{0}  +  {\sigma}_{2}) \partial_\eta^2 f({\eta}_{0} +  {\sigma}_{2})/2 
 + \lambda^{{2}/{k} - 1}  \partial_y c \Delta y \partial_\eta^2 f  ({\eta}_{0}  +  {\sigma}_{2}) -i  \lambda^{{2}/{k} - 1} \partial_{\eta}^2 f \partial_y \sigma_2\Big)_{j,\ell}
\end{multline}
modulo $ \Cal O(\lambda^{-\kappa})$, for some $\kappa > 0 $.

When $ k = \infty$ we find that these equations form a uniformly integrable system of nonlinear ODE. When $ k < \infty$ and  $\im w_0(t) \ge 0 $ then by using  Lemma~\ref{lemclaim} with $\eta_0 = 0 $,  $ \lambda \gg 1$ and $\varrho \ll 1 $ we obtain a uniformly integrable system, which gives a local solution near $(0, x_0,y_0,\xi_0,0)$. 
When $f(t,x_0,{\xi}_0,0) = 0 $ for $t \in I'$ when  $|I' |\ne 0$, we have assumed that $ \partial_{x}^{{\alpha}} \partial_{{\xi}}^{{\beta}} f(t,x_0,{\xi}_0,0) = 0$, $\forall \, \alpha\, \beta $,
for  $t \in I'$.  When $k=2 $ we use Lemma~\ref{lemclaim} to obtain that
$ \partial_{x}^{{\alpha}} \partial_{{\xi}}^{{\beta}}\partial_\eta^2 f(t,x_0,{\xi}_0,0)  \in   I(\lambda^{-\delta})$  for $t \in I'$, $\forall \, \alpha\, \beta $, where $I(\lambda^{-\delta}) $ is given by Definition~\ref{defI}. 
Then~\eqref{220} gives that $x_0' = \xi_0' = 0 $ on $ I' $ and \eqref{dw0} gives that $w_0' = 0$  on $ I' $. Equations~\eqref{221} and~\eqref{222} give that $ w'_{j,0} = 0$  on $ I' $ for $j \ge 2 $.
By~\eqref{wij} we find when $\ell > 0 $ that 
\begin{equation*}
w_{j,\ell}' \cong  w_{j,\ell+1}\big(y_0' - j!\, \ell ! \, \partial_\eta r \big)  \qquad \text{on $I'$ modulo   $ I(\lambda^{-\kappa})$}
\end{equation*} 
for some $\kappa > 0  $ where $y_0' = I(1) $. Since $w_{j,\ell} \equiv 0 $ when $ j + \ell > K$ and $w_{j,\ell}(0) = 0 $ when $j + \ell > 2 $ we find by recursion that  $w_{j,\ell}(t) \cong  0 $ when $j + \ell > 2 $ and $w_{j,\ell}'(t)  \cong  0 $ for  $t \in I' $ modulo   $ I(\lambda^{-\kappa})$ when  $j + \ell =  2 $. By~\eqref{detaeq3} we find that 
\begin{equation}
y_0' \cong (\im w_{0,2}(0))^{-1}\im \partial_\eta r w_{0,2}(0)   \qquad \text{on $ I'$  modulo   $ I(\lambda^{-\kappa})$} 
\end{equation}
which gives $y_0'  =  o(1) $ in $ I$, and \eqref{detaeq1} gives $ \eta_0'  \cong \re(y_0'  -  \partial_\eta r) w_{0,2}(0)   =  o(1) $ modulo   $ I(\lambda^{-\kappa})$. 
In fact, we assume that $\partial_\eta r = 0$ when  $ \im f$ vanishes of infinite order. We may choose $I' $ as the largest interval containing 0 such that $w_0  $ vanish on $I' $. 
Then in any neighborhood of an endpoint of $ I' $ there exists points where $w_0 > c \ge \lambda^{\kappa -1} $ for $ \lambda \gg 1$.

Now  $f $ and $ r$ are independent of $ y$ near the semibicharacteristic, so the  coefficients of the system of equations are independent of $y_0(t)$ modulo $  I(\lambda^{- \kappa})$.
 (If the symbols are independent of $ y$ in an arbitrarily large $ y$ neighborhood we don't need the vanishing condition on $ \partial_\eta r$.) 
Since we restrict $f $ and $ r$ to $\eta = 0 $ and these functions are independent of $ y$, the  coefficients of the system of equations are independent of $(y_0(t),\eta_0(t)) $ modulo $  I(\lambda^{- \kappa})$.
Thus for $\lambda \gg 1 $  the system has a solution $\omega_\lambda $ in a neighborhood of ${\gamma}' = \set {(t,x_0(t),y_0(t)): \ t \in I'}$. 
As before, the Lagrange error term in the Taylor expansion of~\eqref{eikeq0}   is $ \Cal O(\lambda (|x-x_0(t)| + |y - y_0(t)|)^{K+1})$.

But we have to show that $t \mapsto \im f(t,x_0(t), {\xi}_0(t),0) = f_0(t)$  changes sign from $+$ to $-$ as $t$ increases for some choice of initial values $ (t_0,x_0,{\xi}_0) $ and $w_{2,0}(0) $. 
Then we obtain that  $\im w_0(t) \ge 0$ for the solution to $ \im w_0'(t) = - \im f (t, x_0(t), \xi_0(t), 0) $ with suitable initial data.
We shall use the same argument as in Sect.~\ref{eiksect}.  
Observe that \eqref{220}--\eqref{222} only involve $x_0 $, $ \xi_0$ and $w_{j,0} $ with $j \le K $ and are uniformly integrable.
When the sign change is of first order we can use Remark~\ref{initclaim} to choose $w_{2,0}(0) $ so that $|\big (x'_0(0), \xi_0'(0)\big )| \ll 1 $ and  $\im w_{2,0}(0) > 0$. We have
\begin{equation}\label{df00}
f_0'(0) = \im \partial_t f (0, x_0,{\xi}_0, 0)  + \im \partial_x f (0, x_0,{\xi}_0, 0) x_0'(0)   + \im \partial_\xi f (0, x_0,{\xi}_0, 0)  \xi_0'(0)
\end{equation}
and since $\im \partial_t f(0, x_0,{\xi}_0, 0) < 0 $ we obtain that $t \mapsto f_0(t) $ has a sign change from $+$ to $-$ of first order  as $t$ increases if $ |\big (x'_0(0), \xi_0'(0)\big )| \ll 1$. 

We also have to consider the general case when $t \mapsto \im f(t,x_0,{\xi}_0,0)$
changes sign from $+$ to $-$ of higher order as $ t$ increases near $ I'$. If  there exist points  in any $(x,\xi) $ neighborhood
of~${\Gamma}' $ for $\eta=  {0}$
where $\im f = 0 $ and $\partial_t \im f < 0$, then by changing the initial data we can as before construct approximate solutions  for which $t \mapsto
\im w_0(t)$ has a local minimum equal to 0 on~$I$ when $\lambda \gg 1 $.
Otherwise we have  $\im \partial_t f \ge 0$ when $\im f = 0$ in some $(x,\xi) $ neighborhood of~${\Gamma}'$. Then we take the asymptotic solution $ w(t) = (x_0(t),\xi_0(t), w_{j,0}(t) )$  to \eqref{220}--\eqref{222}  when $\lambda \to \infty $ with $ \eta_0(t) \equiv 0$ and initial data $w = (x,{\xi})$ but fixed $w_{2,0}(0) $ and  $w_{j,0}(0)$. This gives a change of coordinates $(t,x,\xi) \mapsto (t,w(t))  $ near ${\Gamma}'$. 
In fact, the solution is constant on $ \Gamma'$ when $|I'| \ne 0 $ since all the coefficients of \eqref{220}--\eqref{222} vanish there.
By the invariance of  
condition (${\Psi}$) there would then exist a change of sign of $t
\mapsto \im f(t,w(t),0) $ from 
$+$ to $-$ in any neighborhood of~${\Gamma}'$. 
Thus by choosing suitable initial values  $ (t_0,x_0,{\xi}_0) $ arbitrarily close to~${\Gamma}'$ we obtain that   $t \mapsto  f_0(t)$  changes sign from  $+$ to $-$  as $t$ increases.

Since $\im w_0'(t) = - \im f (t,x_0(t), {\xi}_0(t),0) $ we obtain that
\begin{equation}\label{11a}
e^{i{\lambda}{\omega}_\lambda(t,x)} \le e^{-c({\lambda}(\im w_0(t) + c|{\Delta}x |^2) + {\lambda}^\varrho|{\Delta}y |^2)}
\qquad | {\Delta}x| + |{\Delta}y | \ll 1
\end{equation}
where  $\min_I \im w_0(t) = 0$ with $\im w_0(t) >
0$ for $t \in \partial I$. 
This gives the following result.

\begin{prop}\label{eikprop00}
Let ${\Gamma}' = \set{(t,x_0,y_0; 0,{\xi}_0, 0): \ t \in I'}$ so that
$\partial_{x}^{{\alpha}} \partial_{{\xi}}^{{\beta}} f(t,x_0,{\xi}_0, 0) = 0$,  $\forall \, \alpha\, \beta $,
for all $t \in I'$ in the case  $|I' |\ne 0$.
Then for $ \varrho \ll 1$ we may solve~\eqref{eikeq0} modulo $\Cal O(\lambda(|x-x_0(t)| + |y - y_0(t)|)^M)$, $\forall\, M$,
with ${\omega}_{\lambda}(t,x)$  given by~\eqref{omegaexp01} in a neighborhood of
${\gamma}' = \set {(t,x_0(t),y_0(t)): \ t \in I'}$.
When $t \in I'$ we find that $(x_0(t), {\xi}_0(t)) = (x_0,{\xi}_0)$,  $w_0(t) = 0$, 
$w_{1,1}(t) \cong 0$  and $w_{j,k}(t) \cong 0$ for $j + k > 2$ modulo $ \Cal O(\lambda^{-\kappa})$ for some $ \kappa > 0$, $\im w_{2,0}(t) > 0$ and $\im w_{0,2}(t) > 0$. 

If $t \mapsto f(t,x_0,{\xi}_0, 0)$ changes sign from $+$ to $-$
as $t$ increases near~$I'$ then by choosing initial values we may obtain that
$\set { (t,x_0(t),y_0(t);0, {\xi}_0(t),0): \ t \in I}$ is
arbitrarily close to~${\Gamma}$,  $\min_{t \in I}
\im w_0(t) = 0$ and $\im w_0(t) > 0$ for $t  \in \partial I_0$.
\end{prop}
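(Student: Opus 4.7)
The plan is to substitute the ansatz \eqref{omegaexp01} into the eikonal equation \eqref{eikeq0} and collect coefficients of the Taylor expansion at $(x_0(t), y_0(t))$ in powers of $\Delta x = x - x_0(t)$ and $\Delta y = y - y_0(t)$ up to order $K$. Using the expansions \eqref{f1exp} and \eqref{df1exp}, together with the facts that $\partial_\eta F_1 \circ \chi = |\xi|^{1/k}\partial_\eta r \in S^0$ on $\Sigma_2$ and that $\partial_y^\alpha F_1 \cong \partial_y^\alpha c\,\partial_\eta F_1$ modulo $S(1,g_k)$, the vanishing of the coefficient of $\Delta x^j\,\Delta y^\ell$ modulo $\mathcal O(\lambda^{-\kappa})$ gives the coupled nonlinear ODE system \eqref{dw0}, \eqref{220}--\eqref{222}, \eqref{detaeq1}, \eqref{detaeq3}, \eqref{wij} for $x_0(t),\xi_0(t),\eta_0(t),y_0(t),w_{j,\ell}(t)$. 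I would then set initial data $(x_0(0),\xi_0(0)) = (x_0,\xi_0)$, $\eta_0(0) = 0$, $w_0(0) = 0$, $w_{1,1}(0) = 0$, $w_{j,\ell}(0) = 0$ for $j + \ell > 2$, and $\im w_{2,0}(0), \im w_{0,2}(0) > 0$; by continuity $\im w_{2,0}(t), \im w_{0,2}(t)>0$ and hence $y_0'$ is well defined in a neighborhood.

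The main obstacle is that several coefficients carry explicit positive powers of $\lambda$ (namely $\lambda^{2/k+\varrho-1}$ and $\lambda^{2/k-1}$ in \eqref{detaeq1}, \eqref{detaeq3}, \eqref{wij}), which blow up as $\lambda\to\infty$ if $k = 2$. The fix is to invoke Lemma~\ref{lemclaim} with $\eta_0 = 0$: we only need to integrate on the set where $\lambda\im w_0(t)\lesssim \lambda^\kappa$, and on this set the conditions~\eqref{deta0}--\eqref{ddeta0} force $\lambda^{1/k}\partial_{x,\xi}^\alpha\partial_\eta f$ and $\lambda^{2/k-1}\partial_{x,\xi}^\alpha\partial_\eta^2 f$ to be in $I(\lambda^{-\delta})$ for some $\delta>0$. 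For $\varrho \ll 1$ this yields a uniformly integrable system, giving a local $C^\infty$ solution near $(0,x_0,y_0,\xi_0,0)$ with the claimed structure. When $k = \infty$ we have $r \equiv 0$ and $\partial_\eta^2 f \equiv 0$, so the bad terms disappear and the system is trivially integrable.

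For the case $|I'| \ne 0$, the hypothesis $\partial_x^\alpha\partial_\xi^\beta f(t,x_0,\xi_0,0) = 0$ on $I'$ forces all right-hand sides in \eqref{220}--\eqref{222} to vanish modulo $I(\lambda^{-\kappa})$, so by recursion $x_0'(t) = \xi_0'(t) = 0$ and $w_{j,0}'(t) \cong 0$ on $I'$; then \eqref{dw0} gives $w_0'(t)\cong 0$ on $I'$. Iterating \eqref{wij} from the top index downward, using $w_{j,\ell} \equiv 0$ for $j + \ell > K$ and the initial conditions $w_{j,\ell}(0) = 0$ for $j + \ell > 2$, yields $w_{j,\ell}(t) \cong 0$ for $j + \ell > 2$ and $w_{j,\ell}'(t)\cong 0$ for $j+\ell = 2$, all modulo $I(\lambda^{-\kappa})$. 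Inserting into \eqref{detaeq3} and \eqref{detaeq1} gives $y_0' = o(1)$ and $\eta_0' = o(1)$ modulo $I(\lambda^{-\kappa})$, and since $f$, $r$ are independent of $y$ near $\Gamma'$ and restricted to $\eta = 0$ the whole system is independent of $(y_0,\eta_0)$ modulo $I(\lambda^{-\kappa})$; by scaling, the $I(\lambda^{-\kappa})$ perturbations do not destroy solvability for $\lambda\gg 1$.

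It remains to arrange that $t\mapsto \im w_0(t) \ge 0$ with minimum $0$. If the sign change of $t\mapsto \im f(t,x_0,\xi_0,0)$ at $t = 0$ is of first order, Remark~\ref{initclaim} lets us choose $w_{2,0}(0)$ with $\im w_{2,0}(0)>0$ and $|(x_0'(0),\xi_0'(0))|\ll 1$, so that \eqref{df00} with $\im\partial_t f(0,x_0,\xi_0,0) < 0$ preserves the first-order sign change of $f_0(t) = \im f(t,x_0(t),\xi_0(t),0)$, and \eqref{2a} then gives a local minimum of $\im w_0$. For a higher-order sign change, one either finds points arbitrarily close to $\Gamma'$ where $\im f = 0$ and $\partial_t \im f < 0$ (and the previous argument applies), or else $\partial_t \im f \ge 0$ whenever $\im f = 0$ near $\Gamma'$; in the latter case we freeze the $(y_0,\eta_0)$ variables and use the asymptotic flow $(x,\xi)\mapsto (x_0(t),\xi_0(t))$ from \eqref{220}--\eqref{222} as a symplectic change of coordinates near $\Gamma'$, and invoke the invariance of condition $(\Psi)$ via \cite[Theorem~26.4.12]{ho:yellow} to produce, arbitrarily close to $\Gamma'$, new initial data for which $t \mapsto \im f(t,w(t),0)$ does change sign from $+$ to $-$. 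Subtracting the (constant in $t$) minimum value from $w_0$ and cutting off where $\im w_0 > 0$ (which introduces only $\mathcal O(\lambda^{-M})$ errors by \eqref{11a}) delivers the required phase function.
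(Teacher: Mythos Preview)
Your proposal is correct and follows essentially the same approach as the paper: you derive the same ODE system \eqref{dw0}, \eqref{220}--\eqref{222}, \eqref{detaeq1}, \eqref{detaeq3}, \eqref{wij} from the Taylor expansion of the eikonal equation, control the $k=2$ terms via Lemma~\ref{lemclaim} with $\eta_0=0$, run the same downward recursion on $I'$ when $|I'|\ne 0$, and reproduce the first-order versus higher-order sign-change dichotomy via Remark~\ref{initclaim} and the invariance of $(\Psi)$. One minor imprecision: in the $\eta_0=0$ case the term $\lambda^{1/k}\partial_\eta f$ actually vanishes on $\Sigma_2$ (since $\partial_\eta f\restr{\eta=0}=0$), so the only dangerous factor to tame with Lemma~\ref{lemclaim} is $\lambda^{2/k-1}\partial_\eta^2 f$ when $k=2$.
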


\section{The Transport Equations}\label{trans}

Next, we shall solve the transport equations for the amplitudes $\phi  \in C^\infty $, first in the case when $k < \infty $ and $\eta_0 \ne 0 $ as in Sect.~\ref{eiksect}.
Then we use the phase function~\eqref{omegaexp}, by expanding the transport equation 
using~\eqref{dxomega}--\eqref{F1exp} and \eqref{detaf1} we find that it is given by the following terms in~\eqref{exp0}:
\begin{multline}\label{transexp}
  D_t{\phi}  + \left(\lambda^{1/k}\partial_{\eta} f(t,x,\xi_0 + \sigma_0, \eta_0)
 + \lambda^{ {2}/{k} + \varrho -1 }\partial_{\eta}^2 f(t,x,\xi_0 + \sigma_0, \eta_0)\sigma_2 \right) D_{y} {\phi} 
   \\ + \lambda^{{2}/{k}-1}\partial^2_{\eta} f(t,x,\xi_0 + \sigma_0, \eta_0) D_{y}^2 {\phi}/2 
 + \partial_{\xi} f(t,x,\xi_0 + \sigma_0, \eta_0)D_x \phi \\ + F_0(t,x,y,D_y){\phi} = 0
\end{multline}
modulo $ \Cal O(\lambda^{-\kappa})$ for some $\kappa > 0 $ near ${\gamma}' = \set{(t,x_0(t),y_0(t)):\ t \in I'}$ given by Proposition~\ref{eikprop0}. 
Here $ f \in S(\Lambda, g_k) $,  $0 < \varrho < 1/2 $, $ x_0(t)$, $ y_0(t)$, $ \xi_0(t)$, $\eta_0(t) $ and $ \sigma_j$ are given by \eqref{omegaexp}, \eqref{dxomega} and \eqref{dyomega}, and $F_0(t,x,y,D_y)$ is a uniformly bounded first order differential operator. 
In fact, by Proposition~\ref{prepprop} we have that $\partial_y F_1 \cong \partial_{y}c \partial_{\eta} F_1 \in S(\Lambda^{1/k},g_k) $ modulo $S(1,g_k) $ which gives that
$\partial_y\partial_{\eta} F_1(t,x,y,\lambda \partial_{x,y} {\omega}_\lambda)$ is uniformly bounded
by Remark~\ref{symbrem}.

We shall choose the initial value of the amplitude ${\phi} = 1$ for $t =  t_0$ such that $\im w_0(t_0) = 0$, and because of \eqref{11} we only have to solve the equation modulo  $\Cal O(\lambda^\mu (|x-x_0(t)|+ |y- y_0(t)|)^M)$ for some $ \mu $ and any $ M$.  We first solve~\eqref{transexp}, but because of the lower order terms in~\eqref{transexp} 
we will expand $ \phi =  \phi_0 + \lambda^{-\kappa}\phi_1 + \lambda^{-2\kappa}\phi_2 + \dots$ in an asymptotic series with $\phi_j \in C^\infty $, which we will use in \eqref{udef}.

By making  Taylor expansions in $ \Delta x = x-x_0(t)$ and $\Delta y = y- y_0(t)$  of  $\phi_0  $ and  the coefficients of  \eqref{transexp} we obtain a system of ODE's in the Taylor coefficients of $ \phi_0$. 
Observe that the Lagrange error terms of the Taylor expansions in the transport equation give terms that are 
$\Cal O( \lambda^{1/k}(|x-x_0(t)|+ |y- y_0(t)|)^{M+1} )$ since $\varrho \le 1/k $. 
By taking  $\varrho $ small enough and using 
Lemma~\ref{lemclaim} with Remark~\ref{rrem}  as in  Sect.~\ref{eiksect}, we may assume that this system has uniformly integrable coefficients.  Thus we get a uniformly bounded solution $ \phi_0 $ to~\eqref{transexp} 
modulo  $\Cal O(\lambda^{1/k}(|x-x_0(t)|+ |y- y_0(t)|)^M + \lambda^{-\kappa})$ for any $ M$ such that ${\phi}_0(t_0) \equiv 1$. By induction we can successively make the lower order terms in \eqref{transexp} to be  $\Cal O(\lambda^{1/k}(|x-x_0(t)|+ |y- y_0(t)|)^M+ \lambda^{-\ell\kappa})$ by solving \eqref{exp0} for $ \phi_\ell$  with right hand side depending on $ \phi_j$, $ j < \ell$, such that  $ \phi_\ell(t_0) \equiv 0$. Thus, we get a solution to \eqref{exp0} modulo  $\Cal O(\lambda^{1/k} (|x-x_0(t)|+ |y- y_0(t)|)^M + \lambda^{-N})$ for any $ M$ and $ N$.

In the case $\eta_0 = 0 $, we use the  phase function~\eqref{omegaexp01}. By expanding \eqref{exp0} and using \eqref{dxomega0}--\eqref{df1exp}, the transport equation for $ \phi$ becomes:
\begin{multline}\label{transexp2}
D_t{\phi}  + \left( \partial_{\eta} F_1 (t,x,\xi_0 + \sigma_0, 0) + \lambda^{\varrho }\partial_{\eta}^2 F_1 (t,x,\xi_0 + \sigma_0, 0) ( \eta_0 + \sigma_2) \right ) D_{y} {\phi} 
\\ + \partial_{\xi} F_1 (t,x,\xi_0 + \sigma_0, 0) D_{x} {\phi_0} + \partial^2_{\eta} F_1(t,x,\xi_0 + \sigma_0,0 ) D_{y}^2 {\phi}/2 
\\+ F_0(t,x,y,D_y){\phi} = 0
\end{multline}
near ${\gamma}' = \set{(t,x_0(t),y_0(t)):\ t \in I'}$ modulo $ \Cal O(\lambda^{-\kappa})$ for some $\kappa > 0 $ if $\varrho \ll  1 $.  Since  $\partial_y \partial_{\eta} F_1 \cong \partial_{\eta}(\partial_{y}c \partial_{\eta} F_1) $ is bounded we find that $F_0(t,x,y,D_y)$ is a  uniformly first order bounded differential operator by  Remark~\ref{symbrem0}. 
On $ \st $ we have $\partial_{\eta}F_1  \in S^{0}$, $\partial_{\eta}^2 F_1 \in S^{-1}$ when  $ k > 2$, and $\partial_{\eta}^2 F_1 = \partial_{\eta}^2 f $ when $ k = 2$.
We shall solve  \eqref{transexp2}  with initial value  ${\phi} \equiv 1$ when $t =  t_0$.

As before we expand  $ \phi =  \phi_0 + \lambda^{-\kappa}\phi_1 + \lambda^{-2\kappa}\phi_2 + \dots$ in an asymptotic series with $\phi_j \in C^\infty $, which we will use in \eqref{udef}. Observe that the Lagrange term of the Taylor's expansions in the transport  equation is $\Cal O(\lambda^{\varrho} (|x-x_0(t)|+ |y- y_0(t)|)^K )$ for any $ K$. 
By taking  the Taylor expansions  in $ \Delta x$ and $\Delta y$ of $ \phi_0$ and the coefficients of \eqref{transexp2}, we obtain a system of ODE's in the Taylor coefficients of $\phi_0 $.  As in Sect.~\ref{eta0}, we find from Lemma~\ref{lemclaim} that this system has uniformly integrable coefficients when  $\varrho \ll 1$. So by choosing  ${\phi}_0(t_0) \equiv 1$ we obtain a uniformly bounded solution  to~\eqref{transexp2} 
modulo $\Cal O(\lambda^{\varrho}(|x-x_0(t)|+ |y- y_0(t)|)^M + \lambda^{-\kappa})$ for any $ M$. 

We can successively make the lower order terms in \eqref{exp0}  to be  $\Cal O(\lambda^{\varrho}(|x-x_0(t)|+ |y- y_0(t)|)^M + \lambda^{-\ell \kappa})$ by solving the equation \eqref{transexp2} for~$\phi_\ell $ with right hand side depending on $ \phi_j$ for $j < \ell $ such that $ \phi_\ell(t_0) \equiv 0$. 
Thus we find that~\eqref{exp0} holds modulo  $\Cal O(\lambda^{\varrho} (|x-x_0(t)|+ |y- y_0(t)|)^M+ \lambda^{-N})$  for any $ M$ and $ N$ and we have  ${\phi}(t,x,y) = 1$ when $t =  t_0$.

\begin{prop}\label{transprop}
Assume that Propositions~\ref{prepprop},  \ref{eikprop0} and~\ref{eikprop00} hold. 
Then for $\varrho \ll 1 $ and any $ M$ and $ N$ we can solve the transport 
equations so that the expansion  \eqref{exp0}  is 
$ \Cal O(\lambda (|x-x_0(t)|+ |y- y_0(t)|)^M + \lambda^{-N} )$  near  $\gamma' = \set {(t,x_0(t),y_0(t)): \ t \in I'}$. We have ${\phi}\in S(1, g_{1-\varrho})$ uniformly with support
in a neighborhood of  $\gamma'$ where $x -x_0(t) = \Cal O(\lambda^{\varrho - {1}/{k}}) $, $y-y_0(t) = \Cal O(\lambda^{-{\varrho}/{4}}) $ and  $\im w_0(t) =  \Cal O( \lambda^{\varrho - 1}) $. We also have
${\phi}(t_0,x_0(t_0),y_0(t_0)) = 1$, $\lambda \gg 1 $, for some $t_0
\in I'$ such that   $\im w_0(t_0) = 0$.
\end{prop}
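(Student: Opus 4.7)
The plan is to solve the transport equations that arise from setting to zero the sub-leading terms in the asymptotic expansion \eqref{exp0}, after the eikonal equation has been handled by Proposition~\ref{eikprop0} or Proposition~\ref{eikprop00}. I will treat the cases $\eta_0 \ne 0$ with $k<\infty$ and $\eta_0 = 0$ (including $k=\infty$) in parallel, since the strategy is the same: substitute the phase function of the form \eqref{omegaexp} or \eqref{omegaexp01} into \eqref{exp0}, use Remark~\ref{detarem} to expand $F_1$, $\partial_\eta F_1$, and $\partial_\eta^2 F_1$ around $(\xi_0+\sigma_0,\eta_0)$, and retain the terms that do not already vanish by the eikonal equation. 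This yields a first-order transport operator in~$t$ of the schematic shape \eqref{transexp} or \eqref{transexp2}, together with a bounded differential operator $F_0(t,x,y,D_y)$ coming from $\partial_y F_1 \cong \partial_y c\,\partial_\eta F_1$; boundedness of its coefficients is secured by Remark~\ref{symbrem} (resp.\ Remark~\ref{symbrem0}).

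The plan is then to postulate an asymptotic series $\phi \sim \phi_0 + \lambda^{-\kappa}\phi_1 + \lambda^{-2\kappa}\phi_2 + \dots$ with $\phi_j \in C^\infty$ supported in a small neighborhood of $\gamma'$, and determine each $\phi_\ell$ by Taylor expanding in $\Delta x = x - x_0(t)$ and $\Delta y = y - y_0(t)$ to high order. Matching coefficients converts the PDE into a system of linear ODEs in~$t$ for the Taylor coefficients of $\phi_\ell$; the source is linear in $\phi_j$, $j < \ell$, and I will choose initial data $\phi_0(t_0) \equiv 1$, $\phi_\ell(t_0) \equiv 0$ for $\ell \ge 1$ at a point $t_0 \in I'$ where $\im w_0(t_0) = 0$. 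The Lagrange remainders from truncating the Taylor expansions contribute errors of size $\Cal O(\lambda^{1/k}(|\Delta x| + |\Delta y|)^{M+1})$ (respectively $\Cal O(\lambda^{\varrho}(|\Delta x| + |\Delta y|)^{M+1})$), which is acceptable in view of the form of the required conclusion.

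The main obstacle is the uniform integrability of the coefficients of the ODE system as $\lambda \to \infty$. The potentially unbounded coefficients are precisely the terms involving $\lambda^{1/k}\partial_\eta f$, $\lambda^{2/k-1}\partial_\eta^2 f$, and their derivatives in $(x,\xi)$ evaluated along $(t,x_0(t),\xi_0(t),\eta_0)$. Here I will invoke Lemma~\ref{lemclaim}, together with Remark~\ref{rrem} to pass from $\eta_0$ to $\eta_0(t)$: since the relevant support of~$\phi$ (where $e^{i\lambda\omega_\lambda}$ is not already $\Cal O(\lambda^{-N})$) lies in the region $\im w_0(t) \ls \lambda^{\varrho-1}$, Lemma~\ref{lemclaim} forces the time-integrals of these coefficients over that region to be $\Cal O(\lambda^{-\delta})$ for some $\delta > 0$, provided $\varrho$ is chosen sufficiently small relative to the exponent $\varepsilon$ from \eqref{deta0}--\eqref{ddeta0}. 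Standard ODE theory then gives uniformly bounded $\phi_0$, and by induction uniformly bounded $\phi_\ell$ for every $\ell$.

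To conclude I will multiply the formal sum by smooth cutoffs localizing to the region where $|x-x_0(t)| \ls \lambda^{\varrho-1/k}$, $|y-y_0(t)| \ls \lambda^{-\varrho/4}$, and $\im w_0(t) \ls \lambda^{\varrho-1}$; by \eqref{11} and \eqref{11a} this excision contributes only $\Cal O(\lambda^{-N})$ to \eqref{exp0}. In the chosen box the anisotropic scaling $\Delta x = \Cal O(\lambda^{\varrho-1/k})$, $\Delta y = \Cal O(\lambda^{-\varrho/4})$ shows that $\phi$ and all its derivatives satisfy the $S(1,g_{1-\varrho})$ estimates uniformly in~$\lambda$, and the total error in \eqref{exp0} is $\Cal O(\lambda(|\Delta x| + |\Delta y|)^M + \lambda^{-N})$ for any prescribed $M$ and $N$, which is the desired conclusion. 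The normalization $\phi(t_0,x_0(t_0),y_0(t_0)) = 1$ follows directly from the choice of initial data.
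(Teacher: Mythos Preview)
Your proposal is correct and follows essentially the same approach as the paper: derive the transport equation \eqref{transexp}/\eqref{transexp2} by expanding $F_1$ via Remark~\ref{detarem}, expand $\phi$ asymptotically and Taylor-expand in $(\Delta x,\Delta y)$ to obtain a hierarchy of ODE systems whose uniform integrability is secured by Lemma~\ref{lemclaim} and Remark~\ref{rrem}, solve inductively with the stated initial data, and then cut off in the indicated $\lambda$-dependent box using \eqref{11}/\eqref{11a}. The paper additionally verifies explicitly that the cutoff $\psi\bigl((x-x_0(t))\lambda^{1/k-\varrho},(y-y_0(t))\lambda^{\varrho/4}\bigr)$ and $\chi\bigl(\im w_0(t)\lambda^{1-\varrho}\bigr)$ lie in $S(1,g_{1-\varrho})$, which requires controlling $t$-derivatives via the bounds $x_0'\in C^\infty$ uniformly and $y_0'=\Cal O(\lambda^{1/k})$ from the eikonal system; you should make this check precise, but otherwise your argument matches the paper's.
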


In fact, we obtain this by cutting off the solution $ \phi$ near  $\gamma'$. The cutoff in $ (x,y)$ can be done for $\varrho \ll 1/k $  by the cutoff function 
\begin{equation*}
 \psi \big((x -x_0(t))\lambda^{{1}/{k} - \varrho}, (y -y_0(t))\lambda^{{\varrho}/{4}} \big) \in S(1,g_{1 - \varrho})
\end{equation*}
where $\psi(x,y)\in C^\infty_0 $ such that $\psi = 1 $ in a neighborhood of the origin. In fact,
differentiation in $ x$ and $y $ gives factors that are $\Cal O(\lambda^{{1}/{k} - \varrho} + \lambda^{{\varrho}/{4}}) =   \Cal O(\lambda^{{1} - \varrho})$.  Differentiation in $ t$ gives factors $x_0' \lambda^{{1}/{k} - \varrho} $ and $y_0' \lambda^{{\varrho}/{4}} $. Here $x_0' \in C^\infty $ uniformly by \eqref{2} and  \eqref{220},  and $y_0' =  \Cal O(\lambda^{1/k}) $ by  \eqref{detaeq0} and  \eqref{detaeq3}. Repeated differentiation of $y_0' $ gives at most factors $ \Cal O(\lambda^{1/k}) $ by \eqref{detaeq}, \eqref{detaeq0}, \eqref{wjleq}, \eqref{2y} and \eqref{wij}.

The cutoff in $ t$ can be done where $\im w_0(t) \cong \lambda^{\varrho - 1}$ by the function $\chi \big (\im w_0(t)\lambda^{1 - \varrho} \big ) \in S(1, \lambda^{2-2\varrho}dt^2) $ with $ \chi \in   C_0^\infty(\br)$ such that $\chi = 1 $ near 0. By~\eqref{11}  and~\eqref{11a}  the cutoff errors will be $ \Cal O(\lambda^{-N}) $ for any $ N$. We obtain that  ${\phi}(t,x,y)\in S(1,g_{1-\varrho})$ uniformly, ${\phi}(t_0,x_0(t_0),y_0(t_0)) = 1$ and  $\im w_0(t_0) = 0$  for some $t_0 \in I' $.

\section{Proof of Lemma  \ref{lemclaim}}\label{lempf}  

Observe that  $k < \infty $ and that if  Lemma~\ref{lemclaim} holds for some $\delta$ and
$C$, then it trivially holds for smaller~$\delta$ and $\kappa $ and larger~$C$. 
Assume that~\eqref{detaass1}  (or \eqref{detaass2} when  $k=2 $) holds at $t$, by switching $t$ and $-t$ we may
assume $t > 0$. 
Assume that $ \im w_0(t) \ge 0$ satisfies $\im w_0'(t) = - \im f(t,x_0(t),\xi_0(t),\eta_0)$ and put 
\begin{align}
&f_0(t) =  | \im f(t,x_0(t),\xi_0(t),\eta_0)| \\
&f_1(t) =  | \partial_{x,\xi}^\alpha \partial_{\eta} f(t,x_0(t),\xi_0(t),\eta_0)| \\
&f_2(t) =  |\partial_{x,\xi}^\alpha \partial_{\eta}^2 f(t,x_0(t),\xi_0(t),\eta_0)|
\end{align}
for a fixed $ \alpha$.

We shall first consider the case when $ \im w_0(t)$ has a zero of \emph{finite order} at $ t = 0$. Then since $ 0$ is a minimum, $ \im w_0'(t)$ has a sign change of  finite order from $ -$  to $ +$ at  $t= 0$.  Since $t\im w_0'(t) \ge 0 $ we have that $ \im w_0(t) = \int_0^t f_0(s) \, ds$ for $t > 0 $ so \eqref{detaass1}, \eqref{deta0} and the Cauchy-Schwarz inequality give
\begin{equation}
\lambda^{-{1}/{k} -\delta} \ls \int_0^t f_1(s) \,ds \ls \int_0^t f^{{1}/{k} + \varepsilon}_0(s) \,ds \ls \im w_0(t)^{{1}/{k} + \varepsilon}
\end{equation}
for $0 < t \ll 1 $. Thus $\im w_0(t)  \gs \lambda^{-({1 + k\delta})/({1 + k\varepsilon})}  $ and since $\delta < \varepsilon $ we obtain $\lambda \im w_0(t) \gs {\lambda}^{\kappa}$  for some $\kappa > 0 $.

In the case when $ k = 2$ and \eqref{detaass2} holds, we similarly find from \eqref{ddeta0} that
\begin{equation}
\lambda^{-\delta}  \ls \int_0^t f_2(s) \,ds \ls \int_0^t f^{ \varepsilon}_0(s) \,ds \ls \im w_0(t)^{\varepsilon} \qquad 0 <  t  \ll 1
\end{equation}
which implies that $\lambda \im w_0(t) \gs \lambda^{1- {\delta}/{\varepsilon} } \gs {\lambda}^{\kappa}$  for some $\kappa > 0 $  since $\delta < \varepsilon $.

Next we consider the general case when $\im w_0(t) $ vanishes of \emph{infinite order} at $ t=0$, then $f_0(t) $ also vanishes of infinite order. 
For $\varepsilon \ge 0 $ let $I_\varepsilon $ be the maximal interval containing 0 such that $\im w_0 \le \varepsilon $ on $I_\varepsilon $. By assumtion \eqref{deta0} (and \eqref{ddeta0} when $k = 2 $) holds in a neighborhood $ I$ of $I_0 $. By continuity, we have $I_\varepsilon \downarrow I_0 $ when $\varepsilon \downarrow 0 $. Since $ \im w_0 = \varepsilon$ on $\partial I_\varepsilon $ where $ \varepsilon \gs \lambda^{\kappa - 1} = o(1) $  for $\lambda \gg 1 $ it suffices to prove the result in $ I$ for large enough $ \lambda$.
Observe that if $f_1 \ll \lambda^{-{1}/{k} - \delta} $ and $f_2 \ll \lambda^{1-{2}/{k} - \delta} $
in $ [0,t]$ then neither~\eqref{detaass1} nor~\eqref{detaass2} can hold.
If for  some $s \in
[0, t]$ we have that $f_1(s) \gs \lambda^{-{1}/{k} - \delta} $ 
(or $f_2(s) \gs \lambda^{- \delta}  $  when $ k = 2$) 
then by~\eqref{deta0} we find that 
\begin{equation}
\lambda^{-{1}/{k}  - \delta} \ls f_1(s) \ls |f_0(s)|^{{1}/{k}  +  \varepsilon }
\end{equation}
(or  $\lambda^{- \delta} \ls f_2(s) \ls |f_0(s)|^{ \varepsilon } $  by~\eqref{ddeta0}).
Since $ \delta <  \varepsilon $ we find that in both cases 
$ f_0(s)  \ge c \lambda^{-1 + \varrho}$ for some $ \varrho > 0$ and $c > 0 $.
Now we define $t_0$ as the smallest $t > 0$  such that $|\im w_0' (t_0)| =  f_0(t) \ge c\lambda^{-1 + \varrho} $. 
Since $ f_0(t)$ vanishes of infinite order at $ $t=0, we find that $ f_0(t_0)\le C_N|t_0|^N$ for any $ N \ge 1$,
which gives $|t_0| \gs \kappa^{1/N} $. 
Thus, we can use Lemma~\ref{intlem} below with $ \kappa =  c \lambda^{-1 + \varrho}$ for $\lambda \gg 1 $ to obtain that 
\begin{equation}
\max_{0 \le s \le t_0} \im w_0(s) \gs {\kappa}^{1 + {1}/{N}} \cong {\lambda}^{(-1 + \varrho)(1 + {1}/{N})} 
\end{equation}
where $(-1 + \varrho)(1 + {1}/{N}) = -1 + \varrho - (1-\varrho)/{N} > -1$ if we choose $N > {1}/{\varrho}  - 1 $, which gives the result. \qed

\begin{lem}\label{intlem}
Assume that $0 \le F(t) \in C^\infty$ has local minimum at $t = 0$,
and let $I_{t_0}$ be the closed interval
joining $0$ and $t_0\in \br$. If 
$$\max_{I_{t_0}}|F'(t)| = |F'(t_0)| = {\kappa}\le 1$$ 
with $|t_0| \ge c{\kappa}^{\varrho}$ for some  ${\varrho} > 0$ and $c > 0 $,
then we have $\max_{I_{t_0}}  F(t) \ge 
C_{\varrho, c}{\kappa}^{1+{\varrho}}$.
The constant $C_{\varrho, c}> 0$ only depends on
${\varrho}$, $ c$  and the bounds on $F$ in $C^\infty$.
\end{lem}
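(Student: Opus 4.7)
The plan is to deduce Lemma~\ref{intlem} from a compact-interval Landau--Kolmogorov interpolation inequality. After replacing $F(t)$ by $F(-t)$ if needed, I take $t_0 > 0$ and write $L = t_0 \ge c\kappa^{\varrho}$; the standing $C^\infty$ bound on $F$ provides a constant $M_N = \mn{F^{(N)}}_{L^\infty(I_{t_0})}$ for every integer $N$. The hypothesis $F \ge 0$ lets me identify $A := \max_{I_{t_0}} F$ with $\mn{F}_{L^\infty(I_{t_0})}$, which is what enters the interpolation bound.

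The main ingredient is the standard inequality: for each integer $N \ge 2$ there exists $K_N$ such that for any $g \in C^N([0,1])$,
\begin{equation*}
  \mn{g'}_\infty \le K_N \bigl(\mn{g}_\infty + \mn{g}_\infty^{1-1/N}\mn{g^{(N)}}_\infty^{1/N}\bigr),
\end{equation*}
which follows from Taylor's formula and the mean value theorem on $[0,1]$. A rescaling $g(s) = F(Ls)$ translates this into
\begin{equation*}
  \mn{F'}_{L^\infty([0,L])} \le K_N \bigl(L^{-1}A + A^{1-1/N}M_N^{1/N}\bigr).
\end{equation*}
Since $\mn{F'}_{L^\infty([0,L])} = \kappa$ by hypothesis, I get $\kappa \le K_N(L^{-1}A + A^{1-1/N}M_N^{1/N})$.

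I then fix an integer $N \ge 1 + 1/\varrho$, so that $N/(N-1) \le 1+\varrho$, and split on which of the two terms on the right dominates. If $L^{-1}A \ge \kappa/(2K_N)$, then $A \ge L\kappa/(2K_N) \ge (c/(2K_N))\kappa^{1+\varrho}$ using $L \ge c\kappa^{\varrho}$. If instead $A^{1-1/N}M_N^{1/N} \ge \kappa/(2K_N)$, then $A \ge C_N' \kappa^{N/(N-1)}$, and since $\kappa \le 1$ and $N/(N-1) \le 1+\varrho$ this is $\gs \kappa^{1+\varrho}$. In either case $A \ge C_{\varrho,c}\kappa^{1+\varrho}$, where $C_{\varrho,c}$ depends only on $\varrho$, $c$, and the $C^\infty$ bounds on $F$ (through $M_N$ for this one choice of $N$).

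The main obstacle is not deep: it is merely the bookkeeping of the $L$-dependence in the interpolation inequality and the calibration of $N$ against $\varrho$. Notably, the hypothesis that $F$ has a local minimum at $0$ plays no role in the argument itself; it appears only in the applications of the lemma, where it forces the minimum value to be attained inside the interval.
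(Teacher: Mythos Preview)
Your proof is correct. The Landau--Kolmogorov inequality you invoke on $[0,1]$ is standard (it follows from Taylor's formula with the optimal step $h$, exactly as you indicate), and the rescaling to $[0,L]$ and the case split are clean. The choice $N \ge 1 + 1/\varrho$ is precisely what makes $\kappa^{N/(N-1)} \ge \kappa^{1+\varrho}$ under $\kappa \le 1$, and you correctly use $F \ge 0$ to identify $\max_{I_{t_0}} F$ with $\mn{F}_{L^\infty}$.

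The paper takes a closely related but more hands-on route: it sets $f = F'$, rescales $g(t) = \kappa^{-1} f(t_0 + t c \kappa^{\varrho})$ so that $|g(0)| = 1$ and $|g| \le 1$ on $[0,1]$, and then uses Taylor's formula at order $N \ge 1/\varrho$ together with the equivalence of norms on polynomials of degree $< N$ to conclude that $g$ is uniformly $C^1$; this forces $|g| \ge 1/2$ on a subinterval of uniform length, and integrating $f$ there gives the bound on $\max F$. In effect the paper \emph{proves} the interpolation inequality in the special case needed, while you package the same Taylor-expansion mechanism as a black-box Landau--Kolmogorov bound applied directly to $F$. Your version is shorter and makes the dependence on $\varrho$ and the $C^\infty$ bounds transparent; the paper's version is more self-contained. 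Your closing remark that the local-minimum hypothesis at $t=0$ is unused is accurate for both arguments: only $F \ge 0$ is needed.
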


\begin{proof} 
Let $f = F'$ then $F(t) = F(0) + \int_0^t f(s)\,ds \ge \int_0^t
f(s)\,ds$ so assuming  the minimum is $F(0)=0$  only improves the estimate. By switching $t$ to $-t$
we may assume $t_0 \le -c{\kappa}^{\varrho} < 0$. Let
\begin{equation}
g(t) = {\kappa}^{-1}f(t_0 + tc{\kappa}^{\varrho}) 
\end{equation}
then $|g(0)| = 1$, $|g(t)| \le 1$ for $0 \le t \le 1$ and 
\begin{equation}
|g^{(N)}(t)| = c^N{\kappa}^{{\varrho}N
-1}|f^{(N)} (t_0 + tc{\kappa}^{\varrho})| \le C_N \qquad 0 \le t \le 1
\end{equation}
when $N \ge 1/{\varrho}$. By using the Taylor
expansion at $t= 0$ for $N \ge 1/{\varrho}$ we find 
\begin{equation}
g(t) = p(t) + r(t)
\end{equation}
where $p$ is the Taylor polynomial of order $N-1$ of $g$ at $0$, and 
\begin{equation}
r(t) = t^N \int_0^1g^{(N)}(ts)(1-s)^{N-1}\, ds/(N-1)! 
\end{equation}
is uniformly bounded in $C^\infty$ for $0 \le t \le 1$ and $ r(0) = 0 $. Since $g$
also is bounded on 
the interval, we find that $p(t)$ is uniformly bounded in $0 \le t \le 1$. Since
all norms on the finite dimensional space of polynomials of fixed
degree are equivalent, we find that $p^{(k)}(0) = g^{(k)}(0)$ are
uniformly bounded for $0 \le 
k < N$ which implies that $g(t)$ is uniformly bounded in $C^\infty$ for $0 \le t \le
1$. Since $|g(0)|= 1$ there exists a uniformly bounded ${\delta}^{-1} \ge
1$ such that  $|g(t)| \ge 1/2$
when $0 \le t \le {\delta}$, thus $g$ has the same sign in that
interval. Since $ g(t) = {\kappa}^{-1}f(t_0 +
tc{\kappa}^{\varrho})$  we find
\begin{equation}
{\delta}/2 \le  \left|\int_0^{\delta} g(s)\,ds \right| = \left|
{\kappa}^{-\varrho}\int_{t_0}^{t_0 +
c\delta {\kappa}^{{\varrho}}} {\kappa}^{-1}f(t)\,dt/c \right| 
\end{equation}
Since $t_0 + c{\delta}{\kappa}^{\varrho} \le 0$ we find that the variation of
$F(t)$ on $[t_0,0]$ is greater than $c{\delta}{\kappa}^{1+{\varrho}}/2$ 
and since $F \ge 0$ we find that the maximum of
$F$ on $I_{t_0}$ is greater than  $c{\delta}{\kappa}^{1+{\varrho}}/2$.
\end{proof}

\section{The proof of Theorem  \ref{mainthm}}\label{pfsect} 

We shall use the following modification
of Lemma 26.4.15 in~\cite{ho:yellow}. Recall that $\mn{u}_{(k)}$ is
the $L^2$ Sobolev norm of order $k$ of $u \in C_0^\infty$ and let
$\Cal D'_{{\Gamma}} = \set{u \in \Cal D': \wf (u) \subset {\Gamma}}$ for
$ \Gamma \subseteq T^*\br^n $.

\begin{lem}\label{estlem}
Let 
\begin{equation}\label{estlem0}
 u_{\lambda}(x) = \exp(i{\lambda}{\omega_\lambda}(x))
 \sum_{j=0}^M 
 {\varphi}_{j,\lambda} (x){\lambda}^{-j{\kappa}} \qquad {\lambda} \ge 1
\end{equation}
with ${\kappa} > 0$, 
${\omega_\lambda} \in C^\infty (\br^n)$ satisfying $\im 
{\omega_\lambda}\ge 0$, $|\partial \re{\omega_\lambda}| \ge c > 0$, and
${\varphi}_{j,\lambda} \in S(1, \lambda^{2 - 2\varrho}|dx|^2) = S(1, g_{1 - \varrho})$, $ \forall \, j\, \lambda$, for some $\varrho > 0 $.
We assume that  ${\omega_\lambda} \to \omega_\infty $ when $ \lambda \to \infty$, and that
${\varphi}_{j,\lambda}$  has  support in a
compact set $\Omega$, $\forall \,  j\, \lambda$.
Then we have
\begin{equation}\label{estlem1}
 \mn{u_{\lambda}}_{(-N)} \le C {\lambda}^{-N} \qquad {\lambda} \ge 1 \qquad \forall \, N 
\end{equation}
If  $\lim_{\lambda \to \infty}{\varphi}_{0,\lambda} (x_0) \ne 0$ and $\im {\omega_\infty}(x_0) = 0$ for some $x_0$ then
there exists $c > 0$ so that
\begin{equation}\label{estlem2}
  \mn{u_{\lambda}}_{(-N)} \ge c
  {\lambda}^{-N-{n}/{2}} \qquad {\lambda} \ge 1
  \qquad \forall\, N
\end{equation}
Let ${\Sigma} =  \lim_{\kappa \to \infty} \overline {\bigcup_{j,\lambda \ge \kappa}  \supp \varphi_{j,\lambda}} \subset \Omega$ and let $ {\Gamma}$ be the cone
generated by 
\begin{equation}\label{estlem3}
 \set{(x,\partial{\omega_\infty}(x)),\ x
   \in {\Sigma}} 
\end{equation}
Then for any $m$ we find ${\lambda}^m u_{\lambda} \to 0$ in $\Cal
D'_{{\Gamma}}$ so ${\lambda}^m Au_{\lambda} \to 0$ in $C^\infty$ if
$A$ is a pseudodifferential operator such that $\wf(A) \cap {\Gamma} =
\emptyset$. The estimates are uniform  if ${\varphi}_{j,\lambda}$ is uniformly bounded in $  S(1, g_{1 - \varrho})$  with fixed compact support $\forall\, j \, \lambda$ and 
${\omega_\lambda} \in C^\infty$ uniformly with fixed lower bound on
$|\partial \re{\omega_\lambda}|$.
\end{lem}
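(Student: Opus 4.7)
The proof rests on the standard Fourier-side analysis
\begin{equation*}
\|u_\lambda\|_{(-N)}^2=\int(1+|\xi|^2)^{-N}|\wh u_\lambda(\xi)|^2\,d\xi,\qquad \wh u_\lambda(\xi)=\sum_{j=0}^M\lambda^{-j\kappa}\int e^{i\Phi_\xi(x)}{\varphi}_{j,\lambda}(x)\,dx,
\end{equation*}
with $\Phi_\xi(x)=\lambda{\omega}_\lambda(x)-\w{x,\xi}$, $\im\Phi_\xi=\lambda\im{\omega}_\lambda\ge 0$ and $\re\nabla\Phi_\xi=\lambda\nabla\re{\omega}_\lambda-\xi$. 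Two ingredients will be used throughout: (i) ${\varphi}_{j,\lambda}\in S(1,g_{1-\varrho})$ gives $|\partial_x^\alpha{\varphi}_{j,\lambda}|\ls\lambda^{(1-\varrho)|\alpha|}$; (ii) compact support and $|e^{i\Phi_\xi}|\le 1$ give $\|u_\lambda\|_{L^2}\le C$ uniformly in $\lambda$.

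For the upper bound \eqref{estlem1} I split the Fourier side into three zones based on $|\nabla\re{\omega}_\lambda|\ge c>0$. In the low-frequency zone $|\xi|\le c\lambda/2$ one has $|\re\nabla\Phi_\xi|\ge c\lambda/2$, so $M$ integrations by parts with $L^t=-i\nabla\cdot(\ol{\nabla\Phi_\xi}/|\nabla\Phi_\xi|^2\,\cdot\,)$ are licit (the bound $\im\Phi_\xi\ge 0$ keeps $|e^{i\Phi_\xi}|\le 1$). Each application of $L^t$ gains a factor $|\nabla\Phi_\xi|^{-1}\sim\lambda^{-1}$ at the cost of a derivative, but $|\partial_x^\beta{\varphi}_{j,\lambda}|\ls\lambda^{(1-\varrho)|\beta|}$ and derivatives of $\ol{\nabla\Phi_\xi}/|\nabla\Phi_\xi|^2$ are $\Cal O(\lambda^{-1})$, so the net gain per step is $\lambda^{-\varrho}$ and $|\wh u_\lambda(\xi)|\ls_M\lambda^{-M\varrho}$. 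In the high-frequency zone $|\xi|\ge C\lambda$ (with $C>\sup|\nabla\re{\omega}_\lambda|$), the same argument with $|\nabla\Phi_\xi|\ge|\xi|/2$ yields $|\wh u_\lambda(\xi)|\ls_M\lambda^{M(1-\varrho)}|\xi|^{-M}$. In the middle zone $|\xi|\sim\lambda$ Plancherel gives $\int_{|\xi|\sim\lambda}|\wh u_\lambda|^2\,d\xi\le\|u_\lambda\|_{L^2}^2\le C$, and $(1+|\xi|^2)^{-N}\sim\lambda^{-2N}$ there forces this zone to contribute $\Cal O(\lambda^{-2N})$ to $\|u_\lambda\|_{(-N)}^2$. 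Choosing $M$ with $M\varrho\ge N+n/2$ makes both outer zones contribute $\Cal O(\lambda^{-2N})$ as well, and \eqref{estlem1} follows.

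For the lower bound \eqref{estlem2} I pair $u_\lambda$ with the test function $v_\lambda(x)=\chi((x-x_0)/\delta)e^{-i\lambda\ol{{\omega}_\lambda(x)}}$, where $\chi\in C_0^\infty$ equals $1$ near $0$ and $\delta>0$ is small and fixed. Since ${\omega}_\lambda-\ol{{\omega}_\lambda}=2i\im{\omega}_\lambda$,
\begin{equation*}
\w{u_\lambda,v_\lambda}=\int e^{-2\lambda\im{\omega}_\lambda(x)}{\varphi}_{0,\lambda}(x)\ol{\chi((x-x_0)/\delta)}\,dx+\Cal O(\lambda^{-\kappa}).
\end{equation*}
The hypotheses $\im{\omega}_\infty(x_0)=0$, $\im{\omega}_\lambda\ge 0$ (so $x_0$ is a near-minimum), $\lim{\varphi}_{0,\lambda}(x_0)\ne 0$ and the uniform $C^\infty$ bounds on ${\omega}_\lambda$ yield a Gaussian lower bound $e^{-2\lambda\im{\omega}_\lambda(x)}\gs e^{-C\lambda|x-x_0|^2}$ on the support of $\chi((x-x_0)/\delta)$, and hence $|\w{u_\lambda,v_\lambda}|\gs\lambda^{-n/2}$. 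On the other hand, differentiating $v_\lambda$ at most $N$ times pulls out at worst the factor $\lambda^N$ from $\nabla\ol{{\omega}_\lambda}$, so $\|v_\lambda\|_{(N)}\ls\lambda^N$. Duality $|\w{u_\lambda,v_\lambda}|\le\|u_\lambda\|_{(-N)}\|v_\lambda\|_{(N)}$ then delivers $\|u_\lambda\|_{(-N)}\ge c\lambda^{-N-n/2}$.

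For the wavefront-set statement, applying $A$ to $u_\lambda$ and invoking the expansion \eqref{trevesexp} produces a finite sum whose amplitude factors are almost-analytic extensions of $\partial_\xi^\alpha a$ evaluated at $(x,\lambda\nabla{\omega}_\lambda(x))$, times controlled factors involving ${\varphi}_{j,\lambda}$ and polynomial powers of $\lambda$. For $x\in\Sigma$ and $\lambda$ large, the point $(x,\lambda\nabla{\omega}_\lambda(x))$ lies in any preassigned conic neighborhood of $\Gamma$, where by $\wf(A)\cap\Gamma=\emptyset$ the symbol $a$ is $\Cal O(\w\xi^{-\infty})$; each term is therefore $\Cal O(\lambda^{-\infty})$ uniformly in $x$, and the argument survives any number of $x$-derivatives, giving $\|Au_\lambda\|_{C^k}=\Cal O(\lambda^{-\infty})$ for every $k$. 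This is precisely the meaning of $\lambda^m u_\lambda\to 0$ in $\Cal D'_\Gamma$. The main obstacle I anticipate is the clean bookkeeping of the $\lambda^{-\varrho}$ gain per integration by parts in the upper bound: because $|\partial_x^\beta{\varphi}_{j,\lambda}|\ls\lambda^{(1-\varrho)|\beta|}$ while $|\partial^\gamma\nabla\Phi_\xi|\ls\lambda$, the net gain is exactly $\lambda^{-\varrho}$ and no better, so one must verify inductively that this gain is preserved in every term of $(L^t)^M{\varphi}_{j,\lambda}$ (and across the partition between the three zones).
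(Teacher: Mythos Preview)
Your upper bound argument is correct and in fact tracks the $\lambda^{-\varrho}$ gain more carefully than the paper, which simply splits according to whether $\xi/\lambda$ lies in a neighborhood of the directions appearing in~\eqref{estlem3} and invokes integration by parts together with the uniform $L^2$ bound. Your wavefront-set argument via the symbolic expansion~\eqref{trevesexp} is a legitimate alternative to the paper's Fourier-transform computation of $\widehat{\chi u_\lambda}$.

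There is, however, a genuine gap in your lower bound. You pair $u_\lambda$ with $v_\lambda(x)=\chi((x-x_0)/\delta)e^{-i\lambda\overline{\omega_\lambda(x)}}$ for \emph{fixed} $\delta>0$ and claim $|\langle u_\lambda,v_\lambda\rangle|\gtrsim\lambda^{-n/2}$ from the Gaussian bound $e^{-2\lambda\im\omega_\lambda(x)}\gtrsim e^{-C\lambda|x-x_0|^2}$. But $\varphi_{0,\lambda}\in S(1,g_{1-\varrho})$ only says $|\partial^\alpha\varphi_{0,\lambda}|\lesssim\lambda^{(1-\varrho)|\alpha|}$, so $\varphi_{0,\lambda}$ can vary by $O(1)$ over distances $\lambda^{-(1-\varrho)}$. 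Since the lemma is applied with $\varrho$ small (in particular $\varrho<1/2$), the Gaussian localizes on the coarser scale $\lambda^{-1/2}\gg\lambda^{-(1-\varrho)}$, and on that scale $\varphi_{0,\lambda}$ is \emph{not} approximately constant: for instance $\varphi_{0,\lambda}$ could be supported in $|x-x_0|\lesssim\lambda^{-(1-\varrho)}$, giving $\int e^{-2\lambda\im\omega_\lambda}\varphi_{0,\lambda}\chi\,dx=O(\lambda^{-n(1-\varrho)})\ll\lambda^{-n/2}$, or could oscillate like $\cos(\lambda^{1-\varrho}(x_1-x_{0,1}))$ and produce exponential cancellation against the Gaussian.

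The fix is to pair with a test function localized on the finer scale $\lambda^{-1}$, as the paper does: take $\psi\in C_0^\infty$ and compute $\lambda^n e^{-i\lambda\re\omega_\lambda(x_0)}\langle u_\lambda,\psi(\lambda(\cdot-x_0))\rangle$. After rescaling, $\varphi_{0,\lambda}(x_0+y/\lambda)=\varphi_{0,\lambda}(x_0)+O(\lambda^{-\varrho})$ \emph{is} approximately constant, and $\lambda\big(\omega_\lambda(x_0+y/\lambda)-\re\omega_\lambda(x_0)\big)\to\langle\partial\re\omega_\infty(x_0),y\rangle$, so the pairing converges to a nonzero limit for suitable $\psi$. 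Since $\|\psi(\lambda(\cdot-x_0))\|_{(N)}\lesssim\lambda^{N-n/2}$, duality gives \eqref{estlem2}.
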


Observe that by Propositions~\ref{eikprop0} and~\ref{eikprop00} the phase functions $ {\omega_\lambda}$ in~\eqref{omegaexp} or~\eqref{omegaexp01} satisfy the
conditions in Lemma~\ref{estlem} near
$\set{(t,x_0(t), y_0(t)): \ t \in I'}$  
since ${\xi}_0(t) \ne 0$ and $\im {\omega_\lambda}(t,x) \ge 0$. Also, 
the functions ${\phi}_j$ in the expansion~\eqref{udef}
satisfy the conditions in Lemma~\ref{estlem} uniformly in $\lambda $ by Proposition~\ref{transprop}.  
Then  $\Sigma = \set {(t,x_0(t),y_0(t)): t \in I'} $ and
the cone $ {\Gamma}$ is generated by  
\begin{equation}
\set{(t,x_0(t),y_0(t),0,{\xi}_0(t), 0): \ t \in I'}
\end{equation}
In fact, in both the expansions \eqref{omegaexp} and~\eqref{omegaexp01} we have that $\partial{\omega_\infty(t,x,y)} = (0,\xi_0(t) + \sigma_0(t,x), 0)$, and we find by Proposition~\ref{transprop} that the supports of $ \phi_j$ in~\eqref{udef} shrink to the curve $\set{(t,x_0(t),y_0(t)): \ t \in I'}$  as $ \lambda \to \infty$ for any $ j$. 

\begin{proof}[Proof of Lemma \ref{estlem}]
We shall modify the proof of~\cite[Lemma 26.4.15]{ho:yellow} to this case. 
We have that 
\begin{equation}
 \hat u_{\lambda}({\xi}) = \sum_{j=0}^{M}
 {\lambda}^{-j \kappa} \int
 e^{i{\lambda}{\omega_\lambda} (x)
 - i\w{x,{\xi}}} {\varphi}_{j,\lambda}(x)\,dx 
\end{equation}
Let $U$ be a neighborhood of the projection on the second component of the set
in~\eqref{estlem3}. When ${\xi}/{\lambda} \notin U $  for
${\lambda} \gg 1 $ we find that
$$
\overline {\bigcup_j \supp \varphi_{j,\lambda}} \ni  x \mapsto
({\lambda}{\omega_\lambda}(x) - \w{x,{\xi}})/({\lambda} + |{\xi}|)
$$
is in 
a compact set of functions with nonnegative imaginary part with a fixed
lower bound on the gradient of the real part. Thus, by integrating by
parts we find for any positive integer $k$ that   
\begin{equation}\label{pfest}
 |\hat u_{\lambda}({\xi})| \le C_k({\lambda}
 +|{\xi}|)^{-k}\qquad 
 {\xi}/{\lambda} \notin U \qquad {\lambda} \gg 1
\end{equation}
which gives any negative power of ${\lambda}$ for $k$ large enough. If $V$ is bounded and $0 \notin \ol V$ then since $u_{\lambda}$ is
uniformly bounded in $L^2$ we find
\begin{equation}
 \int_{{\lambda}\, V}  |\hat u_{\lambda}({\xi})|^2 (1 +
 |{\xi}|^2)^{-N}\,d{\xi} \le C_V{\lambda}^{-2N}
\end{equation}
which together with~\eqref{pfest} gives~\eqref{estlem1}. If ${\chi}
\in C_0^\infty$ then we may apply~\eqref{pfest} to
${\chi}u_{\lambda}$, thus we find for any
positive integer $k$ that
\begin{equation}
  |\widehat {{\chi}u}_{\lambda}({\xi})| \le
  C ({\lambda}+
  |{\xi}|)^{-k} \qquad  {\xi} \in W \qquad {\lambda} \gg 1
\end{equation}
if $W$ is any closed cone with $(\supp
{\chi}\times W) \bigcap  {\Gamma}  = \emptyset$. Thus we find that
${\lambda}^m u_{\lambda} \to 0$ in $\Cal D'_{{\Gamma}}$ for every $m$.
To prove \eqref{estlem2} we may assume that $x_0 = 0$ and take ${\psi}\in
C_0^\infty$. If $\im {\omega_\infty}(0) = 0$ and $\lim_{\lambda \to \infty}{\varphi}_{0,\lambda} (0) \ne 0$ then since ${\varphi}_{j,\lambda}(x/{\lambda}) = {\varphi}_{j,\lambda}(0) + \Cal O(\lambda^{-\varrho}) $  in $\supp  \psi $ $\forall \, j $ we find that
\begin{multline}\label{limit}
 {\lambda}^{n} e^{- i \lambda \re \omega_\lambda(0)}\w{u_{\lambda}, {\psi}({\lambda}\cdot)} 
  = \int
 e^{i{\lambda}(\omega_\lambda(x/{\lambda}) - \re \omega_\lambda(0))}{\psi}(x)
 \sum_{j}{\varphi}_{j,\lambda}(x/{\lambda})
 {\lambda}^{-j \kappa }\,dx \\ \to \int 
 e^{i\w{\re \partial_x{\omega_\infty}(0),x}}{\psi}(x)
 {\varphi}_{0,\infty}(0)\,dx \qquad {\lambda} \to \infty
\end{multline}
which is not equal to zero for some suitable ${\psi}  \in
C^\infty_0$. Since 
\begin{equation}
 \mn{{\psi}({\lambda}\, \cdot)}_{(N)} \le C_N {\lambda}^{N-n/2}
\end{equation}
we obtain from~\eqref{limit} that $0 < c \le  {\lambda}^{N + n/2}
\mn{u_{\lambda}}_{(-N)}$ which gives~\eqref{estlem2} and the lemma. 
\end{proof}

\begin{proof}[Proof of Theorem~\ref{mainthm}] 
By conjugating with elliptic Fourier integral operators and
multiplying with pseudodifferential operators, we obtain 
that $P^* \in {\Psi}^{2}_{\mathrm{cl}}$ is of the form given by Proposition~\ref{prepprop}
microlocally near~${\Gamma} = \set{(t,x_0,y_0,0,{\xi}_0,0):\ t \in I}$. Thus we may assume
\begin{equation} 
P^* = D_t + F(t,x,y, D_x,D_y) + R
\end{equation}
where  $R\in
{\Psi}^2_{\mathrm{cl}}$ satisfies $\wf_{g_k} (R) \bigcap
{\Gamma}\times \set {\eta_0} = \emptyset$ when $\kappa < \infty $, vanishes of infinite order at $ \st $ if $ \kappa =  \infty$, and the form of the symbol of $ F$ depends on whether $k < \infty $ or  $k = \infty $.

Then we can construct approximate solutions $u_{\lambda}$ to $P^* u_\lambda = 0 $ of the
form~\eqref{udef} for  $ \lambda \to \infty$ by using the 
expansion~\eqref{exp0}. The phase function $\omega_\lambda $ is given by \eqref{omegaexp} in the case when $k < \infty $ and $\eta_0 \ne 0 $ or by \eqref{omegaexp01} in the case when  $\eta_0 = 0 $.

First we solve the eikonal equation~\eqref{eikeq}  modulo $ \Cal O\big(\lambda (|x-x_0(t)|+ |y - y_0(t)|)^M\big)$  for any $M $ by using
Propositions~\ref{eikprop0}  when $k < \infty $ and  $\eta_0 \ne 0 $ or Proposition~\ref{eikprop00} when  $\eta_0 = 0 $. By using Proposition~\ref{transprop} we can solve the transport
equations so that the expansion \eqref{exp0} is $ \Cal O\big(\lambda (|x-x_0(t)|+ |y - y_0(t)|)^M + \lambda^{-N}\big)$  for any $M $ and $ N$ and  $\phi_0(t_0,x_0(t_0),y_0(t_0)) = 1$ for some $t \in I' $.
Because of the phase functions \eqref{11} or \eqref{11a} this gives approximate solutions
$u_{\lambda}$ of the  
form~\eqref{estlem0} in Lemma~\ref{estlem}.  In fact, for any $ N$ we may choose $M$ in
Proposition~\ref{transprop} so that 
$|(D_t + F)u_{\lambda}| \ls {\lambda}^{-N}$.
Now differentiation of $(D_t + F)u_{\lambda}$ can at most give a
factor~${\lambda}$. In fact, differentiating the exponential gives a factor $\lambda $ and  differentiating the amplitude gives either a factor $\lambda^{1 - \varrho} $, or a loss of 
a factor $x-x_0(t)$ or  $y - y_0(t)$ in the expansion, which gives at most a factor ${\lambda}^{{1}/{2} - \varrho}$. 
Because of the bounds on the
support of $u_{\lambda}$ we obtain that 
\begin{equation} \label{8.13}
\mn{(D_t + F){u_{\lambda}}}_{({\nu})} = \Cal
O({\lambda}^{-N-n})
\end{equation} 
for any chosen ${\nu}$. 
Since Propositions  \ref{eikprop0}, \ref{eikprop00} and~\ref{transprop} gives $t_0 $ so that ${\phi}_0(t_0,x_0(t_0),y_0(t_0)) = 1$ 
and $\im \omega_\lambda(t_0,x_0(t_0),y_0(t)) =
0$ when $\lambda \gg 1$, we find
by~\eqref{estlem1} and~\eqref{estlem2} that
\begin{equation}\label{lastest}
{\lambda}^{-N - {n}/{2}} \ls \mn{u}_{(-N)}  
\ls  {\lambda}^{-N} \qquad \forall\, N \qquad {\lambda} \gg 1
\end{equation}  
Since $u_{\lambda}$ has support in a fixed
compact set, 
we find from Remark~\ref{Rrem} and Lemma~\ref{estlem} that $\mn{Ru}_{({\nu})}$ and
$\mn{Au}_{(0)}$ are $\Cal O({\lambda}^{-N-n})$ if $\wf(A)$ does not
intersect ${\Gamma}$. Thus we find from~\eqref{8.13}
and~\eqref{lastest} that~\eqref{solvest} does 
not hold when ${\lambda} \to \infty$, so $P$ is not
solvable at~${\Gamma}$ by Remark~\ref{solvrem}. 
\end{proof}

\bibliographystyle{plain}

%\bibliography{nec}

\end{document}